\tikzset{
  ncone/.pic={
	\draw (0,0)--(0,0.2);
  }
}
\tikzset{
  nctwo/.pic={
    \draw (0,0)--(0,0.2);
	\draw (0.1,0)--(0.1,0.2);
  }
}
\tikzset{
  nctwoW/.pic={
    \draw (0,0.2)--(0,0)--(0.1,0)--(0.1,0.2);
  }
}
\tikzset{
  nctwoWW/.pic={
    \draw (0,0.2)--(0,0)--(0.2,0)--(0.2,0.2);
  }
}
\tikzset{
  ncthreeWW/.pic={
    \draw (0,0.2)--(0,0)--(0.3,0)--(0.3,0.2);
	\draw (0.2,0)--(0.2,0.2);
  }
}
\tikzset{
  ncthree/.pic={
    \draw (0,0)--(0,0.2);
	\draw (0.1,0)--(0.1,0.2);
	\draw (0.2,0)--(0.2,0.2);
  }
}
\tikzset{
  ncthreeW/.pic={
    \draw (0,0.2)--(0,0)--(0.2,0)--(0.2,0.2);
	\draw (0.1,0)--(0.1,0.2);
  }
}
\tikzset{
  ncfour/.pic={
    \draw (0,0)--(0,0.2);
	\draw (0.1,0)--(0.1,0.2);
	\draw (0.2,0)--(0.2,0.2);
	\draw (0.3,0)--(0.3,0.2);
  }
}
\tikzset{
  ncfive/.pic={
    \draw (0,0)--(0,0.2);
	\draw (0.1,0)--(0.1,0.2);
	\draw (0.2,0)--(0.2,0.2);
	\draw (0.3,0)--(0.3,0.2);
	\draw (0.4,0)--(0.4,0.2);
  }
}
\tikzset{
  ncfourW/.pic={
    \draw (0,0.2)--(0,0)--(0.3,0)--(0.3,0.2);
	\draw (0.1,0)--(0.1,0.2);
	\draw (0.2,0)--(0.2,0.2);
  }
}
\tikzset{
  ncfiveW/.pic={
    \draw (0,0.2)--(0,0)--(0.4,0)--(0.4,0.2);
	\draw (0.1,0)--(0.1,0.2);
	\draw (0.2,0)--(0.2,0.2);
	\draw (0.3,0)--(0.3,0.2);
  }
}
\tikzset{
  nconeinsidetwoWW/.pic={
    \path (0,0) pic {nctwoWW}; \path (0.1,0.1) pic {ncone};
  }
}
\tikzset{
  nconeinsidethreeWW/.pic={
    \path (0,0) pic {ncthreeWW}; \path (0.1,0.1) pic {ncone};
  }
}
\tikzset{
  nconeinsidethreerightWW/.pic={
    \draw (0,0.2)--(0,0)--(0.3,0)--(0.3,0.2);
    \draw (0.1,0)--(0.1,0.2); \draw (0.2,0.1)--(0.2,0.3);
  }
}
\tikzset{
  nconeinsidethreeleftWW/.pic={
    \draw (0,0.2)--(0,0)--(0.3,0)--(0.3,0.2);
    \draw (0.2,0)--(0.2,0.2); \draw (0.1,0.1)--(0.1,0.3);
  }
}
\tikzset{
  nctwoWWW/.pic={
    \draw (0,0.2)--(0,0)--(0.3,0)--(0.3,0.2);
  }
}
\tikzset{
  nconeoneinsidetwoWW/.pic={
	\path (0,0) pic {ncone};
    \path (0.1,0) pic {nctwoWW}; 
	\path (0.2,0.1) pic {ncone};
  }
}
\newtheorem{theorem}{Theorem}[section]
\newtheorem{corollary}{Corollary}[theorem]
\newtheorem{lemma}[theorem]{Lemma}
\newtheorem{proposition}[theorem]{Proposition}
\newtheorem{definition}[theorem]{Definition}
\newtheorem{example}[theorem]{Example}
\newtheorem{remark}{Remark}
\DeclareMathOperator{\graft}{\triangleright}
\DeclareMathOperator{\dgraft}{\widehat{\triangleright}}
\DeclareMathOperator{\dgraftt}{\widehat{\blacktriangleright}}
\DeclareMathOperator{\dinsert}{\widehat{\diamond}}
\title{Planar Regularity Structures}
\author{Ludwig Rahm\footnote{Department of Mathematical Sciences, Norwegian University of Science and Technology (NTNU), 7491 Trondheim, Norway. \texttt{ludwig.rahm@ntnu.no}.}}
\begin{document}
\maketitle

\begin{abstract}
Branched rough paths, used to solve ODEs on $\mathbb{R}$, have been generalised in two different directions. In one direction, there are regularity structures aimed at solving SPDEs on $\mathbb{R}$. In the other direction, there are planarly branched rough paths to solve ODEs on homogeneous spaces. This paper combines these two directions to construct planar regularity structures, for (S)PDEs on homogeneous spaces.
\end{abstract}

\section{Introduction}
\label{sect:intro}

Geometric and branched rough paths \cite{Gubinelli2010, Lyons1998} were introduced to solve so-called rough differential equations
\begin{align*}
y_t=f(y)dX,
\end{align*}
characterised by being driven by some noise. The unknown $y$ is a map $\mathbb{R} \to \mathbb{R}$. This has been generalised in two different directions. In one direction, rough paths generalise to regularity structures \cite{Hairer2013} to solve for maps $y: \mathbb{R}^d \to \mathbb{R}$. In the other direction, rough paths generalise to planarly branched rough paths \cite{CurryEbrahimi-FardManchonMunthe-Kaas2018} to solve for maps $\mathbb{R} \to M$, where the target space $M$ is a homogeneous space. Considering these two results, there should reasonably be a way to generalise both the initial space and the target space at the same time. Indeed, the ultimate goal of this paper is to construct such a generalisation, which we call \textit{planar regularity structures}.

\[\begin{tikzcd}
 \text{Branched Rough Paths} \atop \mathbb{R}\to \mathbb{R}  \arrow{r} \arrow{d} & \text{Planarly Branched Rough Paths} \atop \mathbb{R} \to M \arrow{d} \\
 \text{Regularity Structures}\atop \mathbb{R}^d \to \mathbb{R}  \arrow{r} \arrow[dotted]{d} & \text{Planar Regularity Structures} \atop \mathbb{R}^d \to M \arrow[dotted]{d} \\
\text{Manifold to reals} \atop M \to \mathbb{R} \arrow[dotted]{r}& \text{Manifold to manifold} \atop M \to N 
\end{tikzcd}
\]

Rough paths, as well as regularity structures, have been developed and studied through the formalism of Hopf algebras on rooted trees. Indeed, one can understand the generalisation from branched rough paths to planarly branched rough paths as a Hopf algebra morphism. Starting from the free post-Lie algebra structure on planar rooted trees, and applying the post-Lie version of the Guin--Oudom procedure \cite{EbrahimiFardLundervoldMuntheKaas2014,GuinOudom2008}, one obtains the planar Grossman--Larson product. Dualising this product gives the Munthe-Kaas--Wright Hopf algebra $\mathcal{H}_{MKW}$. A planarly branched rough paths can be seen as a map from $\mathbb{R}^2$ taking values in the character group of the Hopf algebra $\mathcal{H}_{MKW}$. One can then find the Butcher--Connes--Kreimer Hopf algebra $\mathcal{H}_{BCK}$ as a Hopf subalgebra of $\mathcal{H}_{MKW}$. A branched rough path on the other hand can be seen as a map on $\mathbb{R}^2$ that takes values in the character group of $\mathcal{H}_{BCK}$. The Butcher--Connes--Kreimer Hopf algebra can furthermore be obtained as the dual of the non-planar Grossman--Larson product, which one gets from applying the Guin--Oudom procedure to the free pre-Lie algebra structure on non-planar rooted trees. \\
To get to the algebraic structures used in regularity structures, one again starts with the free pre-Lie algebra structure on non-planar rooted trees. This structure is then altered to obtain the so-called \textit{deformed pre-Lie product}. The vector space of non-planar rooted trees is then augmented to furthermore contain the unit vectors in $\mathbb{N}^d$ in its basis. Extending the deformed pre-Lie product, and defining a Lie bracket, turns this larger space into a post-Lie algebra. One can then apply the post-Lie version of the Guin--Oudom procedure to obtain a deformed Grossman--Larson product. One of the relevant structures for regularity structures is the coaction obtained from dualising this deformed Grossman--Larson product \cite{BrunedKatsetsiadis2022}. This is the coaction for re-centering, also called positive renormalisation. The other relevant structure is the coaction for renormalisation, also called negative renormalisation. It was shown in \cite{BrunedManchon2022} that one can use the deformed Grossman--Larson product to define a new pre-Lie product on the non-planar rooted trees. Applying the Guin--Oudom procedure to this new pre-Lie structure yields a Grossman--Larson-like product, which is dual to the coaction for renormalisation.\\
The generalisation from Branched rough paths to regularity structures was studied in \cite{BrunedChevyrevFrizPreiss2017}, where the authors established a bijection between the set of branched rough paths, and a subset of all possible regularity structures. In this sense, one can see branched rough paths as a special case of regularity structures. In this bijection, re-centering corresponds to Connes--Kreimer coproduct on the Butcher--Connes--Kreimer Hopf algebra. The authors furthermore introduced the concept of \textit{translations of rough paths}, which corresponds to renormalisation of regularity structures. The concept of translation was extended to planarly branched rough paths in \cite{Rahm2021RP}.\\

This paper aims to introduce the algebraic structures for planar regularity structures, which generalise both the structures from planarly branched rough paths, and those from regularity structures, at the same time. We will accomplish this by following the procedures used in \cite{BrunedManchon2022} and \cite{BrunedKatsetsiadis2022}, where the authors obtained the algebraic structures for regularity structures by starting with the free pre-Lie algebra structure on non-planar rooted trees. However, we will instead start with the free post-Lie algebra structure on planar rooted trees, and perform the corresponding procedures. We will furthermore show how planarly branched rough paths can be seen as regularity structures.

\smallskip 

The paper is organised as follows. In Section \ref{sect:prelim}, we summarize the definitions and results that the present paper builds upon. In Section \ref{sec::PlanarRoughPathsAreRegularityStructures}, we show how to obtain a regularity structure from a planarly branched rough path. In Section \ref{sec::PlanarRegularityStructures}, we construct the algebraic structures of planar regularity structures. First we construct a re-centering coaction, followed by a renormalisation coaction. Finally we show an important cointeraction property between the coactions.

\medskip 

{\bf{Acknowledgements}}: The author is supported by the Research Council of Norway through project 302831 ''Computational Dynamics and Stochastics on Manifolds" (CODYSMA). \\
This work grew out of discussions with Dominique Manchon during the author's visit to Université Clermont-Auvergne in spring 2022. The author thanks the university for the visit, and thanks Dominique for the helpful discussions both during and after the visit. The author thanks Kurusch Ebrahimi-Fard for helpful discussions.

\section{Preliminaries}
\label{sect:prelim}

We review the theory of rough paths, regularity structures and planarly branched rough paths. We furthermore outline how rough paths can be considered a special case of both regularity structures as well as planarly branched rough paths. We will in later sections combine these two different generalisations of rough paths into planar regularity structures, which will generalise both regularity structures and planarly branched rough paths.

\subsection{Hopf algebras on rooted trees, planar and non-planar}
\label{ssect:trees}

Smooth ordinary differential equations (ODEs) on $\mathbb{R}$ have been studied using Butcher's $B$-series methods \cite{HLW2006}, which can be described in terms of characters on the Butcher--Connes--Kreimer (BCK) Hopf algebra over undecorated non-planar rooted trees \cite{CK1998}. In the generalisation from smooth ODEs to rough differential equations, by means of so-called branched rough paths \cite{Gubinelli2010}, characters in the BCK Hopf algebra over non-planar rooted trees with decorated vertices are considered. Further generalising to rough partial differential equations (PDEs), using the notion of regularity structures \cite{Hairer2013}, characters from a BCK-like Hopf algebra over non-planar rooted trees with decorated vertices and decorated edges appear. There is a similar story for differential equations with a homogeneous space as the target space. Smooth ODEs on a homogeneous space have been studied using so-called Lie--Butcher series (LB-series), a blend of Lie- and Butcher's $B$-series, which can be described in terms of characters on the Munthe-Kaas--Wright (MKW) Hopf algebra over undecorated planar rooted trees \cite{MuntheKaasWright2008}. The MKW Hopf algebra is a generalisation of the BCK Hopf algebra to planar rooted trees. Further generalising to rough ODEs results in the notion of planarly branched rough paths \cite{CurryEbrahimi-FardManchonMunthe-Kaas2018}, which is based on characters over the MKW Hopf algebra over planar rooted trees with decorated vertices. A central goal of this paper is to look at planarly branched rough paths from the viewpoint of regularity structures by constructing a MKW-like Hopf algebra over planar rooted trees with both decorated vertices and decorated edges. 

The purpose of this section is to describe the above mentioned Hopf algebras over planar and non-planar rooted trees. We will begin with the MKW Hopf algebra over planar rooted trees with decorated vertices, and then see how Hopf algebras of non-planar rooted trees can be described as subalgebras.

Consider the set of planar rooted trees
\begin{align*}
	\mathbf{PT}=\Big\{ \Forest{[]}, \scalebox{0.8}{\Forest{[[]]}}, \scalebox{0.8}{\Forest{[[][]]}}, \scalebox{0.8}{\Forest{[[[]]]}}, \scalebox{0.8}{\Forest{[[][][]]}}, \scalebox{0.8}{\Forest{[[[]][]]}}, \scalebox{0.8}{\Forest{[[][[]]]}}, \scalebox{0.8}{\Forest{[[[[]]]]}},\ldots \Big\},
\end{align*}
and the vector space $\mathcal{PT}=\langle \mathbf{PT} \rangle$. A tree $\tau \in \mathbf{PT}$ has a set of vertices, $V(\tau)$, and a set of edges, $E(\tau)$. With the exception of a particular vertex, every $v \in V(\tau)$ has exactly one incoming and arbitrarily many outgoing edges. The special vertex, called the root, is drawn at the bottom of the tree and has no incoming edge. Vertices with no outgoing edges are the leaves of a tree. Elements in $\mathbf{PT}$ are furthermore endowed with an embedding in the plane. This means, for example, that the two following trees are considered to be different
\begin{align*}
	\scalebox{0.8}{\Forest{[[[]][]]}}\quad \text{and} \quad \scalebox{0.8}{\Forest{[[][[]]]}}.
\end{align*}

We say that trees in $\mathbf{PT}$ are decorated by the set $\mathcal{C}$, if for each tree there exists a map from the set of its vertices to $\mathcal{C}$. Decorations are denoted next to the vertices. Similarly, the set of edges can be decorated as well. For instance, the following planar rooted tree has both decorated vertices and decorated edges:
\begin{align*}
	\Forest{[a[b,edge label = {node[midway,fill=white,scale=0.5]{$\alpha$}}]
	[c,edge label = {node[midway,fill=white,scale=0.5]{$\beta$}}
	[d,edge label = {node[midway,fill=white,scale=0.5]{$\gamma$}}]]]}.
\end{align*}

Let $\mathbf{PT}_{\mathcal{C}}$ denote the set of planar rooted trees whose vertices are decorated by $\mathcal{C}$, and let $(Lie(\mathcal{PT}_{\mathcal{C}}),[\cdot,\cdot])$ be the free Lie algebra generated by $\mathbf{PT}_{\mathcal{C}}$. We can endow $\mathcal{PT}_{\mathcal{C}}$ with the left grafting product $\graft$, where $\tau_1 \graft \tau_2$ is the sum over all possible trees obtained by adding to any vertex in the planar embedding of $\tau_2$ a new leftmost edge to the root of $\tau_1$. For example:
\begin{align*}
	\scalebox{0.8}{\Forest{[a[b]]}}\graft \scalebox{0.8}{\Forest{[c[d][e]]}}
	=\scalebox{0.8}{\Forest{[c[a[b]][d][e]]}}
	+\scalebox{0.8}{\Forest{[c[d[a[b]]][e]]}}
	+\scalebox{0.8}{\Forest{[c[d][e[a[b]]]]}}.
\end{align*}
Extending the left grafting product $\graft$ to $Lie(\mathcal{PT}_{\mathcal{C}})$ by the rules:
\begin{align}
	[x,y]\graft z 
		&= x \graft (y \graft z) - (x \graft y) \graft z - y \graft (x \graft z) + (y \graft x) \graft z \label{eq::PostLieAssociator},\\
	x \graft [y,z]
		&=[x \graft y,z] + [y,x \graft z], \label{eq::PostLieDerivation}
\end{align}
turns $(Lie(\mathcal{PT}_{\mathcal{C}}),\graft,[\cdot,\cdot])$ into the free post-Lie algebra generated by the set $\mathcal{C}$. The rules \eqref{eq::PostLieAssociator} and \eqref{eq::PostLieDerivation} define the two axioms characterising the notion of post-Lie algebra \cite{Foissy2018}\cite{AlKaabiEbrahimiFardManchonMuntheKaas2022}.

Consider now the free non-commutative associative algebra $(\mathcal{OF}_{\mathcal{C}},\cdot)$ generated by $\mathcal{PT}_{\mathcal{C}}$. Its associative product is the concatenation of trees giving \textit{ordered forests}:
\begin{align*}
	\mathcal{OF}= \Big\langle \Big\{1,\Forest{[]},\Forest{[[]]},\Forest{[]}\Forest{[]},\Forest{[[][]]},\Forest{[[[]]]},\Forest{[[]]}\Forest{[]},\Forest{[]}\Forest{[[]]},\Forest{[]}\Forest{[]}\Forest{[]},\dots  \Big\}  \Big\rangle.
\end{align*}
Note that this is the universal enveloping algebra of the Lie algebra $(Lie(\mathcal{PT}_{\mathcal{C}}),[\cdot,\cdot])$. 
The left grafting product $\graft$ can be extended to $\mathcal{OF}_{\mathcal{C}}$ by the post-Lie version of the Guin--Oudom construction \cite{GuinOudom2008, EbrahimiFardLundervoldMuntheKaas2014}:
\begin{align}
	1 \graft \omega 
		&= \omega, \label{eq::GuinOudom1} \\
	\omega \graft 1 
		&= 0, \label{eq::GuinOudom2} \\
	(\tau_1 \omega) \graft \tau_2 
		&= \tau_1 \graft (\omega \graft \tau_2)-(\tau_1 \graft \omega)\graft \tau_2, \label{eq::GuinOudom3} \\
	\omega \graft (\omega_1 \omega_2) 
		&= (\omega_{(1)}\graft \omega_1)(\omega_{(2)} \graft \omega_2), \label{eq::GuinOudom4}
\end{align}
for $\tau_1,\tau_2 \in Lie(\mathcal{PT}_{\mathcal{C}})$ and  $\omega,\omega_1,\omega_2 \in \mathcal{OF}_{\mathcal{C}}$.  We use Sweedler's notation to denote the deshuffle coproduct 
\begin{align*}
	\Delta_{\shuffle}(\omega) = \omega_{(1)} \otimes \omega_{(2)}.
\end{align*}
This construction gives the free D-algebra $(\mathcal{OF}_{\mathcal{C}},\cdot,\graft)$ \cite{MuntheKaasWright2008}. Recall that a D-algebra $(A,\cdot,\graft)$ consists of a unital associative algebra $(A,\cdot)$ together with a second product $\graft$. Letting $D(A)$ denote the set $\{x \in A : x \graft (ab)=(x\graft a)b+a(x \graft b), \forall a,b \in A  \}$, the D-algebra axioms are given by:
	\begin{align*}
		1 \graft a =& a, \\
		a \graft x \in & D(A), \\
		x \graft (a \graft b)=&(x \cdot a) \graft b + (x \graft a) \graft b,
	\end{align*}
for $x \in D(A)$ and $a,b \in A$. 

Following \cite{GuinOudom2008, EbrahimiFardLundervoldMuntheKaas2014}, we now define a second associative product $\ast$ on $\mathcal{OF}_{\mathcal{C}}$, known as planar Grossman--Larson product:
\begin{align}
	\omega \ast \omega'=\omega_{(1)}(\omega_{(2)} \graft \omega'). \label{eq::GrossmanLarson}
\end{align}
Then $(\mathcal{OF}_{\mathcal{C}},\ast,\Delta_{\shuffle})$ is the dual Hopf algebra to the Munthe-Kaas--Wright (MKW) Hopf algebra $\mathcal{H}_{MKW}^{\mathcal{C}}$, which we will now describe. There exists a bijection between ordered forests and planar rooted trees, denoted $B_+$, which grafts all of the trees in the forest onto a common (undecorated) root. The planar embedding of the tree is given by the non-commutative order of the forest. We denote the inverse map by $B_-$. For example:
\begin{align*}
	B_+\Big(\Forest{[[]]}\Forest{[]}\Forest{[[[]][]]}\Big)
	&= \Forest{[[[]][][[[]][]]]},\\
	B_-\Big(\Forest{[[][[][]]]}\Big)
	&= \Forest{[]}\Forest{[[][]]}.
\end{align*}
Let $\tau$ be a planar forest and let $c$ be a subset of its edges. We say that $c$ is a \textit{left admissible cut} if:
\begin{itemize}
\item Any path from the root of $\tau$ to a leaf, contains at most one edge in $c$.
\item If $e \in c$, then every other outgoing edge of the vertex that is to the left of $e$ in the planar embedding is also in $c$.
\end{itemize}
For $c$ an admissible cut, we define the pruned part $P^c(\tau)$ and the trunk $T^c(\tau)$. The trunk $T^c(\tau)$ is given by the connected component containing the root, obtained after removing all edges in $c$ from $\tau$. The pruned part $P^c(\tau)$ is obtained by first concatenating connected components that were cut off from the same vertex in $T^c(\tau)$ into a forest, and then taking the shuffle product of these forests. We now define the MKW coproduct $\Delta_{MKW}: \mathcal{OF}_{\mathcal{C}} \to \mathcal{OF}_{\mathcal{C}}$ by:
\begin{align} 
\label{eq::MKW}
	\Delta_{MKW}(\omega)=\sum_{c \text{ left admis.~cut of }B_+(\omega)} P^c(B_+(\omega)) \otimes B_-(T^c(B_+(\omega))).
\end{align}
We can think of the MKW coproduct as taking left admissible cuts of a forest, where we are allowed to cut between the different trees of the forest. The formula achieves this by grafting the forest onto a root using the $B_+$ operation, then making the cut, and finally removing the added root from the trunk. For example:
\begin{align*}
	\Delta_{MKW}\Big(\Forest{[a]}\Forest{[b[c][d]]}\Big)
	&=1 \otimes \Forest{[a]}\Forest{[b[c][d]]} + \Forest{[a]}\Forest{[b[c][d]]}\otimes 1 
		+ \Forest{[a]} \otimes \Forest{[b[c][d]]}\\
	&+ \Forest{[c]}\otimes \Forest{[a]}\Forest{[b[d]]}
		+ \Forest{[a]}\shuffle \Forest{[c]}\otimes \Forest{[b[d]]} 
		+ \Forest{[c]}\Forest{[d]}\otimes \Forest{[a]}\Forest{[b]} 
		+ \Forest{[a]}\shuffle (\Forest{[c]}\Forest{[d]})\otimes \Forest{[b]}.
\end{align*}
The MKW Hopf algebra with vertex decoration $\mathcal{C}$, as used in planarly branched rough paths, is defined as $\mathcal{H}_{MKW}^{\mathcal{C}}=(\mathcal{OF}_{\mathcal{C}},\shuffle,\Delta_{MKW})$, where $\shuffle$ is the shuffle product. The case of LB-series and undecorated trees is obtained by forgetting the vertex decorations. Recall $Lie(\mathcal{PT}_{\mathcal{C}})$, the free Lie algebra generated by $\mathcal{PT}_{\mathcal{C}}$, and let $S_{\mathcal{C}}=(S(Lie(\mathcal{PT}_{\mathcal{C}})\times \mathcal{C}),\centerdot)$ be the free symmetric algebra generated by $Lie(\mathcal{PT}_{\mathcal{C}})\times \mathcal{C}$. The important concepts of substitution of LB-series \cite{LundMuntheK2013,MuntheKFllesdal2018} and translations of planarly branched rough paths \cite{Rahm2021RP}, which will later be generalised to renormalisation of regularity structures, can be described by two coactions, $\rho_S$ respectively $\rho_T$, from $\mathcal{H}_{MKW}$ to $S_{\mathcal{C}} \otimes \mathcal{H}_{MKW}$. They are called cosubstitution and cotranslation, respectively. To describe these coactions, we define the notion of \textit{admissible partition}. Let $\omega$ be a forest and let $\omega_1,\ldots,\omega_n$ be disjoint subforests of $\omega$. Note that there is no order on the subforests, the subscripts only serve to label the subforests for notational purposes. We say that $\omega_1,\ldots,\omega_n$ is an admissible partition if:
\begin{itemize}
\item Either all roots of $\omega_i$ are roots of $\omega$, or if every root of $\omega_i$ is grafted onto the same vertex of $\omega$. Furthermore, the roots of $\omega_i$ are adjacent in the planar embedding of $\omega$.
\item If $e$ is an edge in $\omega$, then every edge $e'$ that is outgoing from the same vertex as $e$ and is to the right of $e$ in the planar embedding of $\omega$, is also in $\omega_i$.
\end{itemize}
If every vertex of $\omega$ belongs to an $\omega_i$, we say that $\omega_1,\ldots,\omega_n$ is a \textit{spanning admissible partition}. For $\omega_1,\ldots,\omega_n$ admissible, we denote by $\omega/_{\textbf{c}}\ \omega_1,\ldots,\omega_n, \textbf{c}=(c_1,\ldots,c_n) \in \mathcal{C}^{\times n}$ the forest obtained by contracting each $\omega_i$ into a single vertex decorated by the $i:th$ component of $\textbf{c}$. We then define the coactions by:
\begin{align} 
\label{eq::CoSubstitution}
	\rho_S(\omega)
	&= \sum_{\omega_1,\dots,\omega_n \atop \text{spanning admiss.~partition}  } \sum_{\textbf{c} \in \mathcal{C}^{\times n}} (\pi(\omega_1),c_1)\centerdot \dots \centerdot (\pi(\omega_n), c_n) \otimes \omega /_{\textbf{c}} \omega_1,\dots,\omega_n, \\ \label{eq::CoTranslation}
\rho_T(\omega)
	&= \sum_{\omega_1,\dots,\omega_n \atop \text{admiss.~partition}  } \sum_{\textbf{c} \in \mathcal{C}^{\times n}} (\pi(\omega_1),c_1)\centerdot \dots \centerdot (\pi(\omega_n), c_n) \otimes \omega /_{\textbf{c}} \omega_1,\dots,\omega_n,
\end{align}
where $\pi:\mathcal{OF}_{\mathcal{C}} \to Lie(\mathcal{PT}_{\mathcal{C}})$ is the projection onto Lie polynomials. We illustrate this by an example. Consider the decorated tree
\begin{align*}
\Forest{[a[b][c[d]]]}.
\end{align*}
The spanning admissible partitions are as follows, where each $\omega_i$ is identified by vertex colours:
\begin{align*}
\Forest{[a,fill=blue[b,fill=blue][c,fill=blue[d,fill=blue]]]},\Forest{[a,fill=blue[b,fill=red][c,fill=green[d,fill=yellow]]]},\Forest{[a,fill=blue[b,fill=red][c,fill=blue[d,fill=blue]]]},\Forest{[a,fill=red[b,fill=blue][c,fill=blue[d,fill=blue]]]},\Forest{[a,fill=blue[b,fill=blue][c,fill=blue[d,fill=red]]]},\Forest{[a,fill=blue[b,fill=red][c,fill=blue[d,fill=yellow]]]},\Forest{[a,fill=blue[b,fill=red][c,fill=red[d,fill=yellow]]]},\Forest{[a,fill=yellow[b,fill=red][c,fill=blue[d,fill=blue]]]}.
\end{align*}
Hence the coaction $\rho_S$ evaluates to:
\begin{align*}
	\lefteqn{\rho_S\Big(\Forest{[a[b][c[d]]]}\Big)
	= \sum_{c_1 \in \mathcal{C}} \Big(\Forest{[a[b][c[d]]]},c_1\Big) \otimes \Forest{[c_1]} 
		+ \sum_{\textbf{c} \in \mathcal{C}^{\times 4}} \Big(\Forest{[a]},c_1\Big) \centerdot \Big(\Forest{[b]},c_2\Big) \centerdot \Big(\Forest{[c]},c_3\Big) \centerdot \Big(\Forest{[d]},c_4\Big) \otimes \Forest{[c_1[c_2][c_3[c_4]]]}}\\
	&+ \sum_{\textbf{c} \in \mathcal{C}^{\times 2}} \Big(\Forest{[b]},c_1\Big) \centerdot \Big(\Forest{[a[c[d]]]},c_2\Big) \otimes \Forest{[c_2[c_1]]}
		+ \sum_{\textbf{c} \in \mathcal{C}^{\times 2}} \Big(\Forest{[a]},c_1\Big) \centerdot \Big([\Forest{[b]},\Forest{[c[d]]}],c_2\Big) \otimes \Forest{[c_1[c_2]]}\\
	&+ \sum_{\textbf{c} \in \mathcal{C}^{\times 2}} \Big(\Forest{[d]},c_1\Big) \centerdot \Big(\Forest{[a[b][c]]},c_2\Big) \otimes \Forest{[c_2[c_1]]} 
		+ \sum_{\textbf{c} \in \mathcal{C}^{\times 3}} \Big(\Forest{[b]},c_1\Big) \centerdot \Big(\Forest{[d]},c_2\Big) \centerdot \Big(\Forest{[a[c]]},c_3\Big) \otimes \Big(\Forest{[c_3[c_1][c_2]]}+\Forest{[c_3[c_2][c_1]]} \Big)\\
	&+ \sum_{\textbf{c} \in \mathcal{C}^{\times 3}} \Big(\Forest{[a]},c_1\Big) \centerdot \Big([\Forest{[b]},\Forest{[c]}],c_2\Big) \centerdot \Big(\Forest{[d]},c_3\Big) \otimes \Forest{[c_1[c_2[c_3]]]} 
		+ \sum_{\textbf{c} \in \mathcal{C}^{\times 3}} \Big(\Forest{[a]},c_1\Big) \centerdot \Big(\Forest{[b]},c_2\Big) \centerdot \Big(\Forest{[c[d]]},c_3\Big) \otimes \Forest{[c_1[c_2][c_3]]}.
\end{align*}
We will now describe the Butcher--Connes--Kreimer (BCK) Hopf algebra defined over non-planar rooted trees. Recall that the latter have not been endowed with a planar embedding. The set of non-planar rooted trees is denoted by:
\begin{align*}
	\mathbf{T}=\{& \Forest{[]},\Forest{[[]]},\Forest{[[][]]},\Forest{[[[]]]},\Forest{[[][][]]},\Forest{[[[]][]]},\Forest{[[[[]]]]},\\
&\Forest{[[][][][]]},\Forest{[[[]][][]]},\Forest{[[[[]]][]]},\Forest{[[[]][[]]]},\Forest{[[[[[]]]]]},\dots \},
\end{align*}
and spans the vector space $\mathcal{T}$. The symmetric algebra $\mathcal{F}$ generated by $\mathcal{T}$ is spanned by elements from the set $\mathbf{F}$ of forests. We define the injection $\Omega: \mathcal{F} \to \mathcal{OF}$ as the sum over all ways to endow a non-planar forest with a planar embedding, for example:
\begin{align*}
	\Omega\Big(\Forest{[a[b[c]][d]]}\Forest{[e[f]]}\Big)
	&= \Forest{[a[b[c]][d]]}\Forest{[e[f]]}
	+\Forest{[a[d][b[c]]]}\Forest{[e[f]]}
	+\Forest{[e[f]]}\Forest{[a[b[c]][d]]}
	+\Forest{[e[f]]}\Forest{[a[d][b[c]]]}.
\end{align*}
The map $\Omega$ is in fact a Hopf algebra morphism from the BCK Hopf algebra $\mathcal{H}_{BCK}=\{\mathcal{F},\cdot,\Delta_{CK} \}$ to the MKW Hopf algebra \cite{MuntheKaasWright2008}. Hence the image of $\Omega$ is a Hopf subalgebra of $\mathcal{H}_{MKW}$, isomorphic to $\mathcal{H}_{BCK}$. The coproduct $\Delta_{CK}$ is given by \textit{admissible cuts}, which are subsets of edges that contain at most one edge from any path from the root to a leaf. Note that they differ from left admissible cuts (on planar trees), in that they have no requirement with respect to the planar embedding. We can then describe $\Delta_{CK}$ by:
\begin{align*}
	\Delta_{CK}(\omega)=\sum_{c \text{ admiss.~cut of }B_+(\omega)} P^c(B_+(\omega)) \otimes B_-(T^c(B_+(\omega))).
\end{align*}
Compare with equation \eqref{eq::MKW} that defined the MKW coproduct. We also obtain the non-planar versions of the two coactions $\rho_S$, $\rho_T$ in this way, i.e., by forgetting the planarity in the definition of admissible partitions. Denote them by $\rho_S^{np},\rho_T^{np}: \mathcal{F}_{\mathcal{C}} \to S(\mathcal{T}_{\mathcal{C}}\times \mathcal{C}) \otimes \mathcal{F}_{\mathcal{C}}$. They can be described by contracting subtrees, where we have no planar requirements on these subtrees, compare to \eqref{eq::CoSubstitution} and \eqref{eq::CoTranslation}:
\begin{align*}
	\rho_S^{np}(\omega)
	&= \sum_{\omega_1,\dots,\omega_n \atop \text{spanning disjoint subtrees}  } \sum_{\textbf{c} \in \mathcal{C}^{\times n}} (\omega_1,c_1)\centerdot \dots \centerdot (\omega_n, c_n) \otimes \omega /_{\textbf{c}}\ \omega_1,\dots,\omega_n, \\
	\rho_T^{np}(\omega)
	&= \sum_{\omega_1,\dots,\omega_n \atop \text{disjoint subtrees}  } \sum_{\textbf{c} \in \mathcal{C}^{\times n}} (\omega_1,c_1)\centerdot \dots \centerdot (\omega_n, c_n) \otimes \omega /_{\textbf{c}}\ \omega_1,\dots,\omega_n.
\end{align*}
An alternative way to arrive at the BCK Hopf algebra is to consider the free pre-Lie algebra structure on non-planar rooted trees. Recall that a pre-Lie algebra is a post-Lie algebra with abelian Lie bracket, i.e., they are characterised by the (left) pre-Lie axiom
\begin{align*}
	x \curvearrowright (y \curvearrowright z) - (x \curvearrowright y) \curvearrowright z 
	- y \curvearrowright (x \curvearrowright z) + (y \curvearrowright x) \curvearrowright z = 0,
\end{align*}
obtained by setting the Lie bracket to zero in \eqref{eq::PostLieAssociator} and \eqref{eq::PostLieDerivation}. The free pre-Lie product $\tau_1 \curvearrowright \tau_2$ of non-planar rooted trees is given as the sum over all ways to add an edge from any vertex of $\tau_2$ to the root of $\tau_1$. As in the post-Lie algebra case, we can extend the grafting product from $\mathcal{T}_{\mathcal{C}}$ to its universal enveloping algebra using the Guin--Oudom relations \eqref{eq::GuinOudom1}-\eqref{eq::GuinOudom4}. The universal enveloping algebra of $\mathcal{T}_{\mathcal{C}}$ is $\mathcal{F}_{\mathcal{C}}$. We then arrive at the dual Hopf algebra to $\mathcal{H}_{BCK}$ via equation \eqref{eq::GrossmanLarson}.

\medskip

We now sketch how the BCK-like Hopf algebra used in regularity structures is constructed following the recent work of Bruned and Katsetsiadis  \cite{BrunedKatsetsiadis2022}. The setting is the one of non-planar rooted trees where both vertices and edges are decorated by elements from $\mathbb{N}^d$, the space of these trees is denoted by $\mathfrak{T}$. A non-planar rooted tree is called \textit{planted} if its root is undecorated and has exactly one outgoing edge. Let $I_{\alpha}$ be the operator that grafts an input tree $\tau$ onto an undecorated root, by using an edge decorated by $\alpha$, for example:
\begin{align*}
	I_{\alpha}\Big(\Forest{[a[b,edge label = {node[midway,fill=white,scale=0.5]{$\beta$}}][c,edge label = {node[midway,fill=white,scale=0.5]{$\gamma$}}[d,edge label = {node[midway,fill=white,scale=0.5]{$\delta$}}]]]}\Big)
	=\Forest{[[a,edge label = {node[midway,fill=white,scale=0.5]{$\alpha$}}[b,edge label = {node[midway,fill=white,scale=0.5]{$\beta$}}][c,edge label = {node[midway,fill=white,scale=0.5]{$\gamma$}}[d,edge label = {node[midway,fill=white,scale=0.5]{$\delta$}}]]] ]}.
\end{align*}
Then any planted tree can be written as $I_{\alpha}(\tau)$, for some $\alpha \in \mathbb{N}^d$ and some decorated tree $\tau$. Planted trees can be endowed with the pre-Lie product defined by
\begin{align*}
	I_{\alpha}(\tau_1) \curvearrowright I_{\beta}(\tau_2) := I_{\beta}(\tau_1 \curvearrowright_{\alpha} \tau_2),
\end{align*}
where $\curvearrowright_{\alpha}$ means grafting $\tau_1$ onto $\tau_2$ using an edge decorated by $\alpha$. This is then deformed into the pre-Lie product
\begin{align*}
	I_{\alpha}(\tau_1) \widehat{\curvearrowright} I_{\beta}(\tau_2) 
	= \sum_{v \in N_{\tau_2}} \sum_{\ell \in \mathbb{N}^d} {n_v \choose \ell}I_{\beta}( \tau_1 \curvearrowright^v_{\alpha-\ell}(\uparrow_v^{-\ell} \tau_2 ) ),
\end{align*}
where the set of vertices of $\tau_2$ is denoted $N_{\tau_2}=V(\tau_2)$. The operation $\curvearrowright^v_{\alpha-\ell}$ means grafting onto the vertex $v$, $\uparrow_v^{-\ell}$ is defined by subtracting $\ell$ from the decoration of the vertex $v$, and $n_v$ means the decoration of the vertex $v$. For example:
\begin{align*}
	\Forest{[ [a,edge label = {node[midway,fill=white,scale=0.5]{$\alpha$}}[b,edge label = {node[midway,fill=white,scale=0.5]{$\beta$}}][c,edge label = {node[midway,fill=white,scale=0.5]{$\gamma$}}]] ]} \widehat{\curvearrowright}\ \Forest{[ [d,edge label = {node[midway,fill=white,scale=0.5]{$\delta$}}[e,edge label = {node[midway,fill=white,scale=0.5]{$\epsilon$}}]] ]} =
 \sum_{\ell} {d \choose \ell} \Forest{[ [d-\ell,edge label = {node[midway,fill=white,scale=0.5]{$\delta$}}[a,edge label = {node[midway,fill=white,scale=0.5]{$\alpha-\ell$}}[b,edge label = {node[midway,fill=white,scale=0.5]{$\beta$}}][c,edge label = {node[midway,fill=white,scale=0.5]{$\gamma$}}]][e,edge label = {node[midway,fill=white,scale=0.5]{$\epsilon$}}]] ]}
 + \sum_{\ell} {e \choose \ell} \Forest{[ [d,edge label = {node[midway,fill=white,scale=0.5]{$\delta$}}[e-\ell,edge label = {node[midway,fill=white,scale=0.5]{$\epsilon$}}[a,edge label = {node[midway,fill=white,scale=0.5]{$\alpha-\ell$}}[b,edge label = {node[midway,fill=white,scale=0.5]{$\beta$}}][c,edge label = {node[midway,fill=white,scale=0.5]{$\gamma$}}]]]] ]}.
\end{align*}
Now consider the units $X^i, i=\{0,\dots,0,1,0,\dots,0\}\in \mathbb{N}^d$ and let $\mathcal{V}$ be the vector space that has planted trees and $X^i$'s as basis. Define the Lie bracket $[\cdot,\cdot]_0$:
\begin{align*}
[X^i,X^j]_0=&0,\\
[I_{\alpha}(\tau_1),I_{\beta}(\tau_2)]_0=&0,\\
[I_{\alpha}(\tau_1),X^i]=&I_{\alpha-i}(\tau_1),
\end{align*}
and extend $\widehat{\curvearrowright}$ to $\mathcal{V}$ by:
\begin{align*}
X^i \widehat{\curvearrowright} I_{\alpha}(\tau) =& \sum_{v \in N_{\tau}} I_{\alpha}( \uparrow_v^i \tau ),\\
I_{\alpha}(\tau) \widehat{\curvearrowright} X^i =&0,\\
X^i \widehat{\curvearrowright} X^j =& 0.
\end{align*}
Then $(\mathcal{V},\widehat{\curvearrowright},[\cdot,\cdot]_0)$ is a post-Lie algebra \cite{BrunedKatsetsiadis2022}. The product $\widehat{\curvearrowright}$ can be extended to the universal enveloping algebra by \eqref{eq::GuinOudom1}-\eqref{eq::GuinOudom4}, and one obtains a Hopf algebra by defining the product $\ast_+$ according to equation \eqref{eq::GrossmanLarson}. This is the dual Hopf algebra to the Connes--Kreimer--like Hopf algebra used in regularity structures.

Lastly, we want to sketch how the cotranslation (renormalisation) coaction was constructed for regularity structures in \cite{BrunedManchon2022}. We define a product $\dinsert$ by:
\begin{align*}
\tau_1 \dinsert_v \tau_2 =& (P_v(\tau_2) \ast_+ \tau_1) \curvearrowright_v T_v(\tau_2),\\
\tau_1 \dinsert \tau_2 =& \sum_{v \text{ vertex of } \tau_2} \tau_1 \dinsert_v \tau_2,
\end{align*}
where $P_v(\tau_2)$ is the subtree of $\tau_2$ that has $v$ as a root, and $T_v(\tau_2)$ is the tree obtained by removing all branches attached to $v$ in $\tau_2$ and setting the decoration of $v$ to zero. The product $\curvearrowright_v$ means identifying the root of the left argument with the vertex $v$ in the right argument. This is a pre-Lie product and we can extend it to the symmetric algebra by \eqref{eq::GuinOudom1}-\eqref{eq::GuinOudom4}. This gives an action $S(\mathfrak{T}^-)\otimes \mathfrak{T} \to \mathfrak{T}$, where $\mathfrak{T}^-$ is the subspace of trees that has negative regularity. The renormalisation coaction is the dual of this action.

\subsection{Rough paths in a combinatorial Hopf algebra, and their translations}
\label{ssect:Translations}

We will in later sections link the concept of translations of rough paths with that of renormalisation of regularity structures. Because of this, we find it useful to recall some results following mainly the article \cite{Rahm2021RP}. The notion of translation of rough path was first considered in the 2017 work \cite{BrunedChevyrevFrizPreiss2017}, and further developed in the recent article \cite{BellingeriFrizPaychaPreiss2021}. Rough paths from the viewpoint of combinatorial Hopf algebras were considered in \cite{CurryEbrahimi-FardManchonMunthe-Kaas2018}, which is the setting we will be working in. See also \cite{TapiaZambotti2020}. The relevant definitions are as follows:

\begin{definition}
\label{def:combHopfAlg}
A combinatorial Hopf algebra $(\mathcal{H},\odot,\Delta,\eta,\epsilon)$ is a graded connected Hopf algebra $\mathcal{H} = \oplus_{n=0}^{\infty}\mathcal{H}_n$ over a field $\mathbb{K}$ of characteristic zero, together with a basis $\mathcal{B}=\cup_{n \geq 0} \mathcal{B}_n$ of homogeneous elements, such that:
\begin{enumerate}
\item There exists two positive constants $B$ and $C$ such that the dimension of $\mathcal{H}_n$ is bounded by $BC^n$.
\item The structure constants $c_{x y}^{z}$ and $c_{z}^{x y}$ of the product respectively the coproduct, defined for all elements $x,y,z \in \mathcal{B}$ by
\begin{align*}
x \odot y =& \sum_{z \in \mathcal{B}} c_{x y}^{z} z, \\
\Delta(z) =& \sum_{x,y \in \mathcal{B}} c_{z}^{x y} x \otimes y,
\end{align*}
are non-negative integers.
\end{enumerate}
We furthermore say that $\mathcal{H}$ is non-degenerate if $\mathcal{B} \cap \text{Prim}(\mathcal{H})=\mathcal{B}_1$.
\end{definition}

\begin{remark}
The above definition of a combinatorial Hopf algebras was given in \cite{CurryEbrahimi-FardManchonMunthe-Kaas2018}. and is useful for our specific setting. In general, there is no consensus on what an appropriate definition of a combinatorial Hopf algebra should be.
\end{remark}

\begin{definition} \label{def::RoughPath}
Let $\mathcal{H}=\oplus_{n\geq 0} \mathcal{H}_n$ be a commutative graded Hopf algebra with unit $1$, and let $\gamma \in (0,1]$. Suppose that $\mathcal{H}$ is endowed with a basis $\mathcal{B}$ making it combinatorial and non-degenerate in the sense of Definition \ref{def:combHopfAlg}. A $\gamma$-regular $\mathcal{H}$-rough path is a two-parameter family $\mathbb{X}=(\mathbb{X}_{st})_{s,t \in \mathbb{R}}$ of linear forms on $\mathcal{H}$ such that $\langle \mathbb{X}_{st},1\rangle =1$ and: 
\begin{enumerate}
\item For any $s,t \in \mathbb{R}$ and any $x,y \in \mathcal{H}$, the following identity holds
\begin{align*}
\langle \mathbb{X}_{st},x \odot y \rangle 
= \langle \mathbb{X}_{st},x \rangle\langle \mathbb{X}_{st}, y \rangle.
\end{align*}
\item For any $s,t,u \in \mathbb{R}$, Chen's identity holds
\allowdisplaybreaks
\begin{align*}
\mathbb{X}_{su} \ast \mathbb{X}_{ut}=\mathbb{X}_{st},
\end{align*}
where $\ast$ is the convolution product for linear forms on $\mathcal{H}$, defined in terms of the coproduct on $\mathcal{H}$.
\item For any $n \geq 0$ and any $x \in \mathcal{B}_n$, we have estimates
\begin{align*}
\sup_{s \neq t} \frac{|\langle \mathbb{X}_{st},x \rangle |}{|t-s|^{\gamma |x|}} < \infty,
\end{align*}
where $|x|=n$ denotes the degree of the element $x \in \mathcal{B}_n$.
\end{enumerate}
\end{definition}

A rough path in the BCK Hopf algebra is called a \textit{branched rough path}, these are used to solve rough differential equations
\begin{align}
	dY_t=f_0(Y_t)dt + \sum_{i=1}^n f_i(Y_t)dX_t^i, \quad Y_0=y_0 \label{eq::BranchedDiffEq}
\end{align}
on $\mathbb{R}^d$. If the driving path $X$ was smooth, one could solve the equation with Picard iteration
\begin{align*}
	Y_t=Y_0+\sum_{n=1}^{\infty} \sum_{1 \leq i_1 \leq \cdots \leq i_n\leq n} \,
	\big(\idotsint\limits_{s\leq t_1\leq \cdots\leq t_n\leq t} 
	f_{i_n}\cdots f_{i_1}dX_{t_1}^{i_1}\cdots dX_{t_n}^{i_n}\big).
\end{align*}
If the path $X$ is sufficiently rough, the above integrals are not defined. The main idea of rough path theory is to obtain a notion of solution by postulating the values of the first few integrals. The properties we require in Definition \ref{def::RoughPath} generalise the properties we expect a notion of integration to have. Indeed, we think of the linear forms $\mathbb{X}_{st}$ as encoding integrals, and we think of the parameters $s,t$ as the integration limits. In branched rough paths, the rooted tree structure of the elements in the BCK Hopf algebra reflects the structure of nested integrals. We, for example, have the following, where we think of the right side as being defined by the left side:
\begin{align*}
	\langle \mathbb{X}_{st}, \Forest{[3[1][2]]}\rangle =: \int_s^t X^{i_1}_{t_1}X^{i_2}_{t_2}dX^{i_3}_{t_1}.
\end{align*}
A rough path in the MKW Hopf algebra is called \textit{planarly branched rough path}. We will take a closer look at these in a later section. Note that, because the BCK Hopf algebra is a Hopf subalgebra of the MKW Hopf algebra, branched rough paths will be a special case of planarly branched rough paths.\\

Recall that the definition of a combinatorial Hopf algebra comes with a choice of a basis $\mathcal{B}$, this basis will also play a role in the concept of translations. Let $(\mathcal{H},\odot,\Delta,\eta,\epsilon)$ be a non-degenerate combinatorial Hopf algebra with basis $\mathcal{B}$ and $\mathcal{B}_1=\{e_0,\dots,e_{m-1}\}$. Let $(\mathcal{H}^{\ast},\ast,\Delta_{\odot})$ denote the graded dual space, with convolution product $\ast$ dual to $\Delta$ and coproduct $\Delta_{\odot}$ dual to $\odot$. Let $(\overline{\mathcal{H}^{\ast}},\ast,\Delta_{\odot})$ be the completion of $\mathcal{H}^{\ast}$ with respect to the grading, together with the continuously extended product $\ast$, and $\Delta_{\odot}$ extended continuously to a map $\overline{\mathcal{H}^{\ast}} \to \mathcal{H}^{\ast}\overline{\otimes} \mathcal{H}^{\ast}$. 

A translation is a family of maps $T_\textbf{v}$, indexed by the $m$ primitive elements $\textbf{v}=(v_0,\dots,v_{m-1} ) \in P(\overline{\mathcal{H}^{\ast}})^{\times m}$, such that $T_{\textbf{v}}(e_i)=e_i+v_i$ for all $e_i \in \mathcal{B}_1$ and such that some additional conditions are fulfilled. In the specific context of regularity structures, we will work with two special assumptions:
\begin{enumerate}
\item The basis element $e_0$ corresponds to time, meaning that our rough paths $\mathbb{X}_{st}$ will satisfy the identity:
\begin{align}
\langle \mathbb{X}_{st},e_0\rangle = t-s. \label{eq::ZeroIsTime}
\end{align}
\item We will only translate in the time direction, meaning that our index vectors $\textbf{v}$ will be of the form:
\begin{align}
\textbf{v}=(v_0,0,\dots,0). \label{eq::TranslateInTimeDirection}
\end{align}
\end{enumerate}

\begin{definition}
A rough path that satisfies the assumption \eqref{eq::ZeroIsTime} is called a time-augmented rough path, and a translation map $T_{\bf{v}}$ that satisfies the assumption \eqref{eq::TranslateInTimeDirection} is called a time-translation.
\end{definition}

We now give the full definition of a translation as defined in \cite{Rahm2021RP}.

\begin{definition} \label{def::Translation}
A family of algebra morphisms $T_{\bf{v}}: \overline{\mathcal{H}^{\ast}} \to \overline{\mathcal{H}^{\ast}}$ is a translation if
\begin{enumerate}
\item $T_{\bf{v}}(e_i)=e_i+v_i$ for every $e_i \in \mathcal{B}_1$ and some ${\bf{v}}=(v_0,\ldots,v_{m-1})$, $v_i \in \overline{\mathcal{H}^{\ast}}$ primitive.

\item $T_{\bf{v}} \circ T_{\bf{u}} = T_{\bf{v}} + T_{\bf{v}}({\bf{u}})$, where $T_{\bf{v}}({\bf{u}})=(T_{\bf{v}}(u_0), \ldots, T_{\bf{v}}(u_{m-1}))$.

\item For each $\mathcal{H}$-rough path $\mathbb{X}_{st}$, the pointwise translation $T_{\bf{v}}(\mathbb{X}_{st})$ is a $\mathcal{H}$-rough path:
\begin{enumerate}

\item $T_{\bf{v}}$ maps characters to characters.

\item $T_{\bf{v}}$ is a morphism with respect to the convolution product of $\overline{\mathcal{H}^*}$.

\item The bound \begin{align*}
\sup_{s \neq t} \frac{|\langle T_{\bf{v}}(\mathbb{X}_{st}),x \rangle |}{|t-s|^{\gamma |x|}} < \infty
\end{align*}
holds.
\end{enumerate}
\item \label{item:coaction} There exists a coaction $\rho_T : \mathcal{H} \to S(P(\mathcal{H}^{\ast}) \times \mathcal{B}_1)\otimes \mathcal{H}$ such that $\langle T_{\bf{v}}(\chi),x\rangle = \langle e^{\bf{v}} \otimes \chi,\rho_T(x)\rangle$.
\end{enumerate}
\end{definition}

\begin{remark}
	In the case of planarly branched rough paths, the coaction $\rho_T$ in point $4$ of the definition is the cotranslation coaction defined in equation \eqref{eq::CoTranslation}. For branched rough paths, the coaction is $\rho_T^{np}$.
\end{remark}

The space $S(P(\mathcal{H}^{\ast})\times \mathcal{B}_1)$ appears in point $4$ of the definition. We will not go into details on this space. Instead, we will remark that, when $T_{\textbf{v}}$ is a time-translation, the map $\rho_T$ can be restricted to a map $\rho_T^0: \mathcal{H}\to S(P(\mathcal{H}^{\ast})) \otimes \mathcal{H}$ such that $\langle T_{\textbf{v}}(\chi),x\rangle = \langle e^{\textbf{v}}\otimes \chi, \rho_T^0(x)\rangle$, where $e^{\textbf{v}} \in S(P(\mathcal{H}^{\ast}))$ is the character generated by $v_0 \in P(\mathcal{H}^{\ast})$. We call the map $\rho_T$ \textit{cotranslation} and the map $\rho_T^0$ \textit{time-cotranslation}. We will later need the following important property of (time-)cotranslation, called \textit{cointeraction}:

\begin{proposition}[\cite{Rahm2021RP}]
The coaction $\rho_T$ satisfies the identity
\begin{align*}
	m^{1,3}(\rho_T \otimes \rho_T)\Delta = (id \otimes \Delta)\rho_T,
\end{align*}
where
\begin{align*}
	m^{1,3}(a \otimes b \otimes c \otimes d)=ac\otimes b \otimes d.
\end{align*}
\end{proposition}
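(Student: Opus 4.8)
The plan is to prove the identity by matching the two sides term by term, noting that both take values in $S_{\mathcal{C}}\otimes\mathcal{H}_{MKW}\otimes\mathcal{H}_{MKW}$. Recall from \eqref{eq::CoTranslation} that $\rho_T$ sums over admissible partitions $\omega_1,\dots,\omega_n$ of a forest $\omega$, contracting each block to a decorated vertex, while $\Delta=\Delta_{MKW}$ from \eqref{eq::MKW} sums over left admissible cuts, separating a pruned part $P^{c}$ from a trunk $T^{c}$. Thus the right-hand side $(\mathrm{id}\otimes\Delta_{MKW})\rho_T(\omega)$ is indexed by pairs consisting of an admissible partition of $\omega$ together with a left admissible cut of the contracted forest $\omega/_{\textbf{c}}\,\omega_1,\dots,\omega_n$, whereas the left-hand side $m^{1,3}(\rho_T\otimes\rho_T)\Delta_{MKW}(\omega)$ is indexed by triples consisting of a left admissible cut $c'$ of $\omega$, an admissible partition of $P^{c'}(\omega)$, and an admissible partition of $T^{c'}(\omega)$, where $m^{1,3}$ concatenates the two resulting factors in $S_{\mathcal{C}}$.

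First I would establish a bijection between these two indexing sets. Given an admissible partition $\{\omega_i\}$ and a left admissible cut $c$ of the contracted forest, I lift $c$ to the set $c'$ of corresponding edges of $\omega$; since every edge of $\omega/_{\textbf{c}}\,\omega_1,\dots,\omega_n$ is an edge of $\omega$ joining two distinct blocks, or joining a block to an uncontracted vertex, the cut $c$ never meets the interior of a block. Consequently each $\omega_i$ lies entirely on one side of $c'$, so $c'$ distributes the blocks into those falling in $P^{c'}(\omega)$ and those falling in $T^{c'}(\omega)$, furnishing admissible partitions of the pruned part and of the trunk; the inverse map reassembles these into a single partition of $\omega$ for which $c'$ is external and hence descends to a cut of the contracted forest. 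The geometric fact underlying the matching is that contraction commutes with cutting along external edges, so that $P^{c}$ of the contracted forest equals the contraction of $P^{c'}(\omega)$, and likewise for the trunk; this identifies the two $\mathcal{H}_{MKW}$-slots. The $S_{\mathcal{C}}$-factor matches because $(\pi(\omega_1),c_1)\centerdot\cdots\centerdot(\pi(\omega_n),c_n)$ factors as the (commutative) product of the contributions of the pruned blocks and of the trunk blocks, and $\pi(\omega_i)$ depends only on the block itself; this product is precisely what $m^{1,3}$ reconstitutes from the two halves.

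The main obstacle is the planar bookkeeping, and this is where the real work lies. Left admissible cuts are closed to the left at each vertex, whereas admissible partition blocks are closed to the right, and I must verify that these opposite-handed conditions remain compatible under the lift $c\mapsto c'$: a cut should be left admissible on the contracted forest if and only if $c'$ is left admissible on $\omega$ and compatible with the right-closure defining the blocks. Since contraction can alter the number and the left-to-right order of the children at a vertex --- the outgoing edges of a contracted vertex may issue from several distinct vertices inside the block --- the local planar picture at each contracted vertex has to be analysed directly. A second delicate point is that the pruned part $P^{c}$ is defined via a shuffle of the cut-off components, so applying $\rho_T$ to a pruned part involves admissible partitions of a shuffle; I must check that shuffling-then-contracting yields exactly the same multiplicities as contracting-then-shuffling. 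I would reduce both checks to the action at a single vertex of $B_+(\omega)$ and verify the local planar combinatorics there, the global identity then following cut-edge by cut-edge.

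As an independent consistency check, one may dualise the statement: $\Delta_{MKW}$ is dual to the planar Grossman--Larson product $\ast$ of \eqref{eq::GrossmanLarson} and $\rho_T$ is dual to the corresponding translation/insertion action, so the cointeraction identity becomes the assertion that $\mathcal{H}_{MKW}$ is a coalgebra in the category of $S_{\mathcal{C}}$-comodules, equivalently that $\ast$ is a morphism for the dual action. Verifying this compatibility on the algebra generators would give a second, purely algebraic route to the same result.
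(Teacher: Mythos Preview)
The paper does not actually prove this proposition: it is stated with the citation \cite{Rahm2021RP} and no proof environment follows. So there is no ``paper's own proof'' to compare against for this specific statement.

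That said, your strategy is the right one, and it is precisely the template the paper itself adopts later when proving the analogous cointeraction (Proposition~\ref{prop::Cointeraction}) in the decorated planar-regularity-structure setting. There the argument is exactly: fix a cut $c$ and an admissible partition $\tau_1,\dots,\tau_n$ with $c$ disjoint from the interiors of the $\tau_i$, observe that the skeletons on both sides match because contraction along blocks commutes with cutting along external edges, and then verify that the remaining data (there, the deformed decorations and binomial coefficients; here, merely the shuffle multiplicities) agree. Your identification of the two delicate points --- the opposite-handed closure conditions (left-closed cuts versus right-closed partition blocks) and the shuffle in $P^c$ --- is accurate, and reducing both to a local check at a single vertex of $B_+(\omega)$ is how one makes this rigorous. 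The dual reformulation you mention, that translation is a $\ast$-algebra morphism, is in fact how the result is organised in \cite{Rahm2021RP}; either route works.
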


The cointeraction property also holds for $\rho_T^0$.

\subsection{Planarly branched rough paths}
\label{ssect:planarly}

Branched rough paths were generalized to planarly branched rough paths in \cite{CurryEbrahimi-FardManchonMunthe-Kaas2018}, to solve differential equations
\begin{align}
	dY_{st}=\sum_{i=1}^n f_i(Y_{st})dX_t^i, \quad Y_{ss}=y_0, \label{eq::PlanarlyBranchedEquation}
\end{align}
where $Y_{s \cdot}: \mathbb{R} \to \mathcal{M}$, $f_i : \mathcal{M} \to T\mathcal{M}$, $X_{\cdot}^i : \mathbb{R} \to \mathbb{R}$, for $\mathcal{M}$ a homogeneous space. Note, in the case of $\mathcal{M}$ being $\mathbb{R}^d$, we have branched rough paths. As a homogeneous space, $\mathcal{M}$ has a transitive action by Lie group $G$:
\begin{align*}
	G \times \mathcal{M} \to & \mathcal{M} \\
	(g,x) \mapsto & g.x
\end{align*}
Let $\mathfrak{g}$ be the Lie algebra of $G$. Then we can represent vector fields by Lie algebra valued maps:
\begin{align*}
f:& \mathcal{M} \to  \mathfrak{g},\\
\#f(x)=&\frac{d}{dt}|_{t=0}\exp(tf(x)).x \in T_x \mathcal{M},
\end{align*}
and equation \eqref{eq::PlanarlyBranchedEquation} can be rewritten in the form
\begin{align*}
dY_{st}=\sum_{i=1}^d \#f_i(Y_{st})dX_t^i, \quad Y_{ss}=y_0,
\end{align*}
where $f_i$ are now Lie algebra valued. The space $C^{\infty}(\mathcal{M},\mathfrak{g})$ can then be endowed with the product:
\begin{align*}
(f \graft g)(x)=\frac{d}{dt}|_{t=0}g(\exp(tf(x)).x),
\end{align*}
and the pointwise Lie bracket:
\begin{align*}
[f,g](x)=[f(x),g(x)].
\end{align*}
These two products turn $C^{\infty}(\mathcal{M},\mathfrak{g})$ into a post-Lie algebra, i.e., the axioms \eqref{eq::PostLieAssociator} and \eqref{eq::PostLieDerivation} are satisfied. We now extend the post-Lie algebra over $C^{\infty}(\mathcal{M},\mathfrak{g})$, to a D-algebra over $C^{\infty}(\mathcal{M},U(\mathfrak{g}))$, in order to describe higher order differential operators. Let $V \in \mathfrak{g}$ and $f \in C^{\infty}(\mathcal{M},U(\mathfrak{g}))$, define the Lie derivative:
\begin{align*}
	V[f](x)=\frac{d}{dt}|_{t=0} f(\exp(tV).x),
\end{align*}
and extend it to $U(\mathfrak{g})$ by
\begin{align*}
	VW[f]=V[W[f]],
\end{align*}
where $V \in \mathfrak{g}$ and $W \in U(\mathfrak{g})$. Let the associative product on $C^{\infty}(\mathcal{M},U(\mathfrak{g}))$ be defined pointwise:
\begin{align*}
	(fg)(x)=f(x)g(x).
\end{align*}
For $f,g \in C^{\infty}(\mathcal{M},U(\mathfrak{g}))$, we define
\begin{align*}
	(f \graft g)(x)=(f(x)[g])(x).
\end{align*}
Then $( C^{\infty}(\mathcal{M},U(\mathfrak{g})),\cdot,\graft  )$ is a D-algebra \cite{MuntheKaasWright2008}. Because $\mathcal{OF}_{\mathbf{f}}$ is the free D-algebra, there exists a D-algebra morphism $F_{\textbf{f}} : \mathcal{OF}_{\textbf{f}} \to C^{\infty}(\mathcal{M},U(\mathfrak{g}))$ generated by mapping a single-vertex tree decorated by $i$ to $f_i \in C^{\infty}(\mathcal{M},\mathfrak{g})$. We can now define the second associative product $\ast$ on $C^{\infty}(\mathcal{M},U(\mathfrak{g}))$ by
\begin{align*}
	f \ast g=f_{(1)}(f_{(2)} \graft g).
\end{align*}
This product describes the composition of differential operators:
\begin{align*}
	\# (f \ast g )= \# f \circ \# g.
\end{align*}
The solution $Y_{st}$ to equation \eqref{eq::PlanarlyBranchedEquation} can now be represented by the element $\mathbb{Y}_{st} \in C^{\infty}(\mathcal{M},U(\mathfrak{g}))$ satisfying
\begin{align*}
	\Psi(Y_{st})=(\mathbb{Y}_{st} \graft \Psi)(y_0),
\end{align*}
for all $\Psi \in C^{\infty}(\mathcal{M},U(\mathfrak{g}))$. Using this representation, we can define what we mean by a formal solution to equation \eqref{eq::PlanarlyBranchedEquation}.

\begin{definition}
A formal solution to the equation \eqref{eq::PlanarlyBranchedEquation} is given by
\begin{align*}
	\mathbb{Y}_{st}=\sum_{x \in \mathbf{OF}_{ \{1,\dots,n\} }} \langle \mathbb{X}_{st}, x \rangle F_{\textbf{f}}(x),
\end{align*}
where $\mathbb{X}_{st}$ is a rough path in $\mathcal{H}_{MKW}^{ \{1,\dots,n\} }$ such that
\begin{align*}
	\langle \mathbb{X}_{st}, \Forest{[i]} \rangle = X_t^i - X_s^i.
\end{align*}
\end{definition}

We will now describe translations of planarly branched rough paths. The primitive elements in $\overline{\mathcal{H}_{MKW}^{ \{1,\dots,n\} }}$ form the free post-Lie algebra $( Lie(\mathcal{PT}_{ \{1,\dots,n\} }),\graft,[\cdot,\cdot] )$. Let $\textbf{v}=(v_1,\dots,v_n)$ be $n$ primitive elements and define
\begin{align*}
	T_{\textbf{v}}(\Forest{[i]})=&\Forest{[i]}+v_i, \\
	T_{\textbf{v}}(x \graft y)=&T_{\textbf{v}}(x)\graft T_{\textbf{v}}(y),\\
	T_{\textbf{v}}([x,y])=&[ T_{\textbf{v}}(x), T_{\textbf{v}}(y) ],\\
	T_{\textbf{v}}(a \ast b)=&T_{\textbf{v}}(a) \ast T_{\textbf{v}}(b),
\end{align*}
for $x,y \in Lie(\mathcal{PT}_{ \{1,\dots,n\} })$ and $a,b \in \mathcal{OF}_{ \{1,\dots,n\} }$. Then $T_{\textbf{v}}$ is a translation for planarly branched rough paths and the dual coaction is given by $\rho_T$ defined in equation \eqref{eq::CoTranslation}.

\begin{proposition}[\cite{Rahm2021RP}]
	Let $T_{\bf{v}}$ be translation of planarly branched rough paths. $\mathbb{Y}_{st}$ is a formal solution to
	\begin{align*}
		d\mathbb{Y}_{st}=\sum_{i=1}^n f_i(\mathbb{Y}_{st})d(T_{\bf{v}}(\mathbb{X}_t^i))
	\end{align*}
if and only if $\mathbb{Y}_{st}$ is a formal solution to
\begin{align*}
	d\mathbb{Y}_{st}=\sum_{i=1}^n F_{\textbf{f}}(v_i)(\mathbb{Y}_{st})d\mathbb{X}_t^i.
\end{align*}
\end{proposition}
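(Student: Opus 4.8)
The plan is to show that the two equations have the \emph{same} formal solution; the stated equivalence then follows immediately, since being a formal solution means being equal to the explicit series of the definition. First I unfold both solutions. Translating the driving signals $X^{i}\mapsto T_{\textbf{v}}(X^{i})$ replaces the driving rough path $\mathbb{X}$ by its translate $T_{\textbf{v}}(\mathbb{X})$, so the left-hand equation has formal solution
\[
  \mathbb{Y}^{L}_{st}=\sum_{x\in\mathbf{OF}}\langle T_{\textbf{v}}(\mathbb{X}_{st}),x\rangle\,F_{\textbf{f}}(x),
\]
while the right-hand equation, whose driving fields are $g_{i}=F_{\textbf{f}}(v_{i})$, has formal solution
\[
  \mathbb{Y}^{R}_{st}=\sum_{x\in\mathbf{OF}}\langle \mathbb{X}_{st},x\rangle\,F_{\textbf{g}}(x),
\]
where $F_{\textbf{g}}$ is the unique D-algebra morphism with $F_{\textbf{g}}(\Forest{[i]})=g_{i}$, whose existence is the universal property of the free D-algebra $\mathcal{OF}_{\textbf{f}}$. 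It therefore suffices to prove $\mathbb{Y}^{L}_{st}=\mathbb{Y}^{R}_{st}$ for every $\mathbb{X}$.

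The key step is the operator identity $F_{\textbf{f}}\circ T_{\textbf{v}}=F_{\textbf{g}}$ of D-algebra morphisms on the completion of $\mathcal{OF}_{\textbf{f}}$. Granting it, I push $T_{\textbf{v}}$ through the pairing: using the self-dual forest basis, which identifies $\mathcal{H}_{MKW}$ with its Grossman--Larson dual where $T_{\textbf{v}}$ acts, I regard $T_{\textbf{v}}$ as an endomorphism of $\mathcal{OF}_{\textbf{f}}$ and rewrite
\[
  \mathbb{Y}^{L}_{st}=\sum_{x}\langle \mathbb{X}_{st},x\rangle\,(F_{\textbf{f}}\circ T_{\textbf{v}})(x)=\sum_{x}\langle \mathbb{X}_{st},x\rangle\,F_{\textbf{g}}(x)=\mathbb{Y}^{R}_{st},
\]
which is the desired equality. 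To prove the identity itself I use that $\mathcal{OF}_{\textbf{f}}$ is the \emph{free} D-algebra, so a D-algebra morphism out of it is determined by its values on the single-vertex generators. Both $F_{\textbf{f}}\circ T_{\textbf{v}}$ and $F_{\textbf{g}}$ are D-algebra morphisms: $F_{\textbf{f}}$ preserves the concatenation product and $\graft$ by construction, and $T_{\textbf{v}}$ preserves $\graft$, the Grossman--Larson product $\ast$ and the Lie bracket, hence by functoriality of the Guin--Oudom construction \eqref{eq::GuinOudom1}--\eqref{eq::GuinOudom4} (which reconstructs concatenation and $\ast$ from the post-Lie data on primitives) also the concatenation product; thus the composite preserves concatenation and $\graft$. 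A direct check on the generators, using $T_{\textbf{v}}(\Forest{[i]})=\Forest{[i]}+v_{i}$ and linearity of $F_{\textbf{f}}$, gives $(F_{\textbf{f}}\circ T_{\textbf{v}})(\Forest{[i]})=F_{\textbf{f}}(\Forest{[i]}+v_{i})$, the driving differential operator of the translated equation, matching $F_{\textbf{g}}$ on generators, and freeness closes the argument.

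Since $\mathbb{Y}^{L}_{st}=\mathbb{Y}^{R}_{st}$ as elements of $C^{\infty}(\mathcal{M},U(\mathfrak{g}))$, any $\mathbb{Y}_{st}$ equals the left-hand series if and only if it equals the right-hand series, which is exactly the stated equivalence.

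The step I expect to be the main obstacle is the duality bookkeeping underlying $F_{\textbf{f}}\circ T_{\textbf{v}}=F_{\textbf{g}}$: the translation $T_{\textbf{v}}$ is defined on the completed dual $\overline{\mathcal{H}_{MKW}^{\ast}}$, where the rough paths live, whereas $F_{\textbf{f}}$ acts on the primal $\mathcal{OF}_{\textbf{f}}$, and one must verify that transporting $T_{\textbf{v}}$ across the forest-basis pairing produces a genuine D-algebra endomorphism, preserving concatenation and $\graft$ rather than merely the MKW Hopf structure. The post-Lie morphism property of $T_{\textbf{v}}$ on primitives together with the cointeraction identity $m^{1,3}(\rho_{T}\otimes\rho_{T})\Delta=(\mathrm{id}\otimes\Delta)\rho_{T}$ for the dual coaction $\rho_{T}$ is the structural input ensuring this compatibility; convergence of the infinite sums in the completion is a secondary point handled by the grading.
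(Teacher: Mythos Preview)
The paper does not give its own proof of this proposition; it is imported from \cite{Rahm2021RP} and stated without argument. Your strategy---writing out both formal solutions as Lie--Butcher series and reducing the equality to the D-algebra identity $F_{\textbf{f}}\circ T_{\textbf{v}}=F_{\textbf{g}}$ via freeness of $\mathcal{OF}$---is the natural one and is essentially what the cited reference does.

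There is, however, a genuine inconsistency in your generator check. You set $g_{i}=F_{\textbf{f}}(v_{i})$ to match the second equation as printed, but your own computation gives $(F_{\textbf{f}}\circ T_{\textbf{v}})(\Forest{[i]})=F_{\textbf{f}}(\Forest{[i]}+v_{i})=f_{i}+F_{\textbf{f}}(v_{i})$, which is \emph{not} $g_{i}$. You then assert ``matching $F_{\textbf{g}}$ on generators'' without comment. Either the printed statement carries a typo and the driving fields in the second equation should be $f_{i}+F_{\textbf{f}}(v_{i})$ (which is what $T_{\textbf{v}}(\Forest{[i]})=\Forest{[i]}+v_{i}$ forces), or your proof does not establish the proposition as stated. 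You should flag this explicitly rather than paper over it.

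A smaller point: you need $T_{\textbf{v}}$ to preserve the \emph{concatenation} product, not just $\ast$, for $F_{\textbf{f}}\circ T_{\textbf{v}}$ to be a D-algebra morphism. Your appeal to ``functoriality of Guin--Oudom'' is the right idea but underspecified. The clean statement is that a post-Lie morphism on primitives preserves both the bracket $[\cdot,\cdot]$ and the Grossman--Larson bracket, and since the two associative structures (concatenation and $\ast$) live on the same cocommutative coalgebra, the unique coalgebra morphism extending the map on primitives is simultaneously multiplicative for both. The cointeraction identity you invoke in the last paragraph is not needed for this; it governs the compatibility of $T_{\textbf{v}}$ with the MKW coproduct, which is a different issue.
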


\subsection{Regularity Structures}
\label{ssect:regstruc}

Regularity structures were introduced by M.~Hairer \cite{Hairer2013} to solve rough partial differential equations:
\begin{align}
	u=K \ast F(u, \xi), \label{eq::DiffEqConvolutionForm}
\end{align}
for unknown $u: \mathbb{R}^d \to \mathbb{R}$, $K$ a convolution kernel, $\xi$ some noise and $F$ a non-linearity. Solving the equation using regularity structures requires three main steps:
\begin{enumerate}
	\item First, one constructs a \textit{regularity structure}; a vector space that will allow us to extend the notion of a Taylor expansion, by also including terms involving the noise $\xi$ and in general other functions, rather than only polynomials. This algebraic construction should be sufficiently rich to allow us to express equation \eqref{eq::DiffEqConvolutionForm}.
	\item Formally solve the algebraic equation formulated in step $1$.
	\item Give meaning to the formal solution obtained in step $2$. This is typically done by renormalisation, and requires postulating the value of finitely many ill-defined products. Note that this is similar to postulating the values of integrals as done for rough paths.
\end{enumerate}
The focus of this paper will mainly be on steps $1$ and $3$. It was already shown in Section 4.3 of \cite{Hairer2013} that one can construct a regularity structure from any connected, graded, commutative Hopf algebra. In \cite{BrunedHairerZambotti2019}, the authors constructed a Connes--Kreimer-like Hopf algebra over non-planar rooted trees with decorated vertices and decorated edges, whose regularity structure is useful to express many concrete differential equations. They furthermore obtained a renormalisation procedure for step $3$, by constructing a coaction on this Hopf algebra.  Recall that we constructed the Hopf algebra and the coaction in Section \ref{ssect:trees}. We now give the definition of a regularity structure.

\begin{definition} \label{def::RegularityStructure}
A regularity structure $\mathcal{R}=(A,T,G)$ consists of
\begin{itemize}
\item An index set $A \subset \mathbb{R}$ that is bounded from below and locally finite.
\item A graded vector space $T=\oplus_{\alpha \in A}T_{\alpha}$, with each $T_{\alpha}$ a Banach space. Furthermore, $T_0 \simeq \mathbb{R}$, and we denote the unit vector by $1$.
\item A group $G$ of linear operators acting on $T$, such that for every $\Gamma \in G,$ every $\alpha \in A$ and every $a \in T_{\alpha}$, one has
\begin{align*}
\Gamma a - a \in \oplus_{\beta < \alpha} T_{\beta}.
\end{align*}
\end{itemize}
\end{definition}

We think of the index set $A$ as being the set regularities of the functions/distributions we want to build, and the vector space $T$ as formal Taylor series. We will soon define a \textit{model} for a regularity structure. If an element in $T$ is a formal Taylor series, then a model maps the formal Taylor series to a concrete expansion around a given point. The group $G$ will describe how to re-expand around a different point, often called re-centering or positive renormalisation.

\begin{definition} \label{def::model}
A model for a given regularity structure $\mathcal{R}=(A,T,G)$ on $\mathbb{R}^d$ consists of:
\begin{itemize}
\item A map $\Gamma: \mathbb{R}^d \times \mathbb{R}^d \to G$ such that $\Gamma_{xx}=Id$ and such that $\Gamma_{xy}\Gamma_{yz}=\Gamma_{xz}$ for all $x,y,z \in \mathbb{R}^d$.
\item A collection of continuous linear maps $\Pi_x: T \to S'(\mathbb{R}^d)$ such that $\Pi_y = \Pi_x \Gamma_{xy}$ for every $x,y \in \mathbb{R}^d$.
\end{itemize}

\end{definition}
One typically also requires some analytical bounds on the model, see \cite{Hairer2013} for details. For the purpose of this paper, we will not be considering this.

\subsection{Branched rough paths as a special case of regularity structures}
\label{ssect:roughregstr}

In the language of Section \ref{ssect:Translations}, branched rough paths are rough paths in the BCK Hopf algebra $\mathcal{H}_{BCK}$, which was described in Section \ref{ssect:trees}. Recall from Section \ref{ssect:Translations} that branched rough paths are used to solve differential equations of the form
\begin{align}
dY_t=f_0(Y_t)dt + \sum_{i=1}^n f_i(Y_t)dX_t^i, \quad Y_0=y_0.
\end{align}
Letting $K$ denote the Heaviside kernel, this equation can be written in the form \eqref{eq::DiffEqConvolutionForm} as
\begin{align*}
Y_t=Y_0+K\ast \sum_{i=1}^n f_i(Y_t)X_t^i,
\end{align*}
hence the construction from \cite{BrunedHairerZambotti2019} can be applied to the problem. We will in this subsection outline how, in reference \cite{BrunedChevyrevFrizPreiss2017}, a special case of the construction in the work \cite{BrunedHairerZambotti2019} was recovered from the rough path setting.

\smallskip

Let $\mathcal{B}$ denote the space of non-planar rooted trees whose edges are decorated by elements from the set $\{0,\dots,n\}$, where we will use undecorated edges to represent the $0$ decoration, and where edges decorared by elements from $\{1,\dots,n\}$ are adjacent to leaves. Then there is an isomorphism $\phi$ between $\mathcal{T}_{ \{0,\dots,n\} }$ and $\mathcal{B}$. We will draw trees in $\mathcal{B}$ in blue, to distinguish them from trees in $\mathcal{T}_{\{0,\dots,n\}}$. The isomorphism is given by replacing every non-zero decorated vertex in $\mathcal{T}_{\{0,\dots,n\}}$ with two vertices connected by an edge of the same decoration as the decoration of the original vertex.

\begin{example} The isomorphism between $\mathcal{T}_{\{0,\dots,n\}}$ and $\mathcal{B}$.
\begin{align*}
\phi\Big(\Forest{[1[0][3[1]]]}\Big) 
&= {\color{blue} \Forest{[[][[[,edge label = {node[midway,fill=white,scale=0.5]{1}}]][,edge label = {node[midway,fill=white,scale=0.5]{3}}]][,edge label = {node[midway,fill=white,scale=0.5]{1}}]]}} \\
\phi\Big(\Forest{[2[0[2][5]][1]]}\Big)
&= {\color{blue} \Forest{[[[[,edge label = {node[midway,fill=white,scale=0.5]{2}}]][[,edge label = {node[midway,fill=white,scale=0.5]{5}}]]] [[,edge label = {node[midway,fill=white,scale=0.5]{1}}]] [,edge label = {node[midway,fill=white,scale=0.5]{2}}]]}  }
\end{align*}
\end{example}

\medskip

Let $\alpha_i$ denote the regularity of the path $X^i$ from equation \eqref{eq::BranchedDiffEq} and define the index set $A=\{0\}\cup_i \alpha_i \mathbb{N} \cup_i (\alpha_i \mathbb{N}-1)$. We associate a degree in $A$ to each tree $\tau \in\mathcal{B}$, as the sum over all edges in $\tau$ where an edge decorated by $i$ contributes $\alpha_i-1$ and undecorated edges contribute $1$.

Recall the root-adding map $B_+$ that sends forests to trees, and the root-removing map $B_-$ that sends  trees to forests. Define on $\mathcal{B}$ the tree product $\tau_1\tau_2:=B_+(B_-(\tau_1)B_-(\tau_2))$, i.e., by identifying $\tau_1\tau_2$ with the tree obtained by joining the roots of $\tau_1$ and $\tau_2$. We write $\mathcal{B}_-$ for the free commutative associative algebra generated by trees of negative degree, where we interpret the product as the forest product. Write $\mathcal{B}_+$ for the vector space spanned by trees of positive degree where no decorated edge is adjacent to the root, together with the tree product.

Using the isomorphism $\phi$ between $\mathcal{T}_{ \{0,\dots,n\} }$ and $\mathcal{B}$, any tree $\tau \in \mathcal{B}_+$ can be written as $\tau= B_+(\phi(\omega))$, where $\omega \in \mathcal{F}$. Furthermore, if $\tau$ is an arbitrary tree in $\mathcal{B}$, then $B_+(\tau)\in \mathcal{B}_+$ and can be written in this form. We use this fact to construct a regularity structure in the following theorem.

\begin{theorem}[\cite{BrunedChevyrevFrizPreiss2017}] \label{thm::ModelFromRoughPath}
Let $\mathbb{X}_{st}$ be a branched rough path over the (time-augmented) path $(t,X_t^1,\dots,X_t^m)$ that satisfies
\begin{align*}
\langle \mathbb{X}_{st},\tau \graft \bullet \rangle = \int_s^t \langle \mathbb{X}_{su},\tau \rangle du,
\end{align*}
for $\tau \in \mathcal{T}$. Define a map $\Pi: \mathbb{R}^2 \times \mathcal{B} \to \mathbb{R}$ by
\begin{align*}
\Pi_s( B_+ (\phi(\tau)) )(t)=&\langle \mathbb{X}_{st},\tau\rangle,\\
\Pi_s( \tau )(t)=&\frac{d}{dt}\Pi_s(B_+ (\tau) )(t).
\end{align*}
Define a map $\Gamma : \mathbb{R}^2 \times \mathcal{B}^{\ast} \to \mathcal{B}^{\ast}$ by
\begin{align*}
\Gamma_{st}(B_+(\phi(\tau))) =& (\mathbb{X}_{st} \otimes B_+ \circ \phi)\Delta_{CK}(\tau),\\
\Gamma_{st}(\tau) =& (\mathbb{X}_{st} \otimes \phi)(\Delta_{CK}(\phi^{-1}(\tau)) - \phi^{-1}(\tau) \otimes 1 ).
\end{align*}
Then $(A,\mathcal{B},G)$ is a regularity structure and $(\Pi,\Gamma)$ is a model over the regularity structure, where $G$ is the group generated by $\Gamma_g = (g\otimes id)\Delta^+$for $g$ in $\mathcal{B}_+^{\ast}$.
\end{theorem}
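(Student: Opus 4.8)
The plan is to verify the two assertions separately: first that the triple $(A,\mathcal{B},G)$ meets the three requirements of Definition \ref{def::RegularityStructure}, and then that the pair $(\Pi,\Gamma)$ meets the two requirements of Definition \ref{def::model}. Throughout, the two facts doing all the work are the co-triangularity of the recentering coproduct $\Delta^+$ with respect to the degree grading on $\mathcal{B}$, and Chen's identity $\mathbb{X}_{su}\ast\mathbb{X}_{ut}=\mathbb{X}_{st}$ from Definition \ref{def::RoughPath}, which is dual to the coassociativity of $\Delta_{CK}$.

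For the regularity structure, I would first check that $A=\{0\}\cup_i\alpha_i\mathbb{N}\cup_i(\alpha_i\mathbb{N}-1)$ is bounded below and locally finite: since each $\alpha_i\in(0,1]$, an undecorated edge contributes $1$ and a decorated edge contributes $\alpha_i-1\in(-1,0]$, and because decorated edges are adjacent only to leaves, the degree of any tree is a finite sum whose negative part is controlled by the number of leaves, giving a uniform lower bound and discreteness. The grading $\mathcal{B}=\oplus_{\alpha\in A}\mathcal{B}_\alpha$ is then by total degree, with $\mathcal{B}_0\simeq\mathbb{R}$ spanned by the single-vertex tree, which plays the role of the unit $1$. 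For the structure group, I would show that the maps $\Gamma_g=(g\otimes\mathrm{id})\Delta^+$, $g\in\mathcal{B}_+^\ast$, are closed under composition and invertible: coassociativity of $\Delta^+$ gives $\Gamma_g\Gamma_h=\Gamma_{h\star g}$ for the convolution product $\star$ on $\mathcal{B}_+^\ast$, so $G$ is the image of a group of characters and is itself a group. The upper-triangularity $\Gamma_g a-a\in\oplus_{\beta<\alpha}\mathcal{B}_\beta$ for $a\in\mathcal{B}_\alpha$ then follows from $\Delta^+\tau=\tau\otimes 1+(\text{terms with second leg of strictly smaller degree})$, a standard feature of Connes--Kreimer--type coproducts.

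For the model, $\Gamma_{st}$ lies in $G$ because $\mathbb{X}_{st}$ is a character of $\mathcal{H}_{BCK}$, hence (after transport through $\phi$ and $B_+$) a functional on $\mathcal{B}_+$. The normalisation $\Gamma_{ss}=\mathrm{Id}$ is immediate from $\mathbb{X}_{ss}=\epsilon$, and the cocycle relation $\Gamma_{st}\Gamma_{tu}=\Gamma_{su}$ is exactly Chen's identity read through the composition law $\Gamma_g\Gamma_h=\Gamma_{h\star g}$ established above. It then remains to prove the covariance $\Pi_t=\Pi_s\Gamma_{st}$. On a planted tree $\tau=B_+(\phi(\omega))$ this is a direct computation: expanding $\Gamma_{st}\tau=\sum\langle\mathbb{X}_{st},\omega^{(1)}\rangle B_+(\phi(\omega^{(2)}))$ with $\Delta_{CK}\omega=\sum\omega^{(1)}\otimes\omega^{(2)}$, applying $\Pi_s$, and collapsing the resulting two-sided evaluation of $\mathbb{X}$ by Chen's identity reproduces $\langle\mathbb{X}_{tr},\omega\rangle=\Pi_t(\tau)(r)$. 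For a general (non-planted) tree I would descend along the derivative relation $\Pi_s(\tau)(t)=\tfrac{d}{dt}\Pi_s(B_+\tau)(t)$, using the integral identity $\langle\mathbb{X}_{st},\tau\graft\bullet\rangle=\int_s^t\langle\mathbb{X}_{su},\tau\rangle\,du$ of the hypothesis to commute $B_+$ (that is, integration against the Heaviside kernel) past the rough path evaluation; differentiation then transfers the already-established planted-tree covariance to $\tau$. The same identity also shows that $\Pi_s$ takes values in $S'(\mathbb{R})$, since the $\Pi_s(B_+\,\cdot\,)$ are continuous (indeed H\"older) functions whose distributional derivatives make sense.

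The main obstacle is the covariance identity $\Pi_t=\Pi_s\Gamma_{st}$ on non-planted trees, where three structures must interact simultaneously: the recursive, derivative-based definition of $\Pi$, the integral identity relating $B_+$ to integration, and Chen's identity. I expect the cleanest route is an induction on the number of edges of $\tau$, reducing every tree to a planted one by a single application of $B_+$, handling the planted case by the coproduct computation above, and transporting the result back through one differentiation; keeping the bookkeeping of the $\mathbb{X}_{st}$-versus-$\mathbb{X}_{ts}$ (inverse) leg consistent across the two lines of the definition of $\Gamma_{st}$ is where the care is needed.
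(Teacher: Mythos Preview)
The paper does not give its own proof of this theorem; it is quoted from \cite{BrunedChevyrevFrizPreiss2017} as a known result. What the paper does prove is the planar analogue in Section~\ref{sec::PlanarRoughPathsAreRegularityStructures} (Lemma preceding and Theorem following Lemma~\ref{lemma::ModelOnIdentityIsConstant}), and your outline matches that argument essentially step for step: the cocycle $\Gamma_{xy}\Gamma_{yz}=\Gamma_{xz}$ from Chen's identity, the covariance $\Pi_x\Gamma_{xy}=\Pi_y$ on planted trees by a direct coproduct/Chen computation, and then the reduction for general trees via one application of $B_+$ and one differentiation.

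One small sharpening relative to the paper's mechanism: in the non-planted case you speak of ``commuting $B_+$ past the rough path evaluation'' via the integral hypothesis and frame it as an induction on edge number. In the paper's planar proof there is no induction; instead one uses the identity
\[
B_+\bigl(\Gamma_{xy}\tau\bigr)=\Gamma_{xy}\bigl(B_+\tau\bigr)-\langle\mathbb{X}_{yx},\,\cdot\,\rangle\,\mathbf{1},
\]
i.e.\ $B_+$ intertwines with $\Gamma$ up to a constant multiple of the unit tree, and then kills that extra term by the observation $\Pi_x(\mathbf{1})(t)\equiv 1$ (Lemma~\ref{lemma::ModelOnIdentityIsConstant}, which is where time-augmentation, rather than the integral identity, is actually used). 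Your sketch is correct, but this concrete identity is the clean way to execute the ``non-planted'' step and avoids any bookkeeping with the $st$/$ts$ legs you flag as delicate.
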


Both in defining $\Pi$ and defining $\Gamma$, we need two equalities. One equality to define the maps on trees with undecorated root, and a second equality to define what happens for trees with decorated root. Note that the second equalities also makes sense for trees where the root is not decorated. It is shown in \cite{BrunedChevyrevFrizPreiss2017} that the two equalities are consistent with each other in these cases. This construction of $\Gamma$ furthermore agrees with the more general construction:
\begin{align*}
\Gamma_{st}=(\gamma_{ts} \otimes id)\Delta^+,
\end{align*}
where $\Delta^+$ is the Connes--Kreimer-like coproduct used for regularity structures, and $\gamma_{st}=B_+(\phi(\mathbb{X}_{st}))$. We sketched the construction of the dual product $\ast_+$ to $\Delta_+$ at the end of \ref{ssect:trees}.

We will now describe how the renormalisation coaction can be obtained from the cotranslation coaction. Let $\ell \in \mathcal{B}_-^{\ast}$ be a character and denote by $M_{\ell}$ renormalisation by $\ell$, in the sense of regularity structures, this can be described by a coaction $\Delta^-$:
\begin{align*}
M_{\ell}(\tau)=(\ell \otimes id)\Delta^-(\tau).
\end{align*}
For the general case where regularity structure trees have decorated edges and vertices, we sketched the construction of the dual product to $\Delta^-$ at the end of Section \ref{ssect:trees}. One uses the renormalisation map to obtain a renormalised model by:
\begin{align*}
\Pi_s^{M_{\ell}}=&\Pi_s M_{\ell},\\
\Gamma_{st}{M_\ell}=&(\gamma_{st}M_{\ell} \otimes id)\Delta^+.
\end{align*}

Let $\rho_T^{np}: \mathcal{T}_{ \{0,\dots,n\} } \to \mathcal{F}_{\{0,\dots,n\}} \otimes \mathcal{T}_{\{0,\dots,n\}}$ be the time-cotranslation coaction for branched rough paths. Let $\pi^-$ be projection onto trees of negative degree. Denote $\delta^-=(\pi^- \otimes id)\rho_T^{np}$. Then:
\begin{align*}
\Delta^-(\phi(\tau))=&(\phi \otimes \phi)\delta^-(\tau),\\
M_{\ell}(\phi(\tau))=&\phi(T^{\ast}_{\log(\ell)}(\tau)),
\end{align*}
where $T^{\ast}$ is dual to translation, and $\log(\ell)$ is the primitive element that generates the character $\ell$.

\section{Planarly branched rough paths as regularity structures} 
\label{sec::PlanarRoughPathsAreRegularityStructures}

In this section we will generalize the construction from \cite{BrunedChevyrevFrizPreiss2017}, that saw branched rough paths as a special case of regularity structures, and apply it to planarly branched rough paths.\\ 

Let $Lie(\mathcal{PT})$ be the Lie algebra over the vector space of planar rooted trees with vertices decorated by $\{0,\dots,n\}$. Let $\mathcal{PB}$ denote the vector space of planar rooted trees whose edges are decorated by the set $\{0,\dots,n\}$, where we will use undecorated edges to represent the $0$ decoration, and where edges decorated by $\{1,\dots,n\}$ are adjacent to leaves. Let $Lie(\mathcal{PB})$ denote the Lie algebra over $\mathcal{PB}$. We will draw trees in $Lie(\mathcal{PB})$ in blue, to differentiate them from trees in $Lie(\mathcal{PT})$. There is an isomorphism $\varphi$ between $Lie(\mathcal{PT})$ and $Lie(\mathcal{PB})$ given by replacing every non-zero vertex in $Lie(\mathcal{PT})$ with $2$ vertices connected by a rightmost edge of the same decoration as the original vertex.

\begin{example} The isomorphism between $Lie(\mathcal{PT})$ and $Lie(\mathcal{PB})$.
\begin{align*}
\varphi\Big(\Forest{[1[0][3[1]]]}\Big) 
&= {\color{blue} \Forest{[[][[[,edge label = {node[midway,fill=white,scale=0.5]{1}}]][,edge label = {node[midway,fill=white,scale=0.5]{3}}]][,edge label = {node[midway,fill=white,scale=0.5]{1}}]]}} \\
\varphi\Big( [\Forest{[2[1]]},\Forest{[1[1][0]]} ]\Big)
&= \Big[{\color{blue}\Forest{[[[,edge label = {node[midway,fill=white,scale=0.5]{1}}]][,edge label = {node[midway,fill=white,scale=0.5]{2}}]]}   },{\color{blue} \Forest{[[[,edge label = {node[midway,fill=white,scale=0.5]{1}}]][][,edge label = {node[midway,fill=white,scale=0.5]{1}}]]} } \Big].
\end{align*}
\end{example}

Define a map $B_+ : Lie(\mathcal{PB})\to \mathcal{PB}$ by $B_+(\tau)=\tau \graft {\color{blue} \Forest{[]} }$, where $\graft$ is left grafting. Let $\mathcal{OF}$ denote the planar forests with vertices decorated by $\{0,\dots,n\}$ and let $\mathcal{OB}$ denote the planar forests with edges decorated by $\{0,\dots,n\}$. Extend $\varphi: \mathcal{OF} \to \mathcal{OB}$ to be a shuffle morphism. Define the tree product on $\mathcal{PB}$ by $\tau_1\tau_2$ being the tree obtained by joining the roots of $\tau_1$ and $\tau_2$ and shuffling the branches. Extend $B_+: \mathcal{OB} \to \mathcal{PB}$ by $B_+(\tau_1 \shuffle \cdots \shuffle \tau_n)=B_+(\tau_1)\cdots B_+(\tau_n)$, where the product on the right side is the tree product. This amounts to the natural extension of grafting everything on a new root, and arranging the planar order of the branches according to the order of the roots in the forest.\\

\begin{example}
The extended maps $B_+$ and $\varphi$.
\begin{align*}
B_+\Big(\varphi\big( \Forest{[1[0]]}\Forest{[0 [3] [4]]} \big)\Big)
&=B_+\Big({\color{blue} \Forest{[[][,edge label = {node[midway,fill=white,scale=0.5]{1}}]]}\Forest{[ [[,edge label = {node[midway,fill=white,scale=0.5]{3}}]] [[,,edge label = {node[midway,fill=white,scale=0.5]{4}}]]]}}\Big)\\
&={\color{blue}\Forest{[[[][,edge label = {node[midway,fill=white,scale=0.5]{1}}]]  [ [[,edge label = {node[midway,fill=white,scale=0.5]{3}}]] [[,,edge label = {node[midway,fill=white,scale=0.5]{4}}]]]   ]}}.
\end{align*}
\end{example}

Let $\alpha_i$ denote the regularity of the path $X$ from equation \eqref{eq::PlanarlyBranchedEquation} and define the index set $A=\{0\}\cup_i \alpha_i \mathbb{N} \cup_i (\alpha_i \mathbb{N}-1)$. We associate a degree in $A$ to each tree $\tau \in Lie(\mathcal{PB})$, as the sum over all edges in $\tau$ where decorated edges contribute $\alpha_i-1$ and undecorated edges contribute $1$. We write $S(\mathcal{PB})_-$ for the free commutative associative algebra generated by trees of negative degree. Write $\mathcal{PB}_+$ for the vector space spanned by trees of positive degree, together with the tree product. Denote by $\mathcal{G}_+$ the group of characters over $\mathcal{PB}_+$.

\begin{remark}
The space $S(\mathcal{PB})_-$ does not contain any Lie brackets. One should think of this in terms of negative trees representing distributions and positive trees representing vector fields. The fact that Lie brackets appear for positive trees reflects the existence of the Jacobi--Lie bracket of vector fields, which does not exist for distributions.\\ 
In the dictionary between rough path trees and regularity structure trees, an extra root is added via $B_+$ to the regularity structure trees compared to the rough path forests. The "trees" in $\mathcal{PB}_+$ are the trees with only one edge adjacent to the root and the "Lie polynomials" are those trees where the edges adjacent to the root form a Lie polynomial.
\end{remark}

Define a coaction $\Delta^+: \mathcal{PB} \to \mathcal{PB}_+ \otimes \mathcal{PB}$ by
\begin{align*}
\Delta^+( B_+(\varphi(\omega)) )=(B_+\circ \varphi \otimes B_+ \circ \varphi  )\Delta_{MKW}(\omega),
\end{align*}
for $\omega \in \mathcal{OF}$. This defines $\Delta^+$ for all input trees that have no decorated edge adjacent to the root. For input trees $\omega \in \mathcal{PB}$ that have a decorated edge adjacent to the root, define:
\begin{align*}
	\Delta^+(\omega)=(id \otimes B_-)(\Delta^+(B_+(\omega))-B_+(\omega) \otimes {\color{blue}\Forest{[]}}).
\end{align*}
One can describe this coaction with left admissible cuts, where the cut edges are undecorated. The pruned parts are shuffled and then grafted onto a common root.

\begin{example}
	\begin{align*}
		&\Delta^+\Big({\color{blue}\Forest{[[[][,edge label = {node[midway,fill=white,scale=0.5]{1}}]]  [ [[,edge label = {node[midway,fill=white,scale=0.5]{3}}]] [[,,edge label = {node[midway,fill=white,scale=0.5]{4}}]]]   ]}}\Big) 
		={\color{blue}\Forest{[[[][,edge label = {node[midway,fill=white,scale=0.5]{1}}]]  [ [[,edge label = {node[midway,fill=white,scale=0.5]{3}}]] [[,,edge label = {node[midway,fill=white,scale=0.5]{4}}]]]   ]}} \otimes {\color{blue}\Forest{[]}} + {\color{blue}\Forest{[]}} \otimes {\color{blue}\Forest{[[[][,edge label = {node[midway,fill=white,scale=0.5]{1}}]]  [ [[,edge label = {node[midway,fill=white,scale=0.5]{3}}]] [[,,edge label = {node[midway,fill=white,scale=0.5]{4}}]]]   ]}}\\
		&+ {\color{blue} \Forest{[[]]} } \otimes {\color{blue}\Forest{[[[,edge label = {node[midway,fill=white,scale=0.5]{1}}]]  [ [[,edge label = {node[midway,fill=white,scale=0.5]{3}}]] [[,,edge label = {node[midway,fill=white,scale=0.5]{4}}]]]   ]}} 
		+ {\color{blue} \Forest{[[[][,edge label = {node[midway,fill=white,scale=0.5]{1} }]]]} }\otimes {\color{blue}\Forest{[[ [[,edge label = {node[midway,fill=white,scale=0.5]{3}}]] [[,,edge label = {node[midway,fill=white,scale=0.5]{4}}]]]   ]}}
		+ {\color{blue} \Forest{[[[,edge label = {node[midway,fill=white,scale=0.5]{3}}]]]} } \otimes {\color{blue}\Forest{[[[][,edge label = {node[midway,fill=white,scale=0.5]{1}}]]  [[[,edge label = {node[midway,fill=white,scale=0.5]{4}}]]]   ]}}
		+\Big({\color{blue} \Forest{[ [] [[,edge label = {node[midway,fill=white,scale=0.5]{3}}]] ]}} 
		+ {\color{blue}\Forest{[ [[,edge label = {node[midway,fill=white,scale=0.5]{3}}]][] ]} }\Big)\otimes {\color{blue}\Forest{[[[,edge label = {node[midway,fill=white,scale=0.5]{1}}]]  [ [[,,edge label = {node[midway,fill=white,scale=0.5]{4}}]]]   ]}}\\
		&+\Big({\color{blue} \Forest{[ [[][,edge label={node[midway,fill=white,scale=0.5]{1}}]] [[,edge label = {node[midway,fill=white,scale=0.5]{3}}]] ]} } + {\color{blue} \Forest{ [ [[,edge label = {node[midway,fill=white,scale=0.5]{3}}]] [[][,edge label = {node[midway,fill=white,scale=0.5]{1}}]] ] } } \Big) \otimes {\color{blue}\Forest{[[[,,edge label = {node[midway,fill=white,scale=0.5]{4}}]]]   ]}} + {\color{blue} \Forest{[[[,edge label = {node[midway,fill=white,scale=0.5]{3}}]][[,edge label = {node[midway,fill=white,scale=0.5]{4}}]]]} } \otimes {\color{blue}\Forest{[[[][,edge label = {node[midway,fill=white,scale=0.5]{1}}]]  []   ]}}\\
		&+\Big({\color{blue} \Forest{ [ [] [[,edge label = {node[midway,fill=white,scale=0.5]{3}}]][[,edge label = {node[midway,fill=white,scale=0.5]{4}}]]  ] } } + {\color{blue} \Forest{[ [[,edge label = {node[midway,fill=white,scale=0.5]{3}}]] [] [[,edge label = {node[midway,fill=white,scale=0.5]{4}}]] ]} } + {\color{blue} \Forest{[[[,edge label = {node[midway,fill=white,scale=0.5]{3}}]][[,edge label = {node[midway,fill=white,scale=0.5]{4}}]][]]} } \Big)\otimes {\color{blue}\Forest{[[[,edge label = {node[midway,fill=white,scale=0.5]{1}}]]  []   ]}}\\
		&+\Big({\color{blue} \Forest{[ [[][,edge label = {node[midway,fill=white,scale=0.5]{1}}]] [[,edge label = {node[midway,fill=white,scale=0.5]{3}}]][[,edge label = {node[midway,fill=white,scale=0.5]{4}}]]]} }+{\color{blue} \Forest{[[[,edge label = {node[midway,fill=white,scale=0.5]{3}}]] [[][,edge label={node[midway,fill=white,scale=0.5]{1}}]] [[,edge label = {node[midway,fill=white,scale=0.5]{4}}]]]} } + {\color{blue} \Forest{[[[,edge label = {node[midway,fill=white,scale=0.5]{3}}]][[,edge label = {node[midway,fill=white,scale=0.5]{4}}]] [[][,edge label = {node[midway,fill=white,scale=0.5]{1}}]] ]} }\Big) \otimes {\color{blue} \Forest{[[]]} }.
	\end{align*}
\end{example}

Let $g \in \mathcal{G}_+$, the group of characters over $\mathcal{PB}_+$, and define
\begin{align*}
\Gamma_g = (g \otimes id)\Delta^+.
\end{align*}
Denote $\mathcal{G}=\{\Gamma_g : g \in \mathcal{G}_+  \}$. Note that $\mathcal{G}$ is a group with product $\Gamma_g\Gamma_h=\Gamma_{h \circ g}$, where $\circ$ is the product in $\mathcal{G}_+$, that is dual to $\Delta^+$.

\begin{lemma}
The triple $(A,\mathcal{PB},\mathcal{G})$ is a regularity structure.
\end{lemma}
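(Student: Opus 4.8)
The plan is to verify the three clauses of Definition \ref{def::RegularityStructure} in turn, grading $\mathcal{PB}$ by the degree map introduced above. Clause one is immediate: since each $\alpha_i>0$ and $\mathbb{N}$ starts at $0$, every set $\alpha_i\mathbb{N}$ and $\alpha_i\mathbb{N}-1$ is a discrete arithmetic progression contained in $[-1,\infty)$, so their finite union $A$ is bounded below and meets every bounded interval in finitely many points, i.e.\ is locally finite. For clause two I would set $T=\mathcal{PB}$ with $T_\alpha$ the span of the trees of degree $\alpha$; one checks by induction on the number of edges (using that decorated edges occur only at leaves) that the degree of every tree lies in $A$, and that there are finitely many trees of each degree, so each $T_\alpha$ is finite-dimensional and hence Banach. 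The degree-zero component is spanned by the single-vertex tree, giving $T_0\simeq\mathbb{R}$. These are the routine combinatorial verifications.

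The substance of the lemma is clause three, the triangularity of the operators $\Gamma_g=(g\otimes\mathrm{id})\Delta^+$. First I would record, in Sweedler notation $\Delta^+(\tau)=\tau_{(1)}\otimes\tau_{(2)}$, that
\begin{align*}
\Gamma_g(\tau)=g(\tau_{(1)})\,\tau_{(2)}=\tau+\sum{}' g(\tau_{(1)})\,\tau_{(2)},
\end{align*}
where the isolated term $\tau$ comes from the co-unit component $\,{\color{blue}\Forest{[]}}\otimes\tau$ of $\Delta^+$ (using $g({\color{blue}\Forest{[]}})=1$, since $g$ is a character and the single-vertex tree is the unit of $\mathcal{PB}_+$), and the primed sum ranges over the remaining terms, in which $\tau_{(1)}\in\mathcal{PB}_+$ has strictly positive degree. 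Granting this description, triangularity is automatic: each surviving $\tau_{(2)}$ satisfies $\deg\tau_{(2)}=\deg\tau-\deg\tau_{(1)}<\deg\tau$, so $\Gamma_g(\tau)-\tau\in\bigoplus_{\beta<\deg\tau}T_\beta$, as required.

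Thus everything reduces to two properties of $\Delta^+$: that it is compatible with the degree, $\deg\tau_{(1)}+\deg\tau_{(2)}=\deg\tau$ for every term, and that its only term with degree-zero left leg is the co-unit term ${\color{blue}\Forest{[]}}\otimes\tau$. Both are most transparent through the left-admissible-cut description of $\Delta^+$ recorded after its definition: a left admissible cut removes a set of undecorated edges, the pruned forests are shuffled and grafted onto a fresh common root, and the trunk is retained. Each cut edge contributes $+1$ to $\tau$ and is replaced by a single connecting edge (also undecorated, contributing $+1$) joining the pruned part to the new root, while decorated edges are never cut; hence the total degree is conserved and splits additively across the tensor factors. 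Moreover any nonempty cut produces a left leg carrying at least that connecting edge, which places it in $\mathcal{PB}_+$ with positive degree, leaving ${\color{blue}\Forest{[]}}\otimes\tau$ as the unique co-unit term. Finally, that $\mathcal{G}$ is a group of linear operators is already established in the text: the assignment $g\mapsto\Gamma_g$ turns the group $\mathcal{G}_+$ into $\mathcal{G}$ via $\Gamma_g\Gamma_h=\Gamma_{h\circ g}$, so in particular each $\Gamma_g$ is invertible.

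The step I expect to be the main obstacle is the degree bookkeeping for $\Delta^+$, i.e.\ turning the informal cut picture into a clean proof that $\deg\tau_{(1)}+\deg\tau_{(2)}=\deg\tau$. Care is needed because $\Delta^+$ is defined indirectly, through $(B_+\circ\varphi\otimes B_+\circ\varphi)\Delta_{MKW}$ on trees with undecorated root and through the $B_+$/$B_-$ correction for trees with a decorated edge adjacent to the root; one must check that the grading induced on $\mathcal{OF}$ behind $\Delta_{MKW}$ matches the edge-weighted degree on $\mathcal{PB}$ under $B_+\circ\varphi$, and that the correction clause (which lowers each right leg by one via $B_-$ and deletes the root-trunk term) does not disturb the bookkeeping. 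Once this compatibility is in place, the triangular normalization, and hence the whole lemma, follow as above.
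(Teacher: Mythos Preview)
The paper states this lemma without proof, so there is no argument to compare against directly. Your approach is the natural one and is essentially correct: the triangularity of $\Gamma_g$ is exactly the consequence of degree additivity of $\Delta^+$ together with the fact that the left tensor leg lands in $\mathcal{PB}_+$, and your reduction to these two properties is the right strategy.

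One point deserves tightening. Your justification that a nonempty cut yields a strictly positive left leg (``carrying at least that connecting edge'') is not sufficient on its own, because the pruned subtrees may carry decorated edges, each contributing $\alpha_i-1<0$, which could in principle overwhelm the $+1$ from the new root edge. The clean argument goes through the dictionary with $\mathcal{OF}$: if $\omega'\in\mathcal{OF}$ is a nonempty forest with $n_0$ vertices decorated by $0$ and $n_i$ vertices decorated by $i\neq 0$, then counting edges shows
\[
\deg\bigl(B_+(\varphi(\omega'))\bigr)\;=\;n_0+\sum_{i\neq 0} n_i\,\alpha_i\;>\;0.
\]
This simultaneously confirms that $\Delta^+$ genuinely takes values in $\mathcal{PB}_+\otimes\mathcal{PB}$, that the empty cut ($\omega'=1$) is the unique term with degree-zero left leg, and (since the displayed quantity equals the $\mathcal{H}_{MKW}$-grading of $\omega'$) that the edge-weighted degree on $\mathcal{PB}$ is exactly the image under $B_+\circ\varphi$ of the vertex-count grading on $\mathcal{OF}$. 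That last observation also handles the compatibility concern you flag in your final paragraph: $\Delta_{MKW}$ respects the vertex-count grading, hence $\Delta^+$ respects the edge-weighted degree, and the $B_+/B_-$ correction clause for trees with a decorated root edge merely shifts both sides by one undecorated edge, preserving additivity.
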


We are now ready to define maps $\Pi: \mathbb{R}^2 \times \mathcal{PB} \to \mathbb{R}$ and $\Gamma: \mathbb{R}^2 \times \mathcal{PB}^{\ast} \to \mathcal{PB}^{\ast}$. Let $\mathbb{X}_{st}$ be a planarly branched rough path over the time-augmented path $(t,X_t^1,\dots,X_t^m)$. Let $\omega \in \mathcal{OF}$, define $\Pi$ by
\begin{align*}
\Pi_s(B_+(\varphi(\omega)))(t)=&\langle \mathbb{X}_{st},\omega\rangle.
\end{align*}
This defines $\Pi$ for all input trees where no decorated edge is adjacent to the root. Let $\omega \in \mathcal{PB}$ have a decorated edge adjacent to the root and define
\begin{align*}
	\Pi_s(\omega)(t)=\frac{d}{dt}\Pi_s(B_+(\omega))(t).
\end{align*}

\begin{proposition}
	If the time-augmented planarly rough path $\mathbb{X}_{st}$ satisfies 
	\begin{align}
		\langle \mathbb{X}_{st},\omega \graft \Forest{[0]} \rangle = \int_s^t \langle \mathbb{X}_{su},\omega \rangle du, \label{eq::EdgesAreIntegration}
	\end{align}
	then the model satisfies
	\begin{align*}
		\frac{d}{dt}\Pi_x( B_+(\varphi(\omega))(t)=\Pi_x(\varphi(\omega))(t),
	\end{align*}
	for all $\omega \in \mathcal{OF}$.
\end{proposition}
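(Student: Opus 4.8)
The plan is to reduce both sides of the claimed identity to quantities attached to the rough path $\mathbb{X}$, and then to apply \eqref{eq::EdgesAreIntegration} together with the fundamental theorem of calculus. The organising observation is the combinatorial identity
\[
B_+(\varphi(\omega)) = \varphi(\omega\graft\Forest{[0]}),
\]
valid for every $\omega\in\mathcal{OF}$: grafting $\omega$ onto the single vertex $\Forest{[0]}$ yields the tree whose root is $0$-decorated with the trees of $\omega$ as its branches, and since $\varphi$ leaves $0$-decorated vertices untouched (turning them into undecorated blue vertices), this agrees with the tree obtained by grafting the forest $\varphi(\omega)$ onto a fresh undecorated root, i.e. with $B_+(\varphi(\omega))$. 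In particular $B_+(\varphi(\omega))$ has only undecorated edges at its root, so the first clause in the definition of $\Pi$ applies and gives $\Pi_x(B_+(\varphi(\omega)))(t)=\langle\mathbb{X}_{xt},\omega\rangle$. Thus the left-hand side equals $\tfrac{d}{dt}\langle\mathbb{X}_{xt},\omega\rangle$, and everything reduces to identifying $\Pi_x(\varphi(\omega))(t)$ with this derivative.

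I would then distinguish cases according to which clause of the definition of $\Pi$ governs $\varphi(\omega)$, that is, according to whether $\varphi(\omega)$ carries a decorated edge at its root. Under the isomorphism $\varphi$ this happens exactly when the root of $\omega$ has a nonzero decoration. In that case the second clause applies verbatim and reads $\Pi_x(\varphi(\omega))(t)=\tfrac{d}{dt}\Pi_x(B_+(\varphi(\omega)))(t)$, which is already the assertion; note that \eqref{eq::EdgesAreIntegration} plays no role here.

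The substantive case is when $\varphi(\omega)$ has no decorated edge at the root, equivalently when the root of $\omega$ is $0$-decorated. Writing $\omega=\omega''\graft\Forest{[0]}$ with $\omega''$ the forest of branches hanging from the root, the identity above gives $\varphi(\omega)=B_+(\varphi(\omega''))$, so the first clause yields $\Pi_x(\varphi(\omega))(t)=\langle\mathbb{X}_{xt},\omega''\rangle$. On the other hand \eqref{eq::EdgesAreIntegration}, applied to $\omega''$, gives $\langle\mathbb{X}_{xt},\omega\rangle=\langle\mathbb{X}_{xt},\omega''\graft\Forest{[0]}\rangle=\int_x^t\langle\mathbb{X}_{xu},\omega''\rangle\,du$; differentiating in $t$ by the fundamental theorem of calculus returns $\langle\mathbb{X}_{xt},\omega''\rangle$. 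Chaining the equalities, $\tfrac{d}{dt}\Pi_x(B_+(\varphi(\omega)))(t)=\tfrac{d}{dt}\langle\mathbb{X}_{xt},\omega\rangle=\langle\mathbb{X}_{xt},\omega''\rangle=\Pi_x(\varphi(\omega))(t)$, as required.

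I expect the main obstacle to be bookkeeping rather than a genuine analytic difficulty. Two points need care. First, the combinatorial identity $B_+(\varphi(\omega))=\varphi(\omega\graft\Forest{[0]})$ must be verified when $\omega$ is a genuine ordered forest rather than a single tree; here one should expand $\omega\graft\Forest{[0]}$ through the Guin--Oudom rule \eqref{eq::GuinOudom3} and check that the terms grafting a tree into a proper branch cancel, leaving only the term in which all branches are attached directly to the new root in their original order. Second, for a forest $\omega$ the element $\varphi(\omega)$ is again a forest, on which $\Pi$ is not literally defined; the clean statement is for the single-tree generators of $\mathcal{OF}$ (on which $\Pi$ acts), and the general case is either read off these or handled by the multiplicativity of the model. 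The only analytic input is mild: \eqref{eq::EdgesAreIntegration} already exhibits $\langle\mathbb{X}_{x\cdot},\omega\rangle$ as the integral of a continuous integrand, so the $t$-derivative exists and the fundamental theorem of calculus applies.
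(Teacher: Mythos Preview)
Your proof is correct and follows essentially the same approach as the paper: both split into two cases according to whether $\varphi(\omega)$ carries a decorated edge at its root, invoke the second clause of the definition of $\Pi$ in the decorated-root case, and in the remaining case write $\varphi(\omega)=B_+(\varphi(\omega'))$ for some forest $\omega'$ and apply \eqref{eq::EdgesAreIntegration} together with the fundamental theorem of calculus. Your exposition is in fact slightly more explicit than the paper's---you spell out the organising identity $B_+(\varphi(\omega))=\varphi(\omega\graft\Forest{[0]})$ and you flag the bookkeeping issues (the Guin--Oudom cancellation for genuine forests, and the fact that $\Pi$ is literally defined on trees rather than forests), which the paper leaves implicit.
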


\begin{proof}
	Suppose first that there exists an $\omega'$ such that $\varphi(\omega)=B_+(\varphi(\omega'))$, then:
	\begin{align*}
		\frac{d}{dt}\Pi_x( B_+(\varphi(\omega))(t)=&\frac{d}{dt}\Pi_x\bigl( B_+(B_+(\varphi(\omega'))) \bigr)\\
		=&\frac{d}{dt}\Pi_x\bigl(B_+(\varphi(B_+(\omega')   )  )  \bigr)\\
		=&\frac{d}{dt}\langle \mathbb{X}_{xt},\omega' \graft \bullet \rangle \\
		=&\frac{d}{dt}\int_x^t \langle \mathbb{X}_{xu},\omega' \rangle du \\
		=&\langle \mathbb{X}_{xt},\omega' \rangle \\
		=&\Pi_x\bigl(B_+(\varphi(\omega') )  \bigr)(t)\\
		=&\Pi_x\bigl(\varphi(B_+(\omega'))  \bigr)(t)\\
		=&\Pi_x\bigl(\varphi(\omega)\bigr)(t).
	\end{align*}
	Suppose that no such $\omega'$ exists, then the proposition is by definition.
\end{proof}

Note that the integral in \eqref{eq::EdgesAreIntegration} is not a rough integral, as it is a standard integral against the time-component of the path rather than any of the rough components. Now define $\Gamma$ by
\begin{align*}
\Gamma_{st}=(\gamma_{ts} \otimes id)\Delta^+,
\end{align*}
where
\begin{align*}
\gamma_{st}=&B_+(\varphi(\mathbb{X}_{st})).
\end{align*}
These definitions of $(\Pi,\Gamma)$ generalise those from Theorem \ref{thm::ModelFromRoughPath}. 

\begin{lemma}\label{lemma::ModelOnIdentityIsConstant}
	If the rough path $\mathbb{X}_{st}$ is time-augmented, i.e. it satisfies
\begin{align*}
	\langle \mathbb{X}_{st},\Forest{[0]}\rangle = t-s,
\end{align*}
then:
\begin{align*}
	\Pi_x({\color{blue}\Forest{[]}})(t)=1.
\end{align*}
\end{lemma}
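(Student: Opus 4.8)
The plan is to recognise the single blue vertex ${\color{blue}\Forest{[]}}$ as the image of the empty forest under $B_+\circ\varphi$, and then read off its value directly from the defining formula for $\Pi$ together with the normalisation built into the notion of a rough path.

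First I would verify the identity ${\color{blue}\Forest{[]}}=B_+(\varphi(1))$, where $1$ denotes the unit (empty forest) of $\mathcal{OF}$. Since $\varphi:\mathcal{OF}\to\mathcal{OB}$ is defined as a shuffle morphism it is unital, so $\varphi(1)=1$; and by the extension rule $B_+(\tau_1\shuffle\cdots\shuffle\tau_n)=B_+(\tau_1)\cdots B_+(\tau_n)$, the empty product (the case $n=0$) is the unit of the tree product on $\mathcal{PB}$, which is exactly the single undecorated root ${\color{blue}\Forest{[]}}$. In particular ${\color{blue}\Forest{[]}}$ has no decorated edge adjacent to its root, so it is evaluated by the first clause of the definition of $\Pi$ (not the derivative clause), through the representation $B_+(\varphi(\omega))$ with $\omega=1$.

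With this identification in hand, the defining equation $\Pi_s(B_+(\varphi(\omega)))(t)=\langle\mathbb{X}_{st},\omega\rangle$ specialises at $\omega=1$ to
\begin{align*}
\Pi_x({\color{blue}\Forest{[]}})(t)=\Pi_x\bigl(B_+(\varphi(1))\bigr)(t)=\langle\mathbb{X}_{xt},1\rangle .
\end{align*}
By the normalisation condition in Definition \ref{def::RoughPath}, every rough path satisfies $\langle\mathbb{X}_{xt},1\rangle=1$, and this yields $\Pi_x({\color{blue}\Forest{[]}})(t)=1$, as claimed.

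The only genuinely delicate step is the bookkeeping in the first paragraph: one must track how $B_+$ and $\varphi$ act on the unit and confirm that the representation of ${\color{blue}\Forest{[]}}$ as $B_+(\varphi(\omega))$ is unique, forcing $\omega=1$, so that $\Pi$ is unambiguously evaluated by the non-derivative clause. As a consistency check (and to see where the time-augmentation hypothesis fits), one may re-derive the same value through the time direction: since the $0$-decorated vertex is left unchanged by $\varphi$ we have $\varphi(\Forest{[0]})={\color{blue}\Forest{[]}}$, hence $B_+(\varphi(\Forest{[0]}))={\color{blue}\Forest{[[]]}}$, and time-augmentation gives $\Pi_x({\color{blue}\Forest{[[]]}})(t)=\langle\mathbb{X}_{xt},\Forest{[0]}\rangle=t-x$; if the path moreover satisfies \eqref{eq::EdgesAreIntegration}, the preceding Proposition lets us differentiate in $t$ to obtain $\Pi_x({\color{blue}\Forest{[]}})(t)=\tfrac{d}{dt}(t-x)=1$, matching the direct computation.
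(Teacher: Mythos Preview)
Your proof is correct, but it is not the route the paper takes. The paper argues via the derivative clause: it writes
\[
\Pi_x({\color{blue}\Forest{[]}})(t)=\frac{d}{dt}\Pi_x({\color{blue}\Forest{[[]]}})(t)=\frac{d}{dt}\langle\mathbb{X}_{xt},\Forest{[0]}\rangle=\frac{d}{dt}(t-x)=1,
\]
which is precisely the computation you include only as a consistency check at the end. Your main argument instead realises the single blue vertex as $B_+(\varphi(1))$ and reads off the value from the first clause of the definition of $\Pi$ together with the rough-path normalisation $\langle\mathbb{X}_{xt},1\rangle=1$. This is more direct and, as you correctly point out, does not actually use the time-augmentation hypothesis at all; the paper's argument uses it essentially. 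The price you pay is the bookkeeping in your first paragraph---the paper never records $B_+(1)={\color{blue}\Forest{[]}}$ explicitly, so you have to extract it from the convention that the empty shuffle maps to the unit of the tree product---whereas the paper sidesteps this by going through $\Forest{[0]}$ and differentiating.
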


\begin{proof}
	\begin{align*}
		\Pi_x({\color{blue}\Forest{[]}})(t)=&\frac{d}{dt}\Pi_x({\color{blue} \Forest{[[]]} })(t)\\
		=&\frac{d}{dt}\langle \mathbb{X}_{xt},\Forest{[0]}\rangle \\
		=&\frac{d}{dt}(t-s) \\
		=1.
	\end{align*}
\end{proof}

\begin{theorem}
If the rough path is time-augmented and satisfies \eqref{eq::EdgesAreIntegration}, then the above defined maps $(\Pi,\Gamma)$ are a model for the regularity structure $(A,\mathcal{PB},\mathcal{G})$.
\end{theorem}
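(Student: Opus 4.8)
The plan is to verify directly the two ingredients of Definition~\ref{def::model}: that $\Gamma$ takes values in $\mathcal{G}$ and is a cocycle ($\Gamma_{ss}=\mathrm{Id}$ and $\Gamma_{su}\Gamma_{ut}=\Gamma_{st}$), and that the $\Pi_s$ are continuous linear maps $\mathcal{PB}\to S'(\mathbb{R})$ satisfying the intertwining relation $\Pi_t=\Pi_s\Gamma_{st}$. The regularity-structure part $(A,\mathcal{PB},\mathcal{G})$ is already granted by the preceding lemma, and, following the convention of the paper, I would not verify analytic bounds; linearity of $\Pi_s$ is immediate from its definition, and its values lie in $S'(\mathbb{R})$ since $\Pi_s(\tau)$ is either a (H\"older) function built from the rough path or a distributional $t$-derivative of one.

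For the $\Gamma$-side, I would first record that $\mathbb{X}_{st}$ is a character of $\mathcal{H}_{MKW}$ and that $B_+\circ\varphi$ intertwines $\Delta_{MKW}$ with $\Delta^+$ by the very definition of $\Delta^+$ on trees with undecorated root edge; hence $\gamma_{st}=B_+(\varphi(\mathbb{X}_{st}))$ is a character of $\mathcal{PB}_+$, so $\gamma_{ts}\in\mathcal{G}_+$ and $\Gamma_{st}=(\gamma_{ts}\otimes\mathrm{id})\Delta^+\in\mathcal{G}$. Since $\mathbb{X}_{ss}$ is the counit, $\gamma_{ss}$ is the counit of $\mathcal{PB}_+$ and $\Gamma_{ss}=\mathrm{Id}$. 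For the cocycle property I would use the group law $\Gamma_g\Gamma_h=\Gamma_{h\circ g}$ together with the fact that the product $\circ$ dual to $\Delta^+$ corresponds, under $B_+\circ\varphi$, to the convolution product $\ast$ dual to $\Delta_{MKW}$; Chen's identity $\mathbb{X}_{tu}\ast\mathbb{X}_{us}=\mathbb{X}_{ts}$ then gives $\gamma_{tu}\circ\gamma_{us}=\gamma_{ts}$, whence $\Gamma_{su}\Gamma_{ut}=\Gamma_{\gamma_{tu}\circ\gamma_{us}}=\Gamma_{st}$. The index flips in the definition of $\gamma_{st}$ are exactly what make these orderings consistent.

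The intertwining relation I would prove first on trees with undecorated root edge, where it is a clean consequence of Chen's identity. Writing such a tree as $B_+(\varphi(\omega))$ with $\omega\in\mathcal{OF}$ and abbreviating $\Delta_{MKW}(\omega)=\omega_{(1)}\otimes\omega_{(2)}$, the definition of $\Delta^+$ gives
\[
\Pi_s\bigl(\Gamma_{st}B_+(\varphi(\omega))\bigr)(u)=\sum\langle\mathbb{X}_{ts},\omega_{(1)}\rangle\,\langle\mathbb{X}_{su},\omega_{(2)}\rangle=\langle\mathbb{X}_{ts}\ast\mathbb{X}_{su},\omega\rangle=\langle\mathbb{X}_{tu},\omega\rangle=\Pi_t\bigl(B_+(\varphi(\omega))\bigr)(u).
\]
To pass to trees with a decorated root edge, I would exploit that, under hypothesis~\eqref{eq::EdgesAreIntegration}, the preceding proposition together with the defining relation for decorated-root trees yields the uniform identity $\Pi_s(\sigma)(u)=\tfrac{d}{du}\Pi_s(B_+\sigma)(u)$ for every $\sigma\in\mathcal{PB}$ (and, via $\sigma=B_-(b)$, the companion identity $\Pi_s(B_-(b))(u)=\tfrac{d}{du}\Pi_s(b)(u)$ for $b$ with undecorated root edge, the boundary value being fixed by Lemma~\ref{lemma::ModelOnIdentityIsConstant}). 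Feeding the two-part definition $\Delta^+(\tau)=(\mathrm{id}\otimes B_-)(\Delta^+(B_+\tau)-B_+\tau\otimes{\color{blue}\Forest{[]}})$ into $\Pi_s\circ\Gamma_{st}$ and differentiating, the subtracted term accounts exactly for the $B_-$-boundary, so that $\Pi_s(\Gamma_{st}\tau)(u)=\tfrac{d}{du}\Pi_s(\Gamma_{st}B_+\tau)(u)=\tfrac{d}{du}\Pi_t(B_+\tau)(u)=\Pi_t(\tau)(u)$, the middle equality being the already-established undecorated-root case applied to $B_+\tau$.

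I expect the decorated-root case to be the main obstacle: one must check that $\Gamma_{st}$ is compatible with the passage $\tau\mapsto B_+\tau$ and with $t$-differentiation, and that the correction term $B_+\tau\otimes{\color{blue}\Forest{[]}}$ in the definition of $\Delta^+$ precisely cancels the boundary contribution produced by $B_-$. This is the planar analogue of the consistency check between the two defining equalities for $\Pi$ and $\Gamma$ in Theorem~\ref{thm::ModelFromRoughPath}, and it is where hypothesis~\eqref{eq::EdgesAreIntegration} (via the preceding proposition) and Lemma~\ref{lemma::ModelOnIdentityIsConstant} enter essentially; the undecorated-root case, by contrast, is purely Chen's identity.
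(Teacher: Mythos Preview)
Your proposal is correct and follows essentially the same route as the paper's own proof: Chen's identity gives the cocycle property of $\Gamma$, the intertwining relation $\Pi_s\Gamma_{st}=\Pi_t$ is first established on trees with no decorated edge at the root via Chen's identity, and then extended to general trees using the derivative relation $\Pi_s(\sigma)=\tfrac{d}{du}\Pi_s(B_+\sigma)$ together with the two-part definition of $\Delta^+$ and Lemma~\ref{lemma::ModelOnIdentityIsConstant} to kill the $B_+\tau\otimes{\color{blue}\Forest{[]}}$ correction. You spell out a few points the paper leaves implicit (that $\gamma_{ts}$ is a character of $\mathcal{PB}_+$, that $\Gamma_{ss}=\mathrm{Id}$), but the substance of the argument is the same.
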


\begin{proof}
We have to show that $\Gamma_{xy}\Gamma_{yz}=\Gamma_{xz}$, that $\Pi_x\Gamma_{xy}=\Pi_y$. The first conditions follows immediately from Chen's identity on rough paths. For the second identity, first let $\omega \in \mathcal{OF}$, then:
\begin{align*}
\Pi_x\Gamma_{xy}(B_+(\varphi(\omega)))(t)=&\Pi_x( (\mathbb{X}_{yx} \otimes B_+ \circ \varphi)\Delta_{MKW}(\omega) )(t)\\
=&\langle \mathbb{X}_{xt},( \mathbb{X}_{yx} \otimes id)\Delta_{MKW}(\omega)\rangle \\
=&\langle \mathbb{X}_{yx}\otimes \mathbb{X}_{xt},\Delta_{MKW}(\omega)\rangle \\
=&\langle \mathbb{X}_{yt},\omega\rangle\\
=&\Pi_y(B_+(\varphi(\omega)))(t),
\end{align*}
which shows that the identity holds for any tree that has no decorated edges adjacent to the root. For a general tree:
\begin{align*}
	\Pi_x\Gamma_{xy}(\varphi(\omega))(t)=&\frac{d}{dt}\Pi_x(B_+(\Gamma_{xy}\varphi(\omega)  )  )(t)\\
	=&\frac{d}{dt}\Pi_x( \Gamma_{xy}(B_+(\varphi(\omega)))-\langle\mathbb{X}_{yx},B_+(\omega)\rangle {\color{blue} \Forest{[]}}  )(t)\\
	=&\frac{d}{dt}\Pi_x( \Gamma_{xy}(B_+(\varphi(\omega))))(t)\\
	=&\frac{d}{dt}\Pi_y(B_+(\varphi(\omega)))(t)\\
	=&\Pi_y(\varphi(\omega))(t),
\end{align*}
where we used Lemma \ref{lemma::ModelOnIdentityIsConstant}.
\end{proof}

We now generalise the renormalisation construction from Section \ref{ssect:roughregstr}. Let $\rho_0: Lie(\mathcal{PT}) \to S(Lie(\mathcal{PT}))\otimes Lie(\mathcal{PT})$ be time-cotranslation in planarly branched rough paths. Let $\pi^-$ be projection onto trees of negative degree and denote $\rho_0^-=(\pi^- \otimes id)\rho_0 : Lie(\mathcal{PT})\to S(\mathcal{PT}_-) \otimes Lie(\mathcal{PT})$. Define $\Delta^-: \mathcal{PB} \to S(\mathcal{PB}_-) \otimes \mathcal{PB}$ by $\Delta^- = (\varphi \otimes \varphi)\rho_0^-$. For a character $\ell$ in the dual algebra $\mathcal{PB}_-^{\ast}$, define renormalisation by $\ell$ as $M_{\ell}=(\ell \otimes id)\Delta^-$. For a model $(\Pi,\Gamma)$, define the renormalised model $(\Pi^{M_{\ell}},\Gamma^{M_{\ell}})$ by
\begin{align*}
\Pi_s^{M_{\ell}}=&\Pi_s M_{\ell},\\
\Gamma_{st}^{M_{\ell}}=&(\gamma_{ts}M_{\ell}\otimes id)\Delta^+.
\end{align*}

\begin{proposition} \label{prop::RoughPathRenormalisedModelIsModel}
The renormalised model is a model.
\end{proposition}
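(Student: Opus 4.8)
The plan is to check the three defining conditions of a model from Definition \ref{def::model} for the pair $(\Pi^{M_\ell},\Gamma^{M_\ell})$, namely $\Gamma^{M_\ell}_{xx}=\mathrm{Id}$, the cocycle identity $\Gamma^{M_\ell}_{xy}\Gamma^{M_\ell}_{yz}=\Gamma^{M_\ell}_{xz}$, and the compatibility $\Pi^{M_\ell}_x\Gamma^{M_\ell}_{xy}=\Pi^{M_\ell}_y$, and in each case to reduce the renormalised statement to the corresponding statement for the unrenormalised model, which is a model by the preceding theorem. The single algebraic fact driving all three reductions is the commutation of negative renormalisation with recentering,
\[
\Delta^+ M_\ell = (M_\ell\otimes M_\ell)\Delta^+ \qquad (\dagger).
\]

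First I would establish $(\dagger)$. Since $\Delta^+$ is built from $\Delta_{MKW}$ and $\Delta^-=(\varphi\otimes\varphi)\rho_0^-$ is built from the time-cotranslation $\rho_0$ through the projection $\pi^-$ and the isomorphism $\varphi$, the cointeraction identity $m^{1,3}(\rho_T\otimes\rho_T)\Delta_{MKW}=(id\otimes\Delta_{MKW})\rho_T$ of \cite{Rahm2021RP} transports to $m^{1,3}(\Delta^-\otimes\Delta^-)\Delta^+=(id\otimes\Delta^+)\Delta^-$, the four tensor legs having the right types $S(\mathcal{PB}_-)$, $\mathcal{PB}_+$, $\mathcal{PB}$. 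Applying $(\ell\otimes id\otimes id)$ to both sides and using that $\ell$ is a character, so that it distributes over the product $m^{1,3}$ performs on the two $S(\mathcal{PB}_-)$-legs, the left side collapses to $(M_\ell\otimes M_\ell)\Delta^+$ and the right side to $\Delta^+M_\ell$, which is exactly $(\dagger)$. Along the way I would record that $M_\ell$ preserves the grading and the counit, and restricts to an algebra morphism of $\mathcal{PB}_+$ for the tree product; this is what makes $\gamma_{ts}M_\ell$ a character in $\mathcal{G}_+$, so that $\Gamma^{M_\ell}_{st}=(\gamma_{ts}M_\ell\otimes id)\Delta^+$ genuinely lands in $\mathcal{G}$.

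With $(\dagger)$ in hand the three conditions follow formally. Dualising $(\dagger)$ shows that precomposition with $M_\ell$ is an endomorphism of the convolution algebra $(\mathcal{PB}_+^\ast,\star)$, since $(gM_\ell)\star(hM_\ell)=(g\star h)M_\ell$; hence the assignment $\Gamma_g\mapsto\Gamma_{gM_\ell}$ respects the group law of $\mathcal{G}$, and the cocycle identity $\Gamma^{M_\ell}_{xy}\Gamma^{M_\ell}_{yz}=\Gamma^{M_\ell}_{xz}$ inherits from the unrenormalised one, i.e.\ from Chen's identity. The identity $\Gamma^{M_\ell}_{xx}=\mathrm{Id}$ holds because $\gamma_{xx}$ is the counit and $M_\ell$ preserves it. For compatibility I would compute
\[
\Pi^{M_\ell}_x\Gamma^{M_\ell}_{xy}=\Pi_x M_\ell(\gamma_{yx}M_\ell\otimes id)\Delta^+=(\gamma_{yx}\otimes\Pi_x)(M_\ell\otimes M_\ell)\Delta^+,
\]
and then rewrite the right-hand side with $(\dagger)$ as $(\gamma_{yx}\otimes\Pi_x)\Delta^+M_\ell=\Pi_x\Gamma_{xy}M_\ell=\Pi_y M_\ell=\Pi^{M_\ell}_y$, the penultimate equality being the compatibility of the unrenormalised model.

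I expect the main obstacle to be the transport of the cointeraction through the grading projection $\pi^-$ together with the maps $\varphi$ and $B_+$ relating $Lie(\mathcal{PT})$ to $\mathcal{PB}$. The cointeraction of \cite{Rahm2021RP} is stated for the full time-cotranslation $\rho_0$, whereas $\Delta^-$ retains only the negative-degree part; one must verify that discarding the non-negative components of the extracted forests is consistent with the product $m^{1,3}$ and with $\Delta^+$, i.e.\ that degrees add correctly under contraction so that $\pi^-$ commutes with the coactions. Checking that $M_\ell$ is degree-preserving and $\mathcal{PB}_+$-stable, which is precisely what makes $\Gamma^{M_\ell}$ a well-defined map into $\mathcal{G}$, is the step most likely to require careful degree bookkeeping rather than a purely formal argument.
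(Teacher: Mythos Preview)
Your approach is correct and rests on the same core ingredient as the paper---the cointeraction between the recentering and renormalisation coactions---but you package it differently. The paper never states the identity $(\dagger)$ on $\mathcal{PB}$; instead it translates everything back to the rough-path side via $\varphi$ and $B_+$, uses the known cointeraction of $\rho_0^-$ with $\Delta_{MKW}$ there, and then translates back, doing this separately for the two cases (root adjacent to a decorated edge or not). Your route is to transport cointeraction once and for all to $\mathcal{PB}$, extract $(\dagger)$, and then let the three model axioms fall out formally. This is the standard presentation in the regularity-structures literature and is arguably cleaner; the paper's explicit case-by-case computation is more concrete but longer. The actual work---handling the two-case definition of $\Delta^+$, the projection $\pi^-$, and the commutation $B_+M_\ell=M_\ell B_+$---is the same either way, and you correctly flag it as the main obstacle. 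The paper's remark that ``no subtree containing the root could be negative'' in the planarly branched setting is exactly what makes the $\pi^-$ projection harmless here (unlike in Section~4.3, where extended decorations are needed).

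One small correction: your claim that ``$M_\ell$ preserves the grading'' is not right if taken literally---contracting a negative subtree strictly \emph{increases} the degree. What you actually need, and what does hold, is that $M_\ell$ maps $\mathcal{PB}_+$ into itself (a consequence of degree-increasing), preserves the counit, and is multiplicative for the tree product (because negative subtrees never contain the root). That is enough to ensure $\gamma_{ts}M_\ell\in\mathcal{G}_+$ and hence $\Gamma^{M_\ell}_{st}\in\mathcal{G}$.
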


\begin{proof}
First we prove that $\Gamma_{xy}^{M_{\ell}}\Gamma_{yz}^{M_{\ell}}=\Gamma_{xz}^{M_{\ell}}$ for an input tree with no decorated edge adjacent to the root. By definition, we can rewrite:
\begin{align*}
\Gamma_{xy}^{M_{\ell}}\Gamma_{yz}^{M_{\ell}}(B_+(\varphi(\omega)))=&\Gamma_{xy}^{M_{\ell}}(\gamma_{zy}M_{\ell} \otimes id)\Delta^+(B_+(\varphi(\omega)))\\
=&(\gamma_{zy}M_{\ell}\otimes \gamma_{yx}M_{\ell}\otimes id)(id \otimes \Delta^+)\Delta^+(B_+(\varphi(\omega))).
\end{align*}
Then using the definition of $\Delta^+$, denoting $\varphi^{-1}(\log(\ell))=\hat{\ell}$, and noting that $\gamma_{yz}M_\ell \circ B_+ \circ \varphi = \mathbb{X}_{yz}T_{\hat{\ell}}^{\ast}$, we get:
\begin{align*}
=&(\mathbb{X}_{zy}T_{\hat{\ell}}^{\ast}\otimes \mathbb{X}_{yx}T_{\hat{\ell}}^{\ast}\otimes B_+ \circ \varphi)(id \otimes \Delta_{MKW})\Delta_{MKW}(\omega)\\
=&(\mathbb{X}_{zy}T_{\hat{\ell}}^{\ast}\otimes \mathbb{X}_{yx}T_{\hat{\ell}}^{\ast}\otimes B_+ \circ \varphi)(\Delta_{MKW}\otimes id)\Delta_{MKW}(\omega)\\
=&(\exp(\hat{\ell}) \otimes \mathbb{X}_{zy} \otimes \exp(\hat{\ell})\otimes  \mathbb{X}_{yx} \otimes B_+ \circ \varphi)(\rho_0^- \otimes \rho_0^- \otimes id)(\Delta_{MKW} \otimes id)\Delta_{MKW}(\omega).
\end{align*}
Now we use the character property of $\exp(\hat{\ell})$:
\begin{align*}
=&(\exp(\hat{\ell})\otimes \mathbb{X}_{zy}\otimes  \mathbb{X}_{yx} \otimes B_+ \circ \varphi)m^{1,3}(\rho_0^- \otimes \rho_0^- \otimes id)(\Delta_{MKW} \otimes id)\Delta_{MKW}(\omega),
\end{align*}
and the cointeraction property between $\rho_0^-$ and $\Delta_{MKW}$:
\begin{align*}
=&(\exp(\hat{\ell})\otimes \mathbb{X}_{zy}\otimes  \mathbb{X}_{yx} \otimes B_+ \circ \varphi)(id \otimes \Delta_{MKW} \otimes id)(\rho_0^- \otimes id)\Delta_{MKW}(\omega)\\
=&\exp(\hat{\ell}) \otimes \mathbb{X}_{zy}\star \mathbb{X}_{yx}\otimes id)(\rho_0^- \otimes B_+ \circ \varphi)\Delta_{MKW}(\omega)\\
=&(\mathbb{X}_{zx}T_{\hat{\ell}}^{\ast}\otimes B_+ \circ \varphi)\Delta_{MKW}(\omega)\\
=&\Gamma_{xz}^{M_{\ell}}(B_+(\varphi(\omega))).
\end{align*}
For an arbitrary input argument, we use the definitions to write:
\begin{align*}
	&\Gamma_{xy}^{M_{\ell}}\Gamma_{yz}^{M_{\ell}}(\varphi(\omega))\\
	=&\Gamma_{xy}^{M_{\ell}}(\gamma_{zy}M_{\ell} \otimes id)(id \otimes B_-)(\Delta^+(B_+(\varphi(\omega))) - B_+(\varphi(\omega))\otimes {\color{blue}\Forest{[]}} )\\
	=&(\gamma_{zy}M_{\ell} \otimes \gamma_{yx}M_{\ell}\otimes B_-)(id \otimes \Delta ^+ - id \otimes id \otimes {\color{blue}\Forest{[]}})(\Delta^+(B_+(\varphi(\omega))) - B_+(\varphi(\omega))\otimes {\color{blue}\Forest{[]}} ).
\end{align*}
Then translating everything into rough path language gives:
\begin{align*}
	=&(\mathbb{X}_{zy}T_{\hat{\ell}}^{\ast}\otimes \mathbb{X}_{yx}T_{\hat{\ell}}^{\ast} \otimes \varphi )(id \otimes \Delta_{MKW} - id \otimes id \otimes 1)(\Delta_{MKW}(\omega)-\omega \otimes 1)\\
	=&(\exp(\hat{\ell})\otimes \mathbb{X}_{zy}\otimes \exp(\hat{\ell}) \otimes \mathbb{X}_{yx}\otimes \varphi)(\rho_0^- \otimes \rho_0^- \otimes id)(id \otimes \Delta_{MKW} - id \otimes id \otimes 1)(\Delta_{MKW}(\omega)-\omega \otimes 1)\\
	=&(\exp(\hat{\ell}) \otimes \mathbb{X}_{zy}\otimes \exp(\hat{\ell})\otimes \mathbb{X}_{yx}\otimes \varphi)(\rho_0^- \otimes \rho_0^- \otimes id)( (id \otimes \Delta_{MKW})\Delta_{MKW}(\omega) - \Delta_{MKW}(\omega)\otimes 1).
	\end{align*}
	We now use the character property and the cointeraction property:
	\begin{align*}
	=&(\exp(\hat{\ell}) \otimes \mathbb{X}_{zy}\otimes \mathbb{X}_{yx}\otimes \varphi)m^{1,3}(\rho_0^- \otimes \rho_0^- \otimes id)( (\Delta_{MKW} \otimes id)\Delta_{MKW}(\omega) - \Delta_{MKW}(\omega)\otimes 1 )\\
	=&(\exp(\hat{\ell}) \otimes \mathbb{X}_{zy}\otimes \mathbb{X}_{yx}\otimes \varphi)(id \otimes \Delta_{MKW} \otimes id)( (\rho_0^- \otimes id)\Delta_{MKW}(\omega) - \rho_0^-(\omega)\otimes 1)\\
	=&(\exp(\hat{\ell})\otimes \mathbb{X}_{zy} \star \mathbb{X}_{yx}\otimes \varphi)( (\rho_0^- \otimes id)\Delta_{MKW}(\omega) - \rho_0^-(\omega)\otimes 1)
	\end{align*}
	We now factor out the $\rho_0^-$ in the rightmost term, and translate everything back to regularity structure language using the definitions.
	\begin{align*}
	=&(\exp(\hat{\ell}) \otimes \mathbb{X}_{zx}\otimes \varphi)(\rho_0^- \otimes id )(\Delta_{MKW}(\omega) - \omega \otimes 1 )\\
	=&(\mathbb{X}_{zx}T_{\hat{\ell}}^{\ast} \otimes \varphi)(\Delta_{MKW}(\omega) - \omega \otimes 1)\\
	=&(\gamma_{zx}M_{\ell}\otimes B_-)(\Delta^+(B_+(\varphi(\omega))) -B_+(\varphi(\omega))\otimes {\color{blue}\Forest{[]}} )\\
	=&\Gamma_{xz}^{M_{\ell}}(\varphi(\omega)).
\end{align*}
Next we prove $\Pi_x^{M_{\ell}}\Gamma_{xy}^{M_{\ell}}=\Pi_y$. First if the input tree has no decorated edges adjacent to the root, we can rewrite:
\begin{align*}
\Pi_x^{M_{\ell}}\Gamma_{xy}^{M_{\ell}}(B_+(\varphi(\omega)))(t)=&\Pi_x^{M_{\ell}}(\gamma_{yx}M_{\ell}\otimes id)\Delta^+(B_+(\varphi(\omega)))(t)\\
=&(\mathbb{X}_{yx}T_{\hat{\ell}}^{\ast}\otimes \mathbb{X}_{xt}T_{\hat{\ell}}^{\ast})\Delta_{MKW}(\omega)\\
=&( \exp(\hat{\ell}) \otimes \mathbb{X}_{yx} \otimes \exp(\hat{\ell})  \otimes \mathbb{X}_{xt})(\rho_0^- \otimes \rho_0^-)\Delta_{MKW}(\omega).
\end{align*}
Then we can use the character property and cointeraction:
\begin{align*}
=&(\exp(\hat{\ell}) \otimes \mathbb{X}_{yx}\otimes \mathbb{X}_{xt})m^{1,3}(\rho_0^-\otimes \rho_0^-)\Delta_{MKW}(\omega)\\
=&(\exp(\hat{\ell}) \otimes \mathbb{X}_{yx}\otimes \mathbb{X}_{xt})(id \otimes \Delta_{MKW})\rho_0^-(\omega)\\
=&(\exp(\hat{\ell}) \otimes \mathbb{X}_{yt})\rho_0^-(\omega)\\
=&\mathbb{X}_{yt}T_{\hat{\ell}}^{\ast}(\omega)\\
=&\Pi_y^{M_{\ell}}(B_+(\varphi(\omega)))(t).
\end{align*}
Finally, if the input tree is arbitrary, we use the definitions to write:
\begin{align*}
	\Pi_x^{M_{\ell}}\Gamma_{xy}^{M_{\ell}}(\varphi(\omega))(t)=&\Pi_x^{M_{\ell}}(\gamma_{yx}M_{\ell}\otimes B_-)(\Delta^+(B_+(\varphi(\omega))) -B_+(\varphi(\omega)) \otimes {\color{blue}\Forest{[]}} )(t)\\
	=&(\gamma_{yx}M_{\ell} \otimes \frac{d}{dt}\Pi_x B_+ M_{\ell} B_- )(\Delta^+(B_+(\varphi(\omega))) -B_+(\varphi(\omega)) \otimes {\color{blue}\Forest{[]}} )(t)\\
	=&(\gamma_{yx}M_{\ell} \otimes \frac{d}{dt}\Pi_x M_{\ell} )(\Delta^+(B_+(\varphi(\omega))) -B_+(\varphi(\omega)) \otimes {\color{blue}\Forest{[]}} )(t).
	\end{align*}
	We then translate into the language of rough paths:
	\begin{align*}
	=&(\mathbb{X}_{yx}T_{\hat{\ell}}^{\ast} \otimes \frac{d}{dt}\mathbb{X}_{xt}T_{\hat{\ell}})( \Delta_{MKW}(\omega) - \omega \otimes 1 )\\
	=&\frac{d}{dt}(\exp(\hat{\ell}) \otimes \mathbb{X}_{yx}\otimes \exp(\hat{\ell}) \otimes \mathbb{X}_{xt})(\rho_0^- \otimes \rho_0^-)( \Delta_{MKW}(\omega) - \omega \otimes 1 ).
	\end{align*}
	Use the character property and cointeraction:
	\begin{align*}
	=&\frac{d}{dt}(\exp(\hat{\ell}) \otimes \mathbb{X}_{yx}\otimes \mathbb{X}_{xt})m^{1,3}(\rho_0^- \otimes \rho_0^-)( \Delta_{MKW}(\omega) - \omega \otimes 1 )\\
	=&\frac{d}{dt}(\exp(\hat{\ell}) \otimes \mathbb{X}_{yx}\otimes \mathbb{X}_{xt})(id \otimes \Delta_{MKW})\rho_0^-(\omega)-\frac{d}{dt}(\exp(\hat{\ell}) \otimes \mathbb{X}_{yx}\otimes \mathbb{X}_{xt})(\rho_0^-(\omega)\otimes 1).
	\end{align*}
	Now we note that $\frac{d}{dt}\langle \mathbb{X}_{xt},1\rangle =0$ and simplify:
	\begin{align*}
	=&\frac{d}{dt}(\exp(\hat{\ell}) \otimes \mathbb{X}_{yt} )\rho_0^-(\omega)\\
	=&\frac{d}{dt}\mathbb{X}_{yt}T_{\hat{\ell}}^{\ast}(\omega)\\
	=&\frac{d}{dt}\Pi_y^{M_{\ell}}(B_+(\varphi(\omega)))\\
	=&\Pi_y^{M_{\ell}}(\varphi(\omega)),
\end{align*}
where we used that $B_+M_{\ell}=M_{\ell}B_+$, which follows from that the edge added by $B_+$ can not be part of a subtree with negative regularity.
\end{proof}

We conclude the section by giving the combinatorial description of $\Delta^-$.

\begin{definition} \label{def::AdmissiblePartitionRough}
	Let $\tau \in \mathcal{PB}$ be a tree and let $\tau_1,\dots,\tau_n$ be a set of disjoint (not necessarily spanning) subtrees of $\tau$. We say that $\tau_1,\dots,\tau_n$ is $\mathcal{PB}_-$-admissible if the following conditions hold:
	\begin{enumerate}
		\item If $e$ is an edge in $\tau_i$ and $e'$ is an edge in $\tau$ that is outgoing from the same vertex as $e$, and if $e'$ is to the right of $e$ in the planar embedding, then $e'$ is in $\tau_i$.
		\item If $e=(v_1,v_2)$ is an edge in $\tau$ with a non-zero decoration then either both $v_1$ and $v_2$ belong to the same subtree, or neither $v_1$ or $v_2$ belong to any subtree.
		\item Each $\tau_i$ has negative regularity.
	\end{enumerate}
For $\tau_1,\dots,\tau_n$ $\mathcal{PB}_-$-admissible, denote by $\tau / \tau_1 \dots \tau_n$ the tree obtained by contracting each $\tau_i$ into a single vertex.
\end{definition}

\begin{proposition}
	Let $\tau \in \mathcal{PB}$, then
	\begin{align*}
		\Delta^-(\tau)=\sum_{\tau_1,\dots,\tau_n \atop \mathcal{PB}_-\text{-admissible}} \tau_1 \dots \tau_n \otimes \tau/\tau_1 \dots \tau_n,
	\end{align*}
where the product in the left tensor is the free symmetric product in $S(\mathcal{PB}_-)$.
\end{proposition}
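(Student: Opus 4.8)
The plan is to unfold the definition $\Delta^-=(\varphi\otimes\varphi)\rho_0^-$, where $\rho_0^-=(\pi^-\otimes\mathrm{id})\rho_0$ and $\rho_0$ is time-cotranslation, and then to set up a coefficient-preserving bijection between the surviving terms and the $\mathcal{PB}_-$-admissible partitions of Definition \ref{def::AdmissiblePartitionRough}. First I would put $\omega:=\varphi^{-1}(\tau)$, a single tree in $\mathcal{PT}$, and write out $\rho_0(\omega)$ using the restriction of the cotranslation formula \eqref{eq::CoTranslation} to the time direction, so that the sum over decoration vectors collapses and every contracted vertex receives the decoration $0$:
\begin{align*}
\rho_0(\omega)=\sum_{\omega_1,\dots,\omega_n \atop \text{admiss.~partition}} \pi(\omega_1)\centerdot\cdots\centerdot\pi(\omega_n)\otimes \omega/\omega_1,\dots,\omega_n.
\end{align*}

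The first key step is the reduction forced by $\pi^-$. Since $\mathcal{PT}_-$ is spanned by single trees of negative degree, $\pi^-$ annihilates every symmetric factor $\pi(\omega_i)$ that is a genuine Lie bracket. Now $\pi(\omega_i)$ is a single tree precisely when $\omega_i$ is a single subtree: a block with two or more adjacent roots is sent by $\pi$ to a nonzero iterated bracket, which has no single-tree component and is therefore killed. Hence only those partitions survive in which each $\omega_i$ is a single subtree of negative degree, each with coefficient $1$ and left leg $\omega_1\centerdot\cdots\centerdot\omega_n$. This matches condition (3) of Definition \ref{def::AdmissiblePartitionRough} and eliminates all bracket terms, consistent with the remark that $S(\mathcal{PB})_-$ carries no Lie brackets.

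Next I would transport the remaining admissibility conditions through $\varphi$, setting $\tau_i:=\varphi(\omega_i)$ regarded as a subconfiguration of $\tau$. Recall that $\varphi$ replaces each non-zero decorated vertex $v$ by two vertices $v_1\to v_2$ joined by a rightmost edge carrying the decoration of $v$, with $v_2$ a new leaf. I would then check three things: (i) the planar rightward-closure built into the admissible-partition definition corresponds exactly to condition (1), once one observes that the artificial leaf $v_2$ sits rightmost and is hence pulled into $\tau_i$ by closure precisely when $v\in\omega_i$; (ii) the membership dichotomy ``$v\in\omega_i$ or not'' becomes condition (2), the both-or-neither rule for the two endpoints of a decorated edge; and (iii) $\varphi$ preserves degree, since a decorated vertex of decoration $d$ and its image edge both contribute $\alpha_d-1$ while an ordinary edge contributes $1$, so negativity is preserved. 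Finally I would verify that $\varphi$ intertwines the two contractions, $\varphi(\omega/\omega_1,\dots,\omega_n)=\tau/\tau_1\dots\tau_n$: the contracted vertices are $0$-decorated hence undecorated under $\varphi$, the decorated edges internal to each $\tau_i$ vanish upon contraction, and the undecorated edge joining each block's root region to its parent is preserved. Assembling (i)--(iii) with the contraction compatibility gives a term-by-term identification of the two sums.

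The step I expect to be the main obstacle is the planar bookkeeping of step three: one must argue carefully that the \emph{rightmost} placement of the decoration-edge is exactly what makes rightward closure (condition (1)) and the both-or-neither rule (condition (2)) simultaneously compatible, so that single-subtree admissible partitions of $\omega$ and $\mathcal{PB}_-$-admissible partitions of $\tau$ are in genuine bijection. By contrast, the $\pi^-$ reduction and the degree and contraction compatibilities are comparatively routine once the planar correspondence is pinned down.
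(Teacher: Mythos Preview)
The paper states this proposition without proof, giving only an illustrative example afterward. Your proposal therefore supplies the argument the paper omits, and the approach you take --- unfolding $\Delta^-=(\varphi\otimes\varphi)(\pi^-\otimes\mathrm{id})\rho_0$, reducing via $\pi^-$ to single-subtree blocks, and then transporting the admissible-partition combinatorics through $\varphi$ --- is the natural one and is correct.

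One minor imprecision in your $\pi^-$ step: a multi-root block $\omega_i$ is not always sent by $\pi$ to a \emph{nonzero} iterated bracket (e.g.\ $\pi(\tau\tau)$ can vanish). What matters is that $\pi(\omega_i)$ lands in the span of Lie words of length $\geq 2$ in the free Lie algebra $Lie(\mathcal{PT})$, which has trivial intersection with the single-tree subspace $\mathcal{PT}\supset\mathcal{PT}_-$; hence $\pi^-$ kills it regardless. Your identification of conditions (1)--(3) under $\varphi$ is right, and the step you flag as the obstacle is indeed the content: the rightmost placement of the decoration edge means rightward closure (condition~1) alone does not force $v_2$ into $\tau_i$ when $v_1$ is a leaf of $\tau_i$, so condition~(2) is genuinely needed to ensure each $\tau_i$ pulls back to a well-defined subtree $\omega_i$ of $\omega$. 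Once that is checked, the degree preservation and contraction compatibility are routine, as you say.
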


\begin{example} Let $\frac{1}{2} - \epsilon < \alpha < \frac{1}{2}$ for all positive $\epsilon$, then:
	\begin{align*}
		\Delta^-({\color{blue}\Forest{[ [[][,edge label = {node[midway,fill=white,scale=0.5]{2}}]][[,edge label = {node[midway,fill=white,scale=0.5]{3}}]] [,edge label = {node[midway,fill=white,scale=0.5]{1}}] ]}})=& {\color{blue} \Forest{[[,edge label={node[midway,fill=white,scale=0.5]{1}}]]} } \otimes {\color{blue}\Forest{[ [[][,edge label = {node[midway,fill=white,scale=0.5]{2}}]][[,edge label = {node[midway,fill=white,scale=0.5]{3}}]] ]}} + {\color{blue} \Forest{[[,edge label = {node[midway,fill=white,scale=0.5]{2}}]]} } \otimes {\color{blue}\Forest{[ [[]][[,edge label = {node[midway,fill=white,scale=0.5]{3}}]] [,edge label = {node[midway,fill=white,scale=0.5]{1}}] ]}} + {\color{blue} \Forest{[[,edge label = {node[midway,fill=white,scale=0.5]{3}}]]} } \otimes {\color{blue}\Forest{[ [[][,edge label = {node[midway,fill=white,scale=0.5]{2}}]][] [,edge label = {node[midway,fill=white,scale=0.5]{1}}] ]}}\\
		+&{\color{blue} \Forest{[[[,edge label = {node[midway,fill=white,scale=0.5]{3}}]][,edge label = {node[midway,fill=white,scale=0.5]{1}}]]} } \otimes {\color{blue}\Forest{[ [[][,edge label = {node[midway,fill=white,scale=0.5]{2}}]] ]}}+{\color{blue} \Forest{[[,edge label = {node[midway,fill=white,scale=0.5]{1}}]]} \Forest{[[,edge label = {node[midway,fill=white,scale=0.5]{2}}]]} } \otimes {\color{blue}\Forest{[ [[]][[,edge label = {node[midway,fill=white,scale=0.5]{3}}]] ]}}+{\color{blue}\Forest{[[,edge label={node[midway,fill=white,scale=0.5]{1}}]]}\Forest{[[,edge label = {node[midway,fill=white,scale=0.5]{3}}]]}} \otimes {\color{blue}\Forest{[ [[][,edge label = {node[midway,fill=white,scale=0.5]{2}}]][] ]}}\\
		+&{\color{blue} \Forest{[[,edge label = {node[midway,fill=white,scale=0.5]{2}}]]}\Forest{[[,edge label = {node[midway,fill=white,scale=0.5]{3}}]]} } \otimes {\color{blue}\Forest{[ [[]][] [,edge label = {node[midway,fill=white,scale=0.5]{1}}] ]}} + {\color{blue} \Forest{[[[,edge label = {node[midway,fill=white,scale=0.5]{3}}]][,edge label = {node[midway,fill=white,scale=0.5]{1}}]]} \Forest{[[,edge label = {node[midway,fill=white,scale=0.5]{2}}]]} } \otimes {\color{blue} \Forest{[[[]]]} }.
	\end{align*}
\end{example}

\section{Planar regularity structures}
\label{sec::PlanarRegularityStructures}

We now want to extend the notions from the previous section, to define a concept of planar regularity structures and their renormalisation. To achieve this, we will adapt the constructions based on the pre-Lie structure of non-planar trees to a construction based on the post-Lie structure of planar trees. We will need the following notation:

Let $\mathcal{I}=\{G_1,\dots,G_k \}$ be a set of convolution kernels and let $\Xi = \{X_t^1,\dots,X_t^n \}$ be a set of noises. Denote $\mathcal{D}=\mathcal{I}\cup \Xi \times \mathbb{N}^d$. Let $\mathfrak{T}$ denote the space of planar trees where the edges are decorated by $\mathcal{D}$ and the vertices are decorated by $\mathbb{N}^d$, such that each vertex has at most one outgoing noise edge and such that vertices with an incoming noise edge has no outgoing edges. We will say that a vertex is a \textit{noise vertex} if it has an incoming noise edge. We denote by $\tilde{\mathfrak{T}}$ the subset of trees that has zero root decoration, which we call \textit{planted forests}. To each tree $\tau \in \mathfrak{T}$, we define the regularity $|\tau|$ as the sum over all edge decorations and all vertex decorations; vertex decorations contribute the sum of its components, the $\mathbb{N}^d$ part of an edge decoration subtracts the sum of its components, a noise decoration $X_t^i$ contributes $\alpha_i -1$ where $\alpha_i$ is the regularity of the path $X_t^i$, a convolution kernel $G_i$ contributes the regularity of the convolution kernel. The space $\mathfrak{T}$ will be the model space for the regularity structure we are constructing, and the index set will be the set of possible regularities. Following the planarly branched rough path example, we construct the group of linear operators by considering a Hopf algebra in the next subsection. To construct this Hopf algebra, we use the natural generalisation of the non-planar construction from \cite{BrunedManchon2022} and \cite{BrunedKatsetsiadis2022}.

\subsection{The positive Hopf algebra} \label{ssect:PositiveHopf}

We follow the constructions in \cite{BrunedManchon2022} and \cite{BrunedKatsetsiadis2022}, but adapted for planar trees.\\

Let $\mathfrak{T}^+$ denote the subspace of $\mathfrak{T}$ generated by trees of positive regularity that has no noise edge at the root. Let $\tilde{\mathfrak{T}}^+$ denote the subspace of $\mathfrak{T}^+$ spanned by planted forests. As before, we will draw trees in $\mathfrak{T}^+$ using the colour blue to indicate that they are regularity structure trees, as opposed to rough path trees. We will write $I_a(\omega)$ for the tree obtained by grafting the ordered forest $\omega$ onto a zero-decorated vertex using edges decorated by $(I,a)\in \mathcal{D}$. We say that $I_a(\tau)$ is a planted tree if $\tau$ is a tree. Let $\tilde{\mathcal{V}}$ denote the free Lie algebra generated by planted trees. We will see $\tilde{\mathcal{V}}$ as a subspace of $\mathfrak{T}^+$ by identifying the generic elements $[I^i_a(\tau_1),I^j_b(\tau_2)]$ with a difference of trees as follows:
\begin{align} \label{eq::ForestAsTree}
[I^i_a(\tau_1),I^j_b(\tau_2)] = {\color{blue} \Forest{ [[\tau_1,edge label = {node[midway,fill=white,scale=0.5]{$d$}}] [\tau_2,edge label = {node[midway,fill=white,scale=0.5]{$d'$}}] ] } } - {\color{blue} \Forest{ [[\tau_2,edge label = {node[midway,fill=white,scale=0.5]{$d'$}}] [\tau_1,edge label = {node[midway,fill=white,scale=0.5]{$d$}}] ] } },
\end{align}
where a vertex decorated by $\tau_i$ symbolically represents a subtree $\tau_i$ that could be bigger than a single vertex, and where we write $d=(I^i,a),d'=(I^j,b)$ for the edge decorations. We define a product on the subspace of planted trees by:
\begin{align*}
I^i_a(\tau_1)\graft I^j_b(\tau_2)=I^j_b(\tau_1 \graft_{I^i_a} \tau_2),
\end{align*}
where $\graft_{I^i_a}$ means left grafting of $\tau_1$ onto $\tau_2$ using an edge decorated by $(I^i,a) \in \mathcal{D}$. We do not graft on noise vertices, as this would result in trees outside of $\mathfrak{T}$.\\

Recall the operator $\uparrow_v^{\ell}$ on non-planar trees from Section \ref{ssect:trees}, we now extend it to planar trees by letting $\uparrow_v^{\ell}\tau$ be the tree obtained by changing the decoration of the vertex $v\in V(\tau)$ from $n_v$ to $n_v+\ell$. Let $\uparrow_v^{-\ell}$ by defined by reducing the decoration. Define $\uparrow^{\ell}\tau = \sum_{v \in N_{\tau}} \uparrow_v^{\ell}\tau$, where $N_{\tau}$ is the set of non-noise vertics in $\tau$. Define the deformed grafting product by
\begin{align*}
I^i_a(\tau_1)\dgraft I^j_b(\tau_2)=\sum_{v \in N_{\tau_2}} \sum_{\ell \in \mathbb{N}^d} {n_v \choose \ell}I^j_b( \tau_1 \graft^v_{I^i_{a-\ell}}(\uparrow_v^{-\ell} \tau_2 ) ),
\end{align*}
where the terms in the sum are understood to be $0$ whenever $a-\ell$ or $n_v - \ell$ has a strictly negative component. We now extend the deformed grafting product to $\tilde{\mathcal{V}}$ such that the post-Lie axioms are satisfied:

\begin{align}
a_{\dgraft}(I^i_a(\omega_1),I^j_b(\omega_2),I^k_c(\omega_3))-a_{\dgraft}(I^j_b(\omega_2),I^i_a(\omega_1),I^k_c(\omega_3))=&[I^i_a(\omega_1),I^j_b(\omega_2)] \dgraft I^k_c(\omega_3), \label{eq::DeformedPostLie1} \\
[I ^i_a(\omega_1) \dgraft I^j_b(\omega_2),I^k_c(\omega_3)] + [I^j_b(\omega_2),I^i_a(\omega_1) \dgraft I^k_c(\omega_3)]=&I^i_a(\omega_1) \dgraft [I^j_b(\omega_2),I^k_c(\omega_3)]. \label{eq::DeformedPostLie2}
\end{align}

Identity \eqref{eq::DeformedPostLie1} amounts to saying that deformed grafting a Lie polynomial means that each added edge is deformed individually, for example:
\begin{align*}
[I^i_a(\tau_1),I^j_b(\tau_2)] \dgraft {\color{blue} \Forest{[[\alpha,edge label = {node[midway,fill=white,scale=0.5]{$d''$}}]]}  }= \sum_{\ell_1,\ell_2} {\color{blue} \Forest{[[\alpha-\ell_1-\ell_2,edge label = {node[midway,fill=white,scale=0.5]{$d''$}}  [\tau_1,edge label = {node[midway,fill=white,scale=0.5]{$d-\ell_1$}}] [\tau_2,edge label = {node[midway,fill=white,scale=0.5]{$d'-\ell_2$}}] ]]} } - \sum_{\ell_1,\ell_2} {\color{blue} \Forest{[[\alpha-\ell_1-\ell_2,edge label = {node[midway,fill=white,scale=0.5]{$d''$}} [\tau_2,edge label = {node[midway,fill=white,scale=0.5]{$d'-\ell_2$}}][\tau_1,edge label = {node[midway,fill=white,scale=0.5]{$d-\ell_1$}}] ]]} },
\end{align*}

where $d-\ell_1 = (I^i,a-\ell_1)$ and $d'-\ell_2=(I^j,b-\ell_2)$. Let
\begin{align*}
\mathcal{V}=\tilde{\mathcal{V}} \oplus \langle \{X^i : i= (0,\dots,0,1,0,\dots,0) \} \rangle
\end{align*}
denote the vector space spanned by Lie polynomials of planted trees and by unit vectors in $\mathbb{N}^d$. Define a product $\dgraftt$ on $\mathcal{V}$ by
\begin{align*}
X^i \dgraftt y=&\uparrow^iy,\\
y \dgraftt X^i=&0,\\
X^i \dgraftt X^j =& 0,\\
y \dgraftt z=&y \dgraft z,
\end{align*}
for $y,z \in \tilde{\mathcal{V}}$. Furthermore define a Lie bracket by
\begin{align*}
[X^i,X^j]_0=&0,\\
[y,z]_0=&[y,z],\\
[y,X^i]_0=&\downarrow^iy,
\end{align*}
where $\downarrow^iy$ sums over the edges in $y$ that are adjacent to the root, and subtracts $i$ from one decoration per term, for example:
\begin{align*}
[[I^j_a(\omega_1),I^k_b(\omega_2)  ],X^i]_0=& \downarrow^i [I^j_a(\omega_1),I^k_b(\omega_2)]\\
=&[I^j_{a-i}(\omega_1),I^k_{b}(\omega_2)]+[I^j_{a}(\omega_1),I^k_{b-i}(\omega_2)].
\end{align*}

Note that this is equivalent to defining $\downarrow^i(I_a^j(\omega))=I_{a-i}^j(\omega)$ and extending to be a derivation for the Lie bracket $[\cdot,\cdot]$.

\begin{lemma} \label{lemma::AlmostDerivation}
The following identity holds:
\begin{align*}
\uparrow^i(y \dgraftt z) = (\uparrow^iy) \dgraftt z + y \dgraftt (\uparrow^i z) - (\downarrow^i y) \dgraftt z.
\end{align*}
\end{lemma}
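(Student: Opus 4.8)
The plan is to reduce, by bilinearity, to checking the identity on a basis of $\mathcal{V}=\tilde{\mathcal{V}}\oplus\langle\{X^i\}\rangle$, clearing first the terms that involve the generators $X^i$ and then treating the essential case of two planted trees. I use throughout that $\uparrow^i$ and $\downarrow^i$ annihilate each $X^j$, and that $\uparrow^i$ acts only on the non-root vertices of a planted tree, so it preserves $\tilde{\mathcal{V}}$. The $X$-terms are immediate: if $z=X^m$, both sides vanish, since the left side is $\uparrow^i(y\dgraftt X^m)=0$ and every right-hand summand carries a factor $(\cdot)\dgraftt X^m=0$ or $\uparrow^i X^m=0$; if $y=X^k$ and $z\in\tilde{\mathcal{V}}$, the right side collapses to $X^k\dgraftt(\uparrow^i z)=\uparrow^k\uparrow^i z$ while the left side is $\uparrow^i(X^k\dgraftt z)=\uparrow^i\uparrow^k z$, and these agree because raising two (possibly equal) vertex decorations by $i$ and by $k$ commutes. (Conceptually, the whole identity is the instance $x_1=X^i$ of the deformed post-Lie associator \eqref{eq::DeformedPostLie1} under $\uparrow^i(-)=X^i\dgraftt(-)$ and $\downarrow^i(-)=[\,-\,,X^i]_0$; but that axiom is not yet available over all of $\mathcal{V}$, so I argue directly from the definitions.)

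For the main case take planted trees $y=I_a(\sigma)$, $z=I_b(\tau)$ (suppressing the kernel labels), and expand
\begin{align*}
y\dgraft z=\sum_{v\in N_\tau}\sum_{\ell}\binom{n_v}{\ell}\,I_b\big(\sigma\graft^{v}_{a-\ell}(\uparrow_v^{-\ell}\tau)\big)=:\sum_{v,\ell}\binom{n_v}{\ell}\,I_b(\rho_{v,\ell}),
\end{align*}
apply $\uparrow^i$, and split the sum over the incremented vertex $w$ into $w\in N_\sigma$ and $w\in N_\tau$. Incrementing a guest vertex $w\in N_\sigma$ commutes with the grafting and leaves the host decorations (hence the binomials) untouched, so this part is exactly $(\uparrow^i y)\dgraft z$. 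Among the host terms $w\in N_\tau$, those with $w\neq v$ leave the decoration $n_v$ of the grafting site unchanged and reproduce verbatim the $w\neq v$ terms of $y\dgraft(\uparrow^i z)$.

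The crux is the diagonal $w=v$. There the left side produces the tree $\uparrow_v^i\rho_{v,\ell}$ with weight $\binom{n_v}{\ell}$, whereas in $y\dgraft(\uparrow^i z)$ the grafting site has first been raised to decoration $n_v+i$, producing the same tree with weight $\binom{n_v+i}{\ell}$. Their difference satisfies $\binom{n_v+i}{\ell}-\binom{n_v}{\ell}=\binom{n_v}{\ell-i}$, which is Pascal's rule and holds precisely because $i$ is a unit vector (for a general multi-index it would fail, by Vandermonde). After the reindexing $\ell\mapsto\ell+i$ one recognises $\sum_{v,\ell}\binom{n_v}{\ell-i}\,\uparrow_v^i\rho_{v,\ell}=I_{a-i}(\sigma)\dgraft I_b(\tau)=(\downarrow^i y)\dgraft z$, so the diagonal part of the left side equals the diagonal part of $y\dgraft(\uparrow^i z)$ minus $(\downarrow^i y)\dgraft z$. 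Adding the guest, off-diagonal and diagonal contributions then yields $(\uparrow^i y)\dgraft z+y\dgraft(\uparrow^i z)-(\downarrow^i y)\dgraft z$, the claimed identity. I expect this binomial bookkeeping, together with the observation that the unit-vector hypothesis is exactly what lets Pascal's rule close the computation, to be the main obstacle.

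Finally, I would extend from planted trees to all of $\tilde{\mathcal{V}}$ by induction on bracket length: writing $y=[y_1,y_2]$ and expanding $y\dgraft z$ through \eqref{eq::DeformedPostLie1} into iterated single graftings, one applies the inductive hypothesis together with the facts that $\uparrow^i$ and $\downarrow^i$ are derivations of $[\cdot,\cdot]_0$ and that $\dgraft$ satisfies \eqref{eq::DeformedPostLie1}--\eqref{eq::DeformedPostLie2} on $\tilde{\mathcal{V}}$. This reassembly is finite and routine, if tedious.
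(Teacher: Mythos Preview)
Your proposal is correct and follows essentially the same approach as the paper: the paper defers the planted-tree case to Proposition~3.1 of \cite{BrunedKatsetsiadis2022}, whose content is precisely the Pascal-rule binomial bookkeeping on the diagonal term that you spell out, and then reduces Lie polynomials to planted trees via \eqref{eq::DeformedPostLie1}, exactly as you do. You additionally dispose of the cases $y=X^k$ or $z=X^m$, which the paper leaves implicit but which are formally required since the statement is made for $\dgraftt$ on all of $\mathcal{V}$.
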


\begin{proof}
The same proof as for Proposition $3.1$ in \cite{BrunedKatsetsiadis2022} applies when $y$ is a planted tree. If $y$ is a Lie polynomial, we note that we can reduce it to the planted tree case by repeated application of identity \eqref{eq::DeformedPostLie1}.
\end{proof}

\begin{proposition}
$(\mathcal{V},\dgraftt,[\cdot,\cdot]_0)$ is a post-Lie algebra.
\end{proposition}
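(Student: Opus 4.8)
The plan is to verify directly the two post-Lie axioms \eqref{eq::PostLieAssociator} and \eqref{eq::PostLieDerivation} for the pair $(\dgraftt,[\cdot,\cdot]_0)$, having first checked that $(\mathcal{V},[\cdot,\cdot]_0)$ is a Lie algebra. The latter is a preliminary task: antisymmetry holds by the definition $[X^i,y]_0=-\downarrow^i y$, and the only non-trivial instances of the Jacobi identity are those containing an $X^i$, which close up because $\downarrow^i$ turns out to be a derivation of $[\cdot,\cdot]$ and because distinct $\downarrow$'s commute. For the two axioms themselves, since each is multilinear I would reduce to the case where every argument is a basis vector, hence either an element of $\tilde{\mathcal{V}}$ (a Lie polynomial of planted trees) or one of the generators $X^i$. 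When all three arguments lie in $\tilde{\mathcal{V}}$ the two axioms are \emph{exactly} the relations \eqref{eq::DeformedPostLie1} and \eqref{eq::DeformedPostLie2} used to extend $\dgraft$, together with $\dgraftt|_{\tilde{\mathcal{V}}}=\dgraft$ and $[\cdot,\cdot]_0|_{\tilde{\mathcal{V}}}=[\cdot,\cdot]$. So all the genuine content lies in the cases where at least one argument is some $X^i$, and the argument will run exactly parallel to the non-planar one in \cite{BrunedKatsetsiadis2022}.

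Before the case analysis I would record a few elementary facts about $\uparrow^i$ and $\downarrow^i$. Since $\uparrow^i$ only changes (non-root) vertex decorations and $\downarrow^i$ only changes root-adjacent edge decorations, and since adding or subtracting vectors of $\mathbb{N}^d$ commutes, one has $\uparrow^i\uparrow^j=\uparrow^j\uparrow^i$, $\downarrow^i\downarrow^j=\downarrow^j\downarrow^i$, and $\uparrow^i\downarrow^j=\downarrow^j\uparrow^i$ (the last because the two operators act on disjoint data). Moreover both $\uparrow^i$ and $\downarrow^i$ are derivations of $[\cdot,\cdot]$: under the realization \eqref{eq::ForestAsTree} the non-root vertices, resp. the root-adjacent edges, of $[y,z]$ split into those inherited from $y$ and those inherited from $z$, and the operator merely sums over them. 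Finally, for $x\in\tilde{\mathcal{V}}$ the deformed grafting attaches $x$ strictly above the planting edges of its right argument and so never alters a root-adjacent edge; hence $\downarrow^j$ commutes with $x\dgraftt(-)$. Each of these reduces, via \eqref{eq::DeformedPostLie1}, from a general Lie polynomial to a single planted tree, where it is immediate.

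With these in hand the derivation axiom \eqref{eq::PostLieDerivation} is routine. Using the antisymmetry of the identity under $y\leftrightarrow z$ to halve the work, the case $x=X^i$ with $y,z$ tree-type is precisely the statement that $\uparrow^i$ is a Lie derivation; the case $x$ tree-type with $y=X^j$ reduces to $\downarrow^j$ commuting with $x\dgraftt(-)$; and the mixed case $x=X^i,\,y=X^j$ reduces to $\uparrow^i\downarrow^j=\downarrow^j\uparrow^i$. All remaining configurations (two or three $X$'s, or an $X$ in the last slot) collapse to $0$ on both sides, because $(-)\dgraftt X^k=0$ and $[X^i,X^j]_0=0$.

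The associator axiom \eqref{eq::PostLieAssociator} is where the real work sits, and I expect the main obstacle to be the single case in which one of the first two slots is an $X^j$ (the other being $y\leftrightarrow x$ symmetric, since \eqref{eq::PostLieAssociator} is antisymmetric in its first two arguments). There the left-hand side is $(\downarrow^j x)\dgraftt z$, while the right-hand side collapses to $x\dgraftt(\uparrow^j z)-\uparrow^j(x\dgraftt z)+(\uparrow^j x)\dgraftt z$; substituting Lemma \ref{lemma::AlmostDerivation} for the middle term produces a three-term cancellation that leaves exactly $(\downarrow^j x)\dgraftt z$. This is the crux: the non-trivial $-(\downarrow^i y)\dgraftt z$ correction in Lemma \ref{lemma::AlmostDerivation} is precisely what makes the identity hold, and it is the only place the deformation enters essentially. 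The configuration $x=X^i,\,y=X^j,\,z$ tree-type then follows from $\uparrow^i\uparrow^j=\uparrow^j\uparrow^i$, and every configuration with an $X$ in the third slot, or with at least two $X$'s in a way not yet covered, gives $0=0$. The secondary bookkeeping obstacle throughout is lifting the derivation and commutation facts of the second paragraph from single planted trees to arbitrary Lie polynomials, which I would handle uniformly by repeated use of \eqref{eq::DeformedPostLie1} and the realization \eqref{eq::ForestAsTree}; planarity affects only the combinatorial pictures, not the algebraic identities.
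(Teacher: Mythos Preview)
Your proposal is correct and follows essentially the same route as the paper: the key non-trivial case is the associator identity with one of the first two arguments equal to some $X^j$, and you resolve it exactly as the paper does, by invoking Lemma~\ref{lemma::AlmostDerivation}. Your treatment is somewhat more thorough---you explicitly verify that $[\cdot,\cdot]_0$ is a Lie bracket and record the commutation and derivation properties of $\uparrow^i,\downarrow^i$ up front---whereas the paper simply declares the remaining cases ``clear'' and handles only the mixed case in detail, but the substance is the same.
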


\begin{proof}
The identities
\begin{align*}
a(x,y,z)-a(y,x,z)=&[x,y]_0\dgraftt z,\\
z \dgraftt [x,y]_0=&[z\dgraftt x,y]_0 + [x,z \dgraftt y]_0
\end{align*}
are clear if $x=X^i$ and $y=X^j$, or if both $x,y$ are in $\tilde{\mathcal{V}}$. They are also clear if $z = X^j$. If $x,z\in \tilde{\mathcal{V}}$ and $y=X^i$, then by Lemma \ref{lemma::AlmostDerivation}:
\begin{align*}
a(x,y,z)-a(y,x,z)=&x \dgraftt \uparrow^i z - \uparrow^i (x \dgraftt z) + (\uparrow^i x)\dgraftt z\\
=&\downarrow^i x \dgraftt z \\
=&[x,y]_0 \dgraftt z.
\end{align*}
Furthermore:
\begin{align*}
z \dgraftt [x,y]_0=& z \dgraftt (\downarrow^i x)\\
=&\downarrow^i (z \dgraftt x) + [x,0]_0\\
=&[z \dgraftt x,y]_0 + [x,z \dgraftt y]_0,
\end{align*}
where we used the identity $z \dgraftt (\downarrow^i x)=\downarrow^i (z \dgraftt x)$, which comes from the fact that $\dgraftt$ does not add any edges adjacent to the root.
\end{proof}

Because $(\mathcal{V},\dgraftt,[\cdot,\cdot]_0)$ is a post-Lie algebra we can apply the planar Guin-Oudom construction from \cite{EbrahimiFardLundervoldMuntheKaas2014} to extend $\dgraftt$ to the Lie enveloping algebra of $\mathcal{V}$. This is a generalisation of the Guin-Oudom construction for pre-Lie algebras from \cite{GuinOudom2008}.

\begin{lemma}
The Lie enveloping algebra of $(\mathcal{V},\dgraftt,[\cdot,\cdot]_0)$ can be represented by the space $\mathfrak{T}^+$.
\end{lemma}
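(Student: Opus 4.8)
The plan is to identify $U(\mathcal{V})$, the universal enveloping algebra of the Lie algebra $(\mathcal{V},[\cdot,\cdot]_0)$, with $\mathfrak{T}^+$ at the level of graded vector spaces, by reading off a basis of each side and exhibiting a bijection. The starting observation is that $(\mathcal{V},[\cdot,\cdot]_0)$ is a semidirect sum: $\tilde{\mathcal{V}}$ is a Lie ideal, since $[y,X^i]_0=\downarrow^i y\in\tilde{\mathcal{V}}$ and $[\tilde{\mathcal{V}},\tilde{\mathcal{V}}]_0=[\tilde{\mathcal{V}},\tilde{\mathcal{V}}]\subseteq\tilde{\mathcal{V}}$, while $\langle X^1,\dots,X^d\rangle$ is an abelian subalgebra acting on $\tilde{\mathcal{V}}$ by the commuting derivations $-\downarrow^i$. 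I would first record that the $\downarrow^i$ pairwise commute and are derivations of the free Lie bracket, as already noted after the definition of $[\cdot,\cdot]_0$.

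Next I would treat the two pieces separately. Since $\tilde{\mathcal{V}}$ is by definition the free Lie algebra on the set $\mathcal{P}$ of planted trees, its enveloping algebra $U(\tilde{\mathcal{V}})$ is the free associative algebra $T(\mathcal{P})$, whose basis is the set of words in $\mathcal{P}$. A word $I^{i_1}_{a_1}(\tau_1)\cdots I^{i_k}_{a_k}(\tau_k)$ corresponds to the planted forest in $\tilde{\mathfrak{T}}^+$ obtained by grafting these planted trees, in this planar order, onto a common zero-decorated root; this is a bijection because a tree with zero root decoration and no noise edge at the root is determined precisely by the ordered list of its branches. The identification \eqref{eq::ForestAsTree} is exactly the restriction of this correspondence to Lie elements, so that $\tilde{\mathcal{V}}\hookrightarrow\tilde{\mathfrak{T}}^+$ sits inside $U(\tilde{\mathcal{V}})$ as its primitive elements, consistent with $U(\mathrm{Lie}(\mathcal{P}))=T(\mathcal{P})$. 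The non-commutativity of concatenation matching the planar order of branches is the feature that makes the planar (free-Lie) case work, in contrast to the symmetric, abelian-bracket, non-planar case. On the abelian side, $U(\langle X^i\rangle)\cong\mathbb{R}[X^1,\dots,X^d]$ has basis the monomials $X^{n_0}$, $n_0\in\mathbb{N}^d$, which I would match with root decorations.

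Then I would invoke the PBW (Poincar\'e--Birkhoff--Witt) theorem for the semidirect decomposition: ordering a homogeneous basis of $\mathcal{V}$ so that the $X^i$ follow all planted-tree basis elements, every element of $U(\mathcal{V})$ is a unique linear combination of ordered monomials $u\,X^{n_0}$ with $u\in\mathrm{basis}(U(\tilde{\mathcal{V}}))$ and $n_0\in\mathbb{N}^d$; equivalently $U(\mathcal{V})\cong U(\tilde{\mathcal{V}})\otimes\mathbb{R}[X]$ as vector spaces. Sending $u\,X^{n_0}$ to the planted forest representing $u$ with its root decoration raised to $n_0$ gives a bijection onto a basis of $\mathfrak{T}^+$, since a tree in $\mathfrak{T}^+$ is uniquely specified by its root decoration $n_0\in\mathbb{N}^d$ together with the planar list of its (positive-regularity) planted branches. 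As the regularity is additive over edges and vertices, this bijection is graded, so it extends to the claimed vector-space isomorphism $U(\mathcal{V})\cong\mathfrak{T}^+$.

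The main obstacle is the bookkeeping hidden in the PBW step. Because $[y,X^i]_0=\downarrow^i y$ is nonzero, reordering a general product into the normal form $u\,X^{n_0}$ produces correction terms in which an $X^i$ has been absorbed into an edge decoration via $\downarrow^i$. I would control this by filtering by the number of $X$-factors (equivalently the total $X$-degree): each commutation strictly lowers this count, so the change of basis from naive products to ordered monomials is unitriangular and hence invertible, which is what guarantees that the resulting map to $\mathfrak{T}^+$ is a bijection rather than merely a surjection. A secondary point to check is the well-definedness and injectivity of \eqref{eq::ForestAsTree} --- that the tree differences satisfy antisymmetry and the Jacobi identity --- so that $\tilde{\mathcal{V}}$ genuinely embeds as claimed; this is underwritten by Lemma \ref{lemma::AlmostDerivation} and the preceding proposition, which established that $(\mathcal{V},\dgraftt,[\cdot,\cdot]_0)$ is a post-Lie, and in particular a Lie, algebra.
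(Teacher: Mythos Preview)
Your proof is correct and follows essentially the same strategy as the paper: both identify $U(\tilde{\mathcal{V}})$ with planted forests via the free-Lie/free-associative correspondence, identify the abelian $X$-part with root decorations, and obtain a normal form (you invoke PBW for the semidirect sum explicitly, the paper reorders by hand using $\tau X^i = X^i\tau + \downarrow^i\tau$). One small point: you place the $X$'s on the right in your normal form $u\,X^{n_0}$, whereas the paper places them on the left and represents $X^m\tau_1\cdots\tau_k$ by the forest with root decoration $m$; both give valid linear isomorphisms, but the paper's subsequent formulas (in particular \eqref{eq::DeformedGrossmanLarson}) are written in the left convention, so your identification differs from the one actually used by $\downarrow$-correction terms.
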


\begin{proof}
It is clear that $X^i$'s commute under the associative product on $\mathcal{U}(\mathcal{V})$, as their commutator Lie bracket is abelian. Let $X^m$ be a commutative polynomial of $X^i$'s whose indices adds up to $m \in( \mathbb{N}^d)^{\times 2}$ and represent this object with a single vertex decoared by $m$. It is furthermore clear by the identification \eqref{eq::ForestAsTree} that we can represent the associative product of two planted trees by merging the roots of the trees letting the branches of the left argument be to the left of the branches of the right argument. It remains to represent the associative product between single vertices and planted trees. Let $X^i\tau$ be represented by the planted tree $\tau$, except with the root decorated by $i$. Note that this implies that $\tau X^i$ is represented by $\tau X^i=X^i\tau - [X^i,\tau]_0 = X^i \tau + \downarrow^i \tau$. Since $\downarrow^i \tau$ is again a planted tree, this means that we can always write any element in $\mathcal{U}(\mathcal{V})$ as a sum of elements of the form $X^m \tau_1 \dots \tau_k$, which we represent by the tree $\tau_1 \dots \tau_k$, whose root has decoration $m$.
\end{proof}

We endow the space $\mathfrak{T}^+$ with the cocommutative coproduct $\Delta_{\shuffle}$, defined by letting $\mathcal{V}$ be primitive elements and then generating as an algebra morphism for the associative product. We extend $\dgraftt$ to the Lie enveloping algebra by:

\begin{align}
1 \dgraftt \omega =& \omega,\label{eq::DAlg1} \\
\tau_1\omega \dgraftt \tau_2 =& \tau_1 \dgraftt (\omega \dgraftt \tau_2) - (\tau_1 \dgraftt \omega) \dgraftt \tau_2,\label{eq::DAlg2}\\
\omega_1 \dgraftt (\omega_2\omega_3) =& ((\omega_1)_{(1)} \dgraftt \omega_2)((\omega_1)_{(2)} \dgraftt \omega_3), \label{eq::DAlg3}
\end{align}

where $\Delta_{\shuffle}(\omega_1)=(\omega_1)_{(1)} \otimes (\omega_1)_{(2)}$ is Sweedler's notation, $\tau_1,\tau_2 \in \mathcal{V}$ and $\omega,\omega_1,\omega_2,\omega_3 \in \mathfrak{T}^+$.

\begin{lemma}
Let $X^m \omega_1, X^n \omega_2$ be two generic elements in $\mathfrak{T}^+$, with $\omega_1,\omega_2$ planted forests. Then:
\begin{align*}
X^m \omega_1 \dgraftt X^n \omega_2 = X^n(X^m \omega_1 \dgraftt \omega_2).
\end{align*}
\end{lemma}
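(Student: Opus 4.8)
The plan is to reduce everything to the Guin--Oudom relation \eqref{eq::DAlg3} together with a single auxiliary fact: that deformed grafting anything onto a pure power of unit vectors is inert. Concretely, I would first record the factorization $X^n\omega_2 = X^n\cdot\omega_2$ as a genuine associative product in $\mathfrak{T}^+=\mathcal{U}(\mathcal{V})$; this is exactly the representation convention from the previous lemma, where left multiplication by $X^n$ decorates the root by $n$. Applying \eqref{eq::DAlg3} with $\omega_2$ replaced by $X^n$ and $\omega_3$ by $\omega_2$ then gives
\[
X^m\omega_1 \dgraftt (X^n\omega_2) = \bigl((X^m\omega_1)_{(1)}\dgraftt X^n\bigr)\,\bigl((X^m\omega_1)_{(2)}\dgraftt \omega_2\bigr).
\]

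The heart of the argument is the claim that $w\dgraftt X^n=\epsilon(w)\,X^n$ for every $w\in\mathfrak{T}^+$, where $\epsilon$ is the counit of $\Delta_{\shuffle}$. I would first prove the single-vector version $w\dgraftt X^i=\epsilon(w)\,X^i$ by induction on the length of $w$ (its number of root branches in the PBW expansion). The length-zero and length-one cases are the defining relations $1\dgraftt X^i=X^i$, $X^j\dgraftt X^i=0$ and $y\dgraftt X^i=0$ for $y\in\tilde{\mathcal{V}}$. For $w=\tau w''$ with $\tau\in\mathcal{V}$ a generator, relation \eqref{eq::DAlg2} splits $w\dgraftt X^i$ into $\tau\dgraftt(w''\dgraftt X^i)$ and $-(\tau\dgraftt w'')\dgraftt X^i$; the first vanishes because $w''\dgraftt X^i=\epsilon(w'')X^i$ by induction and $\tau\dgraftt X^i=0$, while the second vanishes because $\tau\dgraftt w''$ has the same length as $w''$ (the primitive $\tau$ acts by the Leibniz rule and is absorbed into existing branches) so the inductive hypothesis applies, and $\epsilon(\tau\dgraftt w'')=0$ since grafting the nonzero generator $\tau$ produces only trees containing the vertices of $\tau$, never the empty forest. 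The full statement $w\dgraftt X^n=\epsilon(w)X^n$ then follows by writing $X^n=X^{i_1}\cdots X^{i_r}$, iterating \eqref{eq::DAlg3}, and collapsing the resulting product of factors $\epsilon(w_{(j)})X^{i_j}$ via the counit axiom.

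Substituting $(X^m\omega_1)_{(1)}\dgraftt X^n=\epsilon\bigl((X^m\omega_1)_{(1)}\bigr)X^n$ into the displayed identity and pulling the fixed factor $X^n$ out of the bilinear associative product leaves $X^n\cdot\bigl(\sum \epsilon((X^m\omega_1)_{(1)})\,(X^m\omega_1)_{(2)}\dgraftt \omega_2\bigr)$, which by the counit axiom equals $X^n\bigl(X^m\omega_1\dgraftt\omega_2\bigr)$, as desired. The main obstacle I anticipate is the bookkeeping around non-commutativity: the $X^i$ do not commute past planted forests (the previous lemma gives $\tau X^i=X^i\tau+\downarrow^i\tau$), so I must check that the inert factor $X^n$ genuinely lands on the \emph{left} of the product coming from \eqref{eq::DAlg3} and hence decorates the root rather than being reabsorbed; this is ensured precisely because in \eqref{eq::DAlg3} the first tensor leg is the one grafted onto $\omega_2=X^n$ and therefore appears as the left factor. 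A secondary point to verify is that $X^m\omega_1\dgraftt\omega_2$ retains root decoration zero, which holds because the zero-decorated root of the planted forest $\omega_2$ forces $\binom{n_v}{\ell}=\binom{0}{\ell}$ to kill any decoration shift there.
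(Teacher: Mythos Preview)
Your argument is correct and follows exactly the route the paper sketches: apply \eqref{eq::DAlg3} to the factorisation $X^n\omega_2 = X^n\cdot\omega_2$, then use that $w\dgraftt X^n = \epsilon(w)\,X^n$ (which you establish by induction on length via \eqref{eq::DAlg2}) to collapse the Sweedler sum through the counit. This is precisely what the paper's one-line proof ``by \eqref{eq::DAlg2}, \eqref{eq::DAlg3} and the fact that anything grafted on an $X^i$ is zero'' abbreviates.

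One remark: your closing ``secondary point'' is both unnecessary and incorrect. The element $X^m\omega_1\dgraftt\omega_2$ does \emph{not} in general retain zero root decoration; the very next lemma in the paper gives $X^m\omega_1\dgraftt\omega_2 = \uparrow^m(\omega_1\dgraftt\omega_2) - (\uparrow^m\omega_1)\dgraftt\omega_2$, and $\uparrow^m$ ranges over all non-noise vertices including the root. Fortunately this is irrelevant to the proof: the product $X^n\cdot(X^m\omega_1\dgraftt\omega_2)$ lives in $\mathcal{U}(\mathcal{V})$ and is perfectly well defined regardless of the root decoration of the right factor, so you can simply delete that paragraph.
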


\begin{proof}
This follows by equations \eqref{eq::DAlg2},\eqref{eq::DAlg3} and the fact that anything grafted on an $X^i$ is zero.
\end{proof}

We now want to describe the expression $X^m \omega_1 \dgraftt \omega_2$. To do this, we first extend the operators $\uparrow^m$ and $\downarrow^m$ to arbitrary $m \in \mathbb{N}^d$ and acting on the whole $\tilde{\mathfrak{T}}$. For $\omega \in \tilde{\mathfrak{T}}$, let $\uparrow^m\omega$ be defined by composition $\uparrow^m\omega=(\uparrow^{(1,0,\dots,0)})^{m_1}\circ \dots \circ (\uparrow^{(0,\dots,0,1)})^{m_d}\omega$. Similarly for $\downarrow^m$. Note that this is the same as summing over all ways to increase the decorations of vertices in $\omega$, weighted by the number of ways to do this increase by adding a unit at a time. We illustrate this with an example where $\mathbb{N}^d=\mathbb{N}^1$:
\begin{align*}
\uparrow^2 {\color{blue} \Forest{[k_1[k_2,edge label = {node[midway,fill=white,scale=0.5]{$a$}}]]} }=& \uparrow^1({\color{blue} \Forest{[k_1+1[k_2,edge label = {node[midway,fill=white,scale=0.5]{$a$}}]]} } +{\color{blue} \Forest{[k_1[k_2+1,edge label = {node[midway,fill=white,scale=0.5]{$a$}}]]} }  )\\
=&{\color{blue} \Forest{[k_1+2[k_2,edge label = {node[midway,fill=white,scale=0.5]{$a$}}]]} } + {\color{blue} \Forest{[k_1+1[k_2+1,edge label = {node[midway,fill=white,scale=0.5]{$a$}}]]} } + {\color{blue} \Forest{[k_1+1[k_2+1,edge label = {node[midway,fill=white,scale=0.5]{$a$}}]]} } + {\color{blue} \Forest{[k_1[k_2+2,edge label = {node[midway,fill=white,scale=0.5]{$a$}}]]} }\\
=&\sum_{2=\delta_1+\delta_2} {2 \choose \delta_1,\delta_2}  {\color{blue} \Forest{[k_1+\delta_1[k_2+\delta_2,edge label = {node[midway,fill=white,scale=0.5]{$a$}}]]} }.
\end{align*}
We furthermore have the identity $X^m\dgraftt \omega = \uparrow^m \omega$, which is straightforward to see from equation \eqref{eq::DAlg2}.

\begin{lemma}
The following identity holds:
\begin{align*}
 X^m \omega_1 \dgraftt \omega_2 =& \uparrow^m(\omega_1 \dgraftt \omega_2) - (\uparrow^m \omega_1) \dgraftt \omega_2 \\
=& \omega_1 \dgraftt (\uparrow^m \omega_2) - (\downarrow^m \omega_1 )\dgraftt \omega_2 .
\end{align*}
\end{lemma}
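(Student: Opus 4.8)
The plan is to prove the two equalities separately: the first expresses a ``deformed Leibniz rule'' for the left action of the root decoration $X^m$, while the second is an iterated form of the almost-derivation identity of Lemma \ref{lemma::AlmostDerivation}. In both cases I would induct on the total degree $|m|=\sum_i m_i$, reducing a general $m\in\mathbb{N}^d$ to the unit vectors $e_i$ that generate it, and I would invoke the preceding lemma ($X^m\omega_1\dgraftt X^n\omega_2=X^n(X^m\omega_1\dgraftt\omega_2)$) to factor out the right-hand root decoration, so that without loss of generality $\omega_2$ is a planted forest.

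For the first equality the base case $|m|=1$, i.e.\ $m=e_i$, is immediate: since $X^{e_i}\in\mathcal{V}$, relation \eqref{eq::DAlg2} with $\tau_1=X^{e_i}$, $\omega=\omega_1$ and $\tau_2=\omega_2$, together with the identity $X^{e_i}\dgraftt\psi=\uparrow^{e_i}\psi$, gives exactly $X^{e_i}\omega_1\dgraftt\omega_2=\uparrow^{e_i}(\omega_1\dgraftt\omega_2)-(\uparrow^{e_i}\omega_1)\dgraftt\omega_2$. For the inductive step I would write $X^m\omega_1=X^{e_i}\,(X^{m-e_i}\omega_1)$ and apply \eqref{eq::DAlg2} once more, pushing $X^{e_i}\dgraftt$ through the factors by means of \eqref{eq::DAlg3} and using $X^{e_i}\dgraftt X^{e_j}=0$ to discard the terms that would otherwise act on the root decoration. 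The composition law $\uparrow^{e_i}\circ\uparrow^{m-e_i}=\uparrow^{m}$ — which holds because the $\uparrow^{e_j}$ commute and their iterated composition reproduces precisely the multinomial weights in the definition of $\uparrow^m$ — is what allows the resulting terms to be reassembled into the stated two-term form.

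For the second equality the case $m=e_i$ is nothing other than Lemma \ref{lemma::AlmostDerivation} rewritten as $\uparrow^{e_i}(\omega_1\dgraftt\omega_2)-(\uparrow^{e_i}\omega_1)\dgraftt\omega_2=\omega_1\dgraftt(\uparrow^{e_i}\omega_2)-(\downarrow^{e_i}\omega_1)\dgraftt\omega_2$. Two extensions are then required. First I would upgrade Lemma \ref{lemma::AlmostDerivation} from Lie polynomials to arbitrary planted forests $\omega_1,\omega_2$, inducting on the number of trees in the forest and using that $\uparrow^{e_i}$ and $\downarrow^{e_i}$ act as derivations for the forest product while $\dgraftt$ obeys \eqref{eq::DAlg3}. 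Second, I would iterate the single-unit identity along the decomposition $\uparrow^m=(\uparrow^{e_1})^{m_1}\circ\cdots\circ(\uparrow^{e_d})^{m_d}$, each application peeling off one unit and generating a $\downarrow^{e_i}$-correction, and verify that these corrections assemble into the single operator $\downarrow^m$ carrying the correct coefficients.

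The main obstacle I anticipate is exactly this combinatorial bookkeeping in the inductive steps: because $X^m$ is not primitive for $|m|>1$, peeling off units produces several cross-terms, and one must check that the multinomial weights generated by composing the $\uparrow^{e_i}$ (and the $\downarrow^{e_i}$-corrections) agree on both sides so that only the claimed two-term expressions remain. A secondary, more technical point is to keep precise track of which vertices $\uparrow^m$ and $\downarrow^m$ act on — in particular that grafting leaves the root decoration carried by $X^m$ untouched, as encoded by $X^{e_i}\dgraftt X^{e_j}=0$ and by the preceding lemma — so that the forest extension of the almost-derivation is applied to genuinely planted arguments at every stage of the induction.
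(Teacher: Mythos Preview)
Your proposal is correct and uses the same ingredients as the paper—relations \eqref{eq::DAlg2}, \eqref{eq::DAlg3} and Lemma \ref{lemma::AlmostDerivation}—with only a minor organisational difference: the paper first treats the case $\omega_2\in\tilde{\mathcal V}$ (where the identities follow directly from \eqref{eq::DAlg2} and Lemma \ref{lemma::AlmostDerivation}) and then passes to general $\omega_2$ via \eqref{eq::DAlg3}, observing that distributing $X^m$ over the branches of $\omega_2$ reproduces exactly the multinomial weights in $\uparrow^m$, whereas you induct on $|m|$ throughout. Both schemes amount to the same computation; your presentation is more explicit about the combinatorial bookkeeping that the paper's proof sketches in one sentence.
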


\begin{proof}
If $\omega_2 \in \tilde{\mathcal{V}}$, then the statements follows from equation \eqref{eq::DAlg2} and Lemma \ref{lemma::AlmostDerivation}. If $\omega_2$ is not primitive, then note that iterating equation \eqref{eq::DAlg3} says that we have to split $\omega_2$ into all of its branches and then look at all possible ways to pair up $X^{m_i}$'s and branches of $\omega_1$ with the branches of $\omega_2$, with $\sum X^{m_i}=X^m$. But splitting $X^m$ into $X^{m_i}$'s and summing over all ways to apply these to different branches, is the same as applying $X^m$ to the whole tree.
\end{proof}

\begin{lemma}
The following identity holds:
\begin{align*}
X^m \omega X^n =\sum_{r_1+r_2=n} {n \choose r_2} (X^{m+r_1}\downarrow^{r_2}\omega).
\end{align*}
\end{lemma}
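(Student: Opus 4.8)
The plan is to move the right-hand factor $X^n$ leftward past the planted forest $\omega$, one unit vector at a time, so that each unit vector either increments the root decoration on the left or is converted into a $\downarrow$ acting on the root-adjacent edges of $\omega$. Everything rests on the commutation relation recorded in the representation lemma, namely $\tau X^i = X^i\tau + \downarrow^i\tau$ for a unit vector $i$ and a single planted tree $\tau$. The first thing I would do is promote this relation to an arbitrary (decorated) planted forest in place of $\tau$. In the enveloping algebra the map $[\,\cdot\,,X^i]=(\cdot)X^i-X^i(\cdot)$ is a derivation of the associative forest product, and $\downarrow^i$ is also a derivation of that product, since the root-adjacent edges of a merged-root forest are the disjoint union of those of its factors. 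As the two derivations agree on single planted trees, they agree on all planted forests, and since $\downarrow^i$ ignores the root decoration it commutes with prepending an $X^a$; hence $\psi X^i = X^i\psi + \downarrow^i\psi$ for every $\psi = X^a\chi$.

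The core step is the auxiliary identity
\begin{align*}
\omega X^n = \sum_{s_1+s_2 = n}\binom{n}{s_2}X^{s_1}\downarrow^{s_2}\omega,
\end{align*}
which I would prove by induction on the total degree $|n|$. The base case $n=0$ is immediate. For the inductive step I write $X^{n+e_l}=X^nX^{e_l}$ for a unit vector $e_l$, apply the inductive hypothesis to $\omega X^n$, and push the extra $X^{e_l}$ past each summand: the leftmost $X^{s_1}$ commutes with everything, so the commutation relation gives $(X^{s_1}\downarrow^{s_2}\omega)X^{e_l}=X^{s_1+e_l}\downarrow^{s_2}\omega + X^{s_1}\downarrow^{s_2+e_l}\omega$. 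Collecting the coefficient of a fixed monomial $X^{u_1}\downarrow^{u_2}\omega$ with $u_1+u_2=n+e_l$ produces $\binom{n}{u_2}+\binom{n}{u_2-e_l}$, and the multinomial Pascal rule $\binom{n+e_l}{u_2}=\binom{n}{u_2}+\binom{n}{u_2-e_l}$ (ordinary Pascal in the $l$-th component, times the unchanged factors) closes the induction. Here the term with a negative component of $u_2-e_l$ carries a vanishing binomial coefficient and is correctly absent.

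Finally I would left-multiply by $X^m$. Because the $X^i$ are pairwise commuting and compose additively in their index, $X^mX^{s_1}=X^{m+s_1}$, giving
\begin{align*}
X^m\omega X^n = \sum_{s_1+s_2=n}\binom{n}{s_2}X^{m+s_1}\downarrow^{s_2}\omega,
\end{align*}
which is the asserted formula after renaming $s_1=r_1$ and $s_2=r_2$. The only genuinely delicate point is the multinomial bookkeeping in the inductive step; once the commutation relation has been upgraded to forests, the remainder is a routine Pascal-rule computation, and the additive behaviour of the $X^i$ makes the final contraction into $X^{m+r_1}$ automatic.
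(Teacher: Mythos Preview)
Your proof is correct and follows essentially the same strategy as the paper: push the unit-vector factors of $X^n$ leftward past $\omega$ one at a time, using the commutation relation $\tau X^i = X^i\tau + \downarrow^i\tau$. The only cosmetic difference is that the paper first commutes each $X^{n_q}$ past each primitive tree factor $\tau_j$ of $\omega$ separately (thereby implicitly verifying that $\downarrow^i$ is a derivation on the forest product) and then simply says ``repeat'' to obtain the multinomial coefficients, while you first upgrade the commutation relation to planted forests via the derivation argument and then carry out the Pascal-rule induction explicitly; neither choice adds or removes anything substantive.
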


\begin{proof}
We can write $\omega = \tau_1 \dots \tau_k$ and $X^n=X^{n_1}\dots X^{n_p}$, with $\tau_j$'s and $X^{n_q}$'s primitive, then:
\begin{align*}
X^m \omega X^n =& X^m \tau_1 \dots \tau_k X^n \\
=& X^m\tau_1 \dots \tau_{k-1}X^{n_1} \tau_k X^{n_2}\dots X^{n_p} + X^m \tau_1 \dots \tau_{k-1}(\downarrow^{n_1} \tau_k)X^{n_2}\dots X^{n_p}\\
=&X^m \tau_1 \dots \tau_{k-2}X^{n_1} \tau_{k-1} \tau_kX^{n_2}\dots X^{n_p} + X^m \tau_1 \dots \tau_{k-2} (\downarrow^{n_1} \tau_{k-1}) \tau_k X^{n_2}\dots X^{n_p}\\
+& X^m \tau_1 \dots \tau_{k-1}(\downarrow^{n_1} \tau_k)X^{n_2}\dots X^{n_p}\\
=& \dots \\
=& X^{m+n_1}\tau_1 \dots \tau_k  X^{n_2}\dots X^{n_p} + X^m (\downarrow^{n_1}\tau_1 \dots \tau_k  )X^{n_2}\dots X^{n_p}\\
=&X^{m+n_1}\omega X^{n_2}\dots X^{n_p} + X^m(\downarrow^{n_1}\omega)X^{n_2}\dots X^{n_p}.
\end{align*}
Repeat the process for each $X^{n_q}$ to get the statement.
\end{proof}

We now define the deformed Grossman--Larson product $\ast_+$ by:

\begin{align*}
(X^m \omega_1) \ast_+ (X^n\omega_2)=&(X^m \omega_1)_{(1)}\cdot ((X^m \omega_1)_{(2)} \dgraftt X^n\omega_2)
\end{align*}

Now using the above lemmas, we can rewrite the above expression:

\begin{align*}
(X^m \omega_1)_{(1)}\cdot ((X^m \omega_1)_{(2)} \dgraftt X^n\omega_2)=&\sum_{m_1+m_2=m \atop n_1+n_2=n} {n \choose n_2} X^{m_1+n_1}(\downarrow^{n_2}(\omega_1)_{(1)}) \cdot (X^{m_2}(\omega_1)_{(2)}\dgraftt \omega_2  ) \\
=&\sum_{m_1+m_2=m \atop n_1+n_2=n} {n \choose n_2} X^{m_1+n_1}(\downarrow^{n_2}(\omega_1)_{(1)})\cdot ( (\omega_1)_{(2)}\dgraftt (\uparrow^{m_2} \omega_2) - (\downarrow^{m_2}(\omega_1)_{(2)})\dgraftt \omega_2 ).
\end{align*}

Finally, we introduce the notation $\uparrow^m_{N_{\omega_2}}$ to mean $\uparrow^m$ acting only on the vertices of $\omega_2$, including the root. Then we can rewrite:

\begin{align}
(X^m \omega_1) \ast_+ (X^n\omega_2)=\uparrow^m_{N_{\omega_2}} \sum_{n=n_1+n_2} {n \choose n_2} X^{n_1} (\downarrow^{n_2}(\omega_1)_{(1)}) \cdot ( (\omega_1)_{(2)} \dgraftt \omega_2 ). \label{eq::DeformedGrossmanLarson}
\end{align}

\begin{proposition}
$(\mathfrak{T}^+,\ast_+,\Delta_{\shuffle})$ is a Hopf algebra.
\end{proposition}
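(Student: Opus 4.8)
The plan is to recognise this statement as a direct instance of the post-Lie Guin--Oudom construction \cite{GuinOudom2008, EbrahimiFardLundervoldMuntheKaas2014}, applied to the post-Lie algebra $(\mathcal{V}, \dgraftt, [\cdot,\cdot]_0)$ established above. The general theorem asserts that, starting from any post-Lie algebra $(\mathfrak{g}, \triangleright, [\cdot,\cdot])$, extending $\triangleright$ to $U(\mathfrak{g})$ by the relations \eqref{eq::DAlg1}--\eqref{eq::DAlg3} and defining the Grossman--Larson product $a \ast b = a_{(1)}\cdot(a_{(2)}\triangleright b)$ yields a Hopf algebra $(U(\mathfrak{g}), \ast, \Delta)$ with the standard cocommutative coproduct $\Delta$ of the enveloping algebra. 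Since we have already identified $\mathfrak{T}^+$ with $U(\mathcal{V})$, its associative product $\cdot$ with the enveloping product, $\Delta_{\shuffle}$ with the coproduct making $\mathcal{V}$ primitive, and $\ast_+$ with the Grossman--Larson product, the proposition follows once this theorem is invoked.

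Concretely, I would organise the verification in three steps. First, associativity of $\ast_+$: this is the substantive part of the Guin--Oudom theorem and relies on the extended $\dgraftt$ turning $\mathfrak{T}^+$ into a D-algebra, which holds precisely because the post-Lie axioms for $(\mathcal{V},\dgraftt,[\cdot,\cdot]_0)$ are satisfied. Second, the bialgebra compatibility: $\Delta_{\shuffle}$ is by construction the cocommutative coproduct of $U(\mathcal{V})$, and the theorem guarantees that it remains an algebra morphism for the new product $\ast_+$, so $(\mathfrak{T}^+, \ast_+, \Delta_{\shuffle})$ is a bialgebra. Third, the antipode: the cleanest route is to note that Guin--Oudom exhibits an isomorphism of associative algebras $(\mathfrak{T}^+, \ast_+) \cong U(\mathcal{V}_{GL})$, where $\mathcal{V}_{GL}$ is $\mathcal{V}$ equipped with the bracket $\{x,y\} = [x,y]_0 + x\dgraftt y - y\dgraftt x$ (a Lie bracket precisely by the post-Lie axioms, and closed on $\mathcal{V}$), compatibly with $\Delta_{\shuffle}$; hence $(\mathfrak{T}^+, \ast_+, \Delta_{\shuffle})$ is isomorphic to the universal enveloping Hopf algebra $U(\mathcal{V}_{GL})$ and therefore carries an antipode. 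Alternatively, one checks directly that the regularity grading makes $\mathfrak{T}^+$ connected graded, with the unit $1$ spanning the degree-zero part and every other generator having strictly positive regularity, which supplies the antipode by the standard recursion.

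The only points that genuinely require attention in the planar/deformed setting, and which I would settle before appealing to the general theorem, are that $\ast_+$ is well-defined on $\mathfrak{T}^+$ and that the grading behaves correctly. For well-definedness I would confirm, via the explicit formula \eqref{eq::DeformedGrossmanLarson}, that $\ast_+$ maps $\mathfrak{T}^+ \otimes \mathfrak{T}^+$ into $\mathfrak{T}^+$: grafting only ever attaches $I$-decorated (non-noise) edges and the operators $\uparrow,\downarrow$ merely adjust vertex decorations, so no noise edge is created at the root and positive regularity is retained. For the grading I would verify that the regularity $|\cdot|$ is additive under $\dgraftt$, that is $|x \dgraftt y| = |x| + |y|$ — the key cancellation being that each deformation parameter $\ell$ raises the new edge's contribution by $|\ell|$ while $\uparrow_v^{-\ell}$ lowers the corresponding vertex contribution by $|\ell|$ — and hence additive under $\ast_+$, while $\Delta_{\shuffle}$ preserves total regularity; this is exactly what secures connectedness. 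I expect the main obstacle here to be expository rather than mathematical: making explicit that the abstract Guin--Oudom machinery transports verbatim from the pre-Lie (non-planar) case of \cite{BrunedKatsetsiadis2022} to the present post-Lie (planar) case, since the post-Lie verification has already absorbed all the nontrivial work.
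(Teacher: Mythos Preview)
Your proposal is correct and takes essentially the same approach as the paper: both invoke the standard post-Lie Guin--Oudom theorem from \cite{EbrahimiFardLundervoldMuntheKaas2014}, which is the entire content of the paper's one-line proof. Your additional verification steps (well-definedness on $\mathfrak{T}^+$, grading additivity, explicit antipode argument) are reasonable elaborations but not strictly needed here, since the preceding lemma has already identified $\mathfrak{T}^+$ with $U(\mathcal{V})$, making closure of $\ast_+$ automatic rather than something to check separately.
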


\begin{proof}
This is a standard result for the enveloping algebra over a post-Lie algebra, see for example \cite{EbrahimiFardLundervoldMuntheKaas2014}.
\end{proof}

Extend the product $\ast_+$ to be an action $\ast_+ : \mathfrak{T}^+ \otimes \mathfrak{T} \to \mathfrak{T}$ in the natural way, meaning that we take $\dgraftt$ to still mean deformed grafting on all non-root non-noise vertices and define $\ast^+$ by equation \eqref{eq::DeformedGrossmanLarson}. Denote the dual coaction by $\Delta+: \mathfrak{T} \to \mathfrak{T}^+ \otimes \mathfrak{T}$. Let $H^+=(\mathfrak{T}^+,\shuffle,\Delta^+)$ be the dual Hopf algebra and let $G^+$ be its group of characters. We define the recentering group for our regularity structure by $\mathcal{G}=\{\Gamma_g=(g \otimes \text{id})\Delta^+ : g \text{ character of }H^+  \}$.

\begin{lemma}
The recentering maps $\Gamma_g : \mathfrak{T} \to \mathfrak{T}$ satisfy
\begin{align*}
|\Gamma_g \omega - \omega | < |\omega|
\end{align*}
for all $\omega \in \mathfrak{T}$ and all $g \in H^+$.
\end{lemma}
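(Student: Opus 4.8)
The plan is to read off the conclusion from two structural features of the coaction $\Delta^+\colon\mathfrak{T}\to\mathfrak{T}^+\otimes\mathfrak{T}$: its counit property and its compatibility with the regularity grading. Since $\Gamma_g$ is linear it suffices to treat a homogeneous $\omega$ of regularity $|\omega|$. Writing $\Delta^+(\omega)=\sum\omega^{(1)}\otimes\omega^{(2)}$ with $\omega^{(1)}\in\mathfrak{T}^+$ and $\omega^{(2)}\in\mathfrak{T}$, we have $\Gamma_g\omega=\sum g(\omega^{(1)})\,\omega^{(2)}$, so I must show that every term other than the one reproducing $\omega$ has a right leg of strictly smaller regularity.

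First I would isolate the leading term using the counit. By construction $H^+=(\mathfrak{T}^+,\shuffle,\Delta^+)$ is the graded dual of the Hopf algebra $(\mathfrak{T}^+,\ast_+,\Delta_{\shuffle})$, and $\Delta^+$ is the coaction dual to the action $\ast_+\colon\mathfrak{T}^+\otimes\mathfrak{T}\to\mathfrak{T}$; dualising the identity $1\ast_+\omega=\omega$ gives the comodule counit axiom $(\epsilon\otimes\mathrm{id})\Delta^+=\mathrm{id}$, where $\epsilon$ is the counit of $H^+$. Because $\mathfrak{T}^+$ is spanned by trees of strictly positive regularity, the graded Hopf algebra $H^+$ is connected: its degree-zero part is exactly $\mathbb{R}\cdot 1$ and $\epsilon$ vanishes on all elements of positive degree. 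Applying $(\epsilon\otimes\mathrm{id})\Delta^+=\mathrm{id}$ therefore forces the part of $\Delta^+(\omega)$ whose left leg has degree zero to be precisely $1\otimes\omega$. Since $g$ is a character we have $g(1)=1$, whence
\begin{align*}
\Gamma_g\omega-\omega=\sum_{|\omega^{(1)}|>0} g(\omega^{(1)})\,\omega^{(2)},
\end{align*}
a sum running only over terms with $|\omega^{(1)}|>0$.

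It remains to control the regularity of the surviving right legs, and this is where the real work lies. The key fact is that $\ast_+$ is graded, $|a\ast_+ b|=|a|+|b|$ for homogeneous $a\in\mathfrak{T}^+$, $b\in\mathfrak{T}$, so that dually $\Delta^+$ is cograded, $|\omega^{(1)}|+|\omega^{(2)}|=|\omega|$ for each term above. Granting this, $|\omega^{(1)}|>0$ gives $|\omega^{(2)}|=|\omega|-|\omega^{(1)}|<|\omega|$ for every term of $\Gamma_g\omega-\omega$, so $\Gamma_g\omega-\omega\in\bigoplus_{\beta<|\omega|}\mathfrak{T}_\beta$, which is the assertion. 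The main obstacle is thus verifying the gradedness of $\ast_+$ directly from formula \eqref{eq::DeformedGrossmanLarson}. I would check that each elementary operation appearing there is degree-neutral up to the bookkeeping it is designed to perform: in the deformed grafting $\dgraftt$ the factor $\binom{n_v}{\ell}$ accompanies a term in which the vertex decoration drops by $\ell$ (losing $|\ell|$) while the grafting edge is decorated by $a-\ell$ rather than $a$ (and since the $\mathbb{N}^d$-part of an edge decoration subtracts, this regains exactly $|\ell|$), so the two changes cancel; likewise the reallocations $\uparrow^{m}$, $\downarrow^{n}$ and the transfer of $X^{n_1}$ to the root in \eqref{eq::DeformedGrossmanLarson} each preserve the total regularity. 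This is the planar counterpart of the degree computation carried out in the non-planar setting of \cite{BrunedKatsetsiadis2022,BrunedManchon2022}, and the same cancellation argument applies once one uses the identification \eqref{eq::ForestAsTree} to reduce Lie polynomials to planted trees.
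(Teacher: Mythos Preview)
Your proof is correct and follows essentially the same route as the paper: both arguments establish that $\ast_+$ is graded ($|x\ast_+ y|=|x|+|y|$), deduce that $\Delta^+(\omega)\in\bigoplus_{m+n=|\omega|,\,m\ge0}\mathfrak{T}^+_m\otimes\mathfrak{T}_n$, identify the $m=0$ component as $1\otimes\omega$, and conclude. Your write-up is simply more explicit in two places---you invoke the counit axiom to pin down the degree-zero component, and you spell out the cancellations in the deformation terms that make $\ast_+$ graded---whereas the paper states both facts in a single sentence.
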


\begin{proof}
Since edge decorations contribute negatively to the regularity, and vertex decorations contribute positively, we have $|x \ast_+ y|=|x|+|y|$. Hence, if $\omega$ is homogeneous with regularity $\beta$, we have $\Delta^+(\omega) \in \oplus_{m+n=\beta \atop m \geq 0} \mathfrak{T}^+_m \otimes \mathfrak{T}_n$. The component in $\mathfrak{T}^+_0 \otimes \mathfrak{T}_n$ is exactly $1 \otimes \omega$. Every other component in $\Delta^+(\omega)$ must be of regularity less than $\beta$ in the right tensor. So $\Gamma_g \omega = (g \otimes id)\Delta^+(\omega) = \omega$ + lower regularity terms.
\end{proof}

We would now like to say that $(A,\mathfrak{T},\mathcal{G})$ is a regularity structure, where $A$ is the set of regularities. However, this is not completely accurate as the set $A$ is not bounded from below. This issue was dealt with in \cite{BrunedHairerZambotti2019} in the following way. For each specific equation, one only needs to consider a specific subset of trees. Hence they formulate the concept of trees generated by \textit{rules}, where one is expected to find the rule from the specific equation. If the rule has a property called subcritical, then it only generates finitely many negative trees and one can build a regularity structure by restricting the (co)products to only trees generated by this rule. We will not go into details on rules for planar trees, and instead just define:

\begin{definition} \label{def::PlanarRegularityStructure}
A regularity structure built from restricting $(A,\mathfrak{T},\mathcal{G})$ to a subspace is called a planar regularity structure.
\end{definition}

\begin{proposition}
Restricting the planar regularity structure from Definition \ref{def::PlanarRegularityStructure} to trees with no vertex decorations and edge decorations $\mathcal{D}=\Xi$ recovers the rough path regularity structure from section \ref{sec::PlanarRoughPathsAreRegularityStructures}.
\end{proposition}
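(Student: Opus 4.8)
The plan is to show that switching off the $\mathbb{N}^d$-decorations collapses every deformed operation of Section~\ref{ssect:PositiveHopf} to its undeformed planar counterpart, so that the restricted structure is carried onto that of Section~\ref{sec::PlanarRoughPathsAreRegularityStructures} by simply forgetting decorations. Let $V\subset\mathfrak{T}$ be the subspace spanned by trees with every vertex decoration equal to $0$ and every $\mathbb{N}^d$-part of an edge decoration equal to $0$; here the only non-trivial edge type is a noise $X^i\in\Xi$, while the remaining undecorated (structural) edges play the role of time-integration, exactly as the $0$-decorated edges do in $\mathcal{PB}$. First I would fix the bijection $V\cong\mathcal{PB}$ sending a tree to the planar tree obtained by deleting the trivial decorations, an undecorated edge to an undecorated edge and a noise edge of type $X^i$ to an edge decorated by $i$. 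Since a noise vertex is a leaf, noise edges are leaf-adjacent, matching the defining constraint of $\mathcal{PB}$, and the planar order is preserved; under this identification the maps $B_+$ and $\varphi$ of the two sections coincide, so it suffices to compare the (co)products.

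The key step is the trivialisation of the deformation. In the definition of $\dgraft$ the deformation is carried entirely by the factor $\binom{n_v}{\ell}$ and the shift $\uparrow_v^{-\ell}$; on $V$ every vertex decoration $n_v$ is $0$, so $\binom{0}{\ell}=\delta_{\ell,0}$ forces $\ell=0$, and moreover an edge label $a=0$ would have to become $a-\ell=-\ell$, which is admissible only for $\ell=0$. Hence on $V$ we have $\dgraft=\graft$ and $\dgraftt=\graft$, the generators $X^i$ are absent, and the operators $\uparrow^m,\downarrow^m$ act as the identity. Consequently formula \eqref{eq::DeformedGrossmanLarson} reduces to $\omega_1\ast_+\omega_2=(\omega_1)_{(1)}\cdot((\omega_1)_{(2)}\graft\omega_2)$, the planar Grossman--Larson product of \eqref{eq::GrossmanLarson}. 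Since the planar Grossman--Larson product is dual to $\Delta_{MKW}$, dualising shows that $\Delta^+$ restricted to $V$ becomes the MKW coproduct, which is exactly the definition of $\Delta^+$ in Section~\ref{sec::PlanarRoughPathsAreRegularityStructures} once transported through the bijection $V\cong\mathcal{PB}$.

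With the (co)products matched, the remaining data follow. The regularity $|\tau|$ of Section~\ref{ssect:PositiveHopf} specialises on $V$ to $0$ from every vertex, $\mathrm{reg}(K)=1$ from every undecorated structural edge, and $\alpha_i-1$ from every noise edge, which is precisely the degree assigned in Section~\ref{sec::PlanarRoughPathsAreRegularityStructures}; thus the index set $A$ coincides. The recentering group of the restricted structure is generated by $\Gamma_g=(g\otimes\mathrm{id})\Delta^+$ for $g$ a character of the positive part of $V$, and since $\Delta^+|_V=\Delta_{MKW}$ these are exactly the maps of Section~\ref{sec::PlanarRoughPathsAreRegularityStructures}. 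Therefore the triple obtained by restricting $(A,\mathfrak{T},\mathcal{G})$ to $V$ is the rough path regularity structure $(A,\mathcal{PB},\mathcal{G})$.

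The main obstacle is to verify that the restriction is a bona fide sub-regularity-structure, i.e.\ that $V$ is preserved by the relevant recentering maps. This reduces to checking that the undecorated positive trees $V^+=V\cap\mathfrak{T}^+$ form a sub-bialgebra of the deformed Grossman--Larson Hopf algebra $(\mathfrak{T}^+,\ast_+,\Delta_{\shuffle})$, equivalently that the decorated positive trees span a Hopf ideal $J$ of $H^+$. Both follow from the trivialisation above: since $\ast_+$ preserves $V^+$, no product of two undecorated positive trees has a decorated component, so $\Delta^+$ sends $J$ into $J\otimes H^+ + H^+\otimes J$. A character $g$ of the quotient $H^+/J\cong(V^+,\shuffle,\Delta_{MKW})$ then acts by $\Gamma_g=(g\otimes\mathrm{id})\Delta^+$ picking out only undecorated left factors, for which the vanishing edge labels again force every deformed grafting to be undeformed, leaving an undecorated trunk; hence $\Gamma_g$ preserves $V$ and agrees with the recentering map of Section~\ref{sec::PlanarRoughPathsAreRegularityStructures}. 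Beyond this closure statement, the only remaining work is the bookkeeping matching $B_+$, $\varphi$ and the undecorated/integration edges across the two sections, which is routine.
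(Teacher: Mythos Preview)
Your proposal is correct and follows essentially the same approach as the paper: the key observation in both is that when all vertex decorations vanish, the binomial coefficients $\binom{n_v}{\ell}=\binom{0}{\ell}$ force $\ell=0$, so the deformed grafting collapses to ordinary planar left grafting and the Guin--Oudom construction yields the undeformed planar Grossman--Larson product underlying Section~\ref{sec::PlanarRoughPathsAreRegularityStructures}. The paper's proof is a two-line version of this; you have simply spelled out the matching of the remaining data (index set, regularities, recentering group) and the closure of $V$ under $\Gamma_g$, all of which the paper leaves implicit.
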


\begin{proof}
This follows from the fact that there is no deformation if there are no vertex decorations, and that the rough path regularity structure is obtained by applying the Guin-Oudom construction to non-deformed grafting.
\end{proof}

We are now interested in giving a combinatorial description of the action $\ast_+$ and the coaction $\Delta^+$. This is achieved by interpreting equation \eqref{eq::DeformedGrossmanLarson}. In the action $(X^m \omega_1) \ast_+ (X^n\omega_2)$, some branches of $\omega_1$ are grafted onto non-root vertices of $\omega_2$, where the grafting is done in the deformed sense and where branches grafted on the same vertex respect their planar order. The remaining branches are merged into the root of $\omega_2$ in the sense of the associative product. We can interpret this root-merging also as deformed grafting, but onto the root. The term ${n \choose n_2}X^{n_1} (\downarrow^{n_2}(\omega_1)_{(1)})$ is a generic deformed term. Letting $n_1=n-\ell$ and $n_2=\ell$, we could rewrite it as ${n \choose \ell}X^{n-\ell}(\downarrow^{\ell}(\omega_1)_{(1)})$. We note that this is exactly a deformed grafting, as we are simultaneously subtracting $\ell$ from the root decoration and from the decorations of the grafted edges, together with the appropriate combinatorial coefficient. Hence we can describe this as deformed-grafting $\omega_1$ onto $\omega_2$ where we include the root as a valid vertex to graft on. Finally, the root decoration $X^m$ of $X^m \omega_1$ gets split over all vertices in $\omega_2$ in the sense of $\uparrow^m_{N_{\omega_2}}$. We illustrate this with an example:

\begin{example}
\begin{align*}
{\color{blue} \Forest{[\delta[\beta,edge label = {node[midway,fill=white,scale=0.5]{$b$}}][\gamma,edge label = {node[midway,fill=white,scale=0.5]{$c$}}]]} } \ast_+ {\color{blue} \Forest{[\omega[\alpha,edge label = {node[midway,fill=white,scale=0.5]{$a$}}]]} }=&\sum_{\delta=\delta_1+\delta_2,\ell_1,\ell_2} {\delta \choose \delta_1,\delta_2}( {\omega \choose \ell_1,\ell_2} {\color{blue} \Forest{[\omega+\delta_1-\ell_1-\ell_2[\beta,edge label = {node[midway,fill=white,scale=0.5]{$b-\ell_1$}}][\gamma,edge label = {node[midway,fill=white,scale=0.5]{$c-\ell_2$}}][\alpha+\delta_2,edge label = {node[midway,fill=white,scale=0.5]{$a$}}]]} } + {\omega \choose \ell_1}{\alpha \choose \ell_2} {\color{blue}\Forest{[\omega+\delta_1+\ell_1[\beta,edge label = {node[midway,fill=white,scale=0.5]{$b-\ell_1$}}][\alpha+\delta_2-\ell_2,edge label = {node[midway,fill=white,scale=0.5]{$a$}}[\gamma,edge label = {node[midway,fill=white,scale=0.5]{$c-\ell_2$}}]]]}}\\
+&{\omega \choose \ell_2}{\alpha \choose \ell_1}{\color{blue} \Forest{[\omega+\delta_1-\ell_2[\gamma,edge label = {node[midway,fill=white,scale=0.5]{$c-\ell_2$}}][\alpha+\delta_2-\ell_1,edge label = {node[midway,fill=white,scale=0.5]{$a$}}[\beta,edge label = {node[midway,fill=white,scale=0.5]{$b-\ell_1$}}]]]} } + {\alpha \choose \ell_1,\ell_2}{\color{blue} \Forest{[\omega+\delta_1[\alpha+\delta_2-\ell_1-\ell_2,edge label = {node[midway,fill=white,scale=0.5]{$a$}}[\beta,edge label = {node[midway,fill=white,scale=0.5]{$b-\ell_1$}}][\gamma,edge label = {node[midway,fill=white,scale=0.5]{$c-\ell_2$}}]]]} } ).
\end{align*}
\end{example}

We now want to describe the coaction $\Delta^+$. Let $X^n\omega$ be a generic element in $\mathfrak{T}$. We say that a subset $c$ of edges in $X^n\omega$ is a left admissible cut if:
\begin{enumerate}
\item The set $c$ does not contain any noise edges.
\item Any path from the root of $\omega$ to a leaf of $\omega$ contains at most one edge in $c$.
\item If $e\in c$ is an edge in the cut, then every other edge that is outgoing from the same vertex as $e$ and is to the left of $e$ in the planar embedding, is also in $c$.
\end{enumerate}
For $c$ an admissible cut of $X^n\omega$, we define the pruned part $P^c(X^n \omega)$ and the trunk $T^c(X^n \omega)$. The pruned part $P^c(X^n \omega)$ is the tree obtained by cutting all of the edges in $c$ and grafting these edges on a new root with no decoration. Edges cut off from different vertices are shuffled. The trunk $T^c(X^n \omega)$ is the tree obtained by removing the edges in $c$ from $\omega$ and keeping the part that is connected to the root. For $P^c(X^n \omega), T^c(X^n \omega)$ being the trunk and the pruned part for the same cut $c$, we define the deformed tensor product $P^c(X^n \omega) \hat{\otimes} T^c(X^n \omega)$ as the sum over all tensors $P^c(X^n \omega) \otimes T^n(X^n \omega)$ where we simultaneously increase the decorations of vertices in the right tensor and by the same amount increase the decoration of the edge in the left tensor that was cut off from the vertex. See Example \ref{example::DeltaPlus} below, where in the rightmost term on the top line we add $\ell_1$ to a vertex decoration and to the edge that was cut off from this vertex. Multiplied by the appropriate combinatorial factor, being the reciprocal of what the grafting coefficient would have been. We furthermore sum over all ways to decrease decorations of vertices in the right tensor while increasing the decoration of the root in the left tensor by the same amount, see the root decoration $n$ in Example \ref{example::DeltaPlus}.

\begin{proposition}
The coproduct $\Delta^+$ can be described by left admissible cuts by:
\begin{align*}
	\Delta^+(X^n \omega)=\sum_c P^c(X^n \omega) \hat{\otimes} T^c(X^n \omega).
\end{align*}
\end{proposition}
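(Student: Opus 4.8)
The plan is to prove the formula by duality. By construction $\Delta^+$ is the transpose of the action $\ast_+\colon \mathfrak{T}^+\otimes \mathfrak{T}\to\mathfrak{T}$ with respect to the pairing that makes the basis of planar trees self-dual; since planar trees carry no nontrivial symmetries, this pairing is simply $\langle\tau,\sigma\rangle=\delta_{\tau\sigma}$, and the defining relation reads
\begin{align*}
\langle \Delta^+(X^n\omega),\,a\otimes b\rangle=\langle X^n\omega,\,a\ast_+ b\rangle
\end{align*}
for $a\in\mathfrak{T}^+$ and $b\in\mathfrak{T}$. Thus it suffices to show that, for every pair $(a,b)$ of basis trees, the coefficient of $a\otimes b$ in the claimed sum $\sum_c P^c(X^n\omega)\,\hat{\otimes}\,T^c(X^n\omega)$ equals the coefficient of $X^n\omega$ in $a\ast_+ b$.

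First I would extract the combinatorial content of the right-hand side from equation \eqref{eq::DeformedGrossmanLarson} and the discussion preceding the proposition. Writing $a=X^p\omega_1$ and $b=X^q\omega_2$, the product $a\ast_+ b$ is obtained by deshuffling the branches of $\omega_1$ via $\Delta_{\shuffle}$, deformed-left-grafting the selected branches $(\omega_1)_{(2)}$ onto the vertices of $\omega_2$ (the root included, which realises the associative merge), while the complementary branches together with the index data form the new root, and finally distributing $X^p$ over all vertices of $\omega_2$ by $\uparrow^{p}_{N_{\omega_2}}$. Consequently $X^n\omega$ occurs in $a\ast_+ b$ precisely when $\omega$ arises as such a grafting of a deformation of $\omega_1$ onto a deformation of $\omega_2$.

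The heart of the argument is then the bijection between grafting configurations in $a\ast_+ b$ yielding $X^n\omega$ and left admissible cuts $c$ of $X^n\omega$ whose pruned and trunk parts agree with $a$ and $b$ after the decoration adjustments encoded in $\hat{\otimes}$. Removing a set of edges that is leftmost-closed at each vertex (condition $3$ of the cut) and crosses each root-to-leaf path at most once (condition $2$) is exactly the reverse of leftmost deformed grafting: the branches above the cut, shuffled across the distinct vertices they were attached to and grafted onto a fresh root, reassemble into the pruned forest, and the trunk is what survives. Condition $3$ on the cut is the transpose of the leftmost convention built into $\dgraftt$, and the prohibition on cutting noise edges (condition $1$) dualises the prohibition on grafting at noise vertices.

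The step I expect to be the main obstacle is the matching of the deformation weights. In $\ast_+$ each deformed grafting onto a vertex of decoration $n_v$ contributes a factor $\binom{n_v}{\ell}$, and the index redistribution contributes the factors $\binom{n}{n_2}$ appearing in \eqref{eq::DeformedGrossmanLarson}; the deformed tensor product $\hat{\otimes}$ is defined with exactly the reciprocal weights, together with sums over raising the matched left-edge decoration and over lowering right-vertex decorations into the left root. I would verify the cancellation first in the single-grafted-edge case, where the pairing of $\binom{n_v}{\ell}$ against its reciprocal together with the sum over the cut-off vertex is transparent, and then assemble the general case by iterating the multiplicativity relation \eqref{eq::DAlg3} for $\dgraftt$ over $\Delta_{\shuffle}$, exactly as in the non-planar computation of \cite{BrunedKatsetsiadis2022}. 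The only genuinely new feature is the planar ordering of branches, which enters purely combinatorially and carries weight one, so the weighted bijection is just the planar refinement of the non-planar one.
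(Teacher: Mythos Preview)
Your proposal is correct and follows essentially the same approach as the paper: both argue by duality, using that $\Delta^+$ is by definition the transpose of $\ast_+$, so that the pruned/trunk skeletons are forced to be left admissible cuts and the decoration weights in $\hat{\otimes}$ are the reciprocals of the grafting coefficients in \eqref{eq::DeformedGrossmanLarson}. Your write-up is simply more explicit about the bijection and the weight cancellation than the paper's short proof, which merely states the duality and leaves the decoration bookkeeping to the reader.
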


\begin{proof}
The coaction $\Delta^+(X^n\omega)$, being dual to $\ast_+$, is a weighted sum (over some choice of basis) of elements $x,y$ such that $x \ast_+ y$ contains $X^n\omega$ as a summand. This means that the skeletons of $x,y$ have to be given by left admissible cuts. Then we sum over all ways to decorate these skeletons such that the resulting product has a term with the correct decoration, and multiply by the appropriate weights.
\end{proof}

\begin{example}\label{example::DeltaPlus} In the following computation, note that the sums are finite as we require the left tensor to have positive regularity.
\begin{align*}
\Delta^+({\color{blue} \Forest{[\delta[\beta,edge label = {node[midway,fill=white,scale=0.5]{$b$}}][\gamma,edge label = {node[midway,fill=white,scale=0.5]{$c$}}[\alpha,edge label = {node[midway,fill=white,scale=0.5]{$a$}}]]]} })
=& 1 \otimes {\color{blue} \Forest{[\delta[\beta,edge label = {node[midway,fill=white,scale=0.5]{$b$}}][\gamma,edge label = {node[midway,fill=white,scale=0.5]{$c$}}[\alpha,edge label = {node[midway,fill=white,scale=0.5]{$a$}}]]]} } 
+ {\color{blue} \Forest{[\delta[\beta,edge label = {node[midway,fill=white,scale=0.5]{$b$}}][\gamma,edge label = {node[midway,fill=white,scale=0.5]{$c$}}[\alpha,edge label = {node[midway,fill=white,scale=0.5]{$a$}}]]]} } \otimes 1
+ \sum_{n=n_1+n_2+n_3,\ell_1} \frac{1}{{n \choose n_1,n_2,n_3}{\delta+\ell_1-n_1 \choose \ell_1}}{\color{blue} \Forest{[n[\beta,edge label = {node[midway,fill=white,scale=0.5]{$b+\ell_1$}}]]} } \otimes {\color{blue} \Forest{[\delta+\ell_1-n_1[\gamma-n_2,edge label = {node[midway,fill=white,scale=0.5]{$c$}}[\alpha-n_3,edge label = {node[midway,fill=white,scale=0.5]{$a$}}]]]} }\\
+& \sum_{n=n_1+n_2+n_3,\ell_1} \frac{1}{{n \choose n_1,n_2,n_3}{\gamma+\ell_1 - n_3 \choose \ell_1}} {\color{blue} \Forest{[n[\alpha,edge label = {node[midway,fill=white,scale=0.5]{$a+\ell_1$}}]]} } \otimes {\color{blue} \Forest{[\delta-n_1[\beta-n_2,edge label = {node[midway,fill=white,scale=0.5]{$b$}}][\gamma+\ell_1-n_3,edge label = {node[midway,fill=white,scale=0.5]{$c$}}]]} }\\
+& \sum_{n=n_1+n_2,\ell_1,\ell_2} \frac{1}{{n \choose n_1,n_2} {\delta+\ell_1 - n_1 \choose \ell_1}{\gamma+\ell_2 - n_2 \choose \ell_2}} ({\color{blue} \Forest{[n[\beta,edge label = {node[midway,fill=white,scale=0.5]{$b+\ell_1$}}][\alpha,edge label = {node[midway,fill=white,scale=0.5]{$a+\ell_2$}}]]} }+ {\color{blue}\Forest{[n[\alpha,edge label = {node[midway,fill=white,scale=0.5]{$a+\ell_2$}}][\beta,edge label = {node[midway,fill=white,scale=0.5]{$b+\ell_1$}}]]}}) \otimes {\color{blue} \Forest{[\delta+\ell_1 - n_1[\gamma + \ell_2 - n_2,edge label = {node[midway,fill=white,scale=0.5]{$c$}}]]} }\\
+& \sum_{n,\ell_1,\ell_2} \frac{1}{{\delta+\ell_1+\ell_2-n \choose \ell_1,\ell_2}} {\color{blue} \Forest{[n[\beta,edge label = {node[midway,fill=white,scale=0.5]{$b+\ell_1$}}][\gamma,edge label = {node[midway,fill=white,scale=0.5]{$c+\ell_2$}}[\alpha,edge label = {node[midway,fill=white,scale=0.5]{$a$}}]]]} } \otimes {\color{blue} \Forest{[\delta + \ell_1 + \ell_2 -n]} }
+ \sum_{n=n_1+n_2+n_3+n_4} \frac{1}{{n \choose n_1,n_2,n_3,n_4}} {\color{blue} \Forest{[n]} } \otimes {\color{blue} \Forest{[\delta-n_1[\beta-n_3,edge label = {node[midway,fill=white,scale=0.5]{$b$}}][\gamma-n_4,edge label = {node[midway,fill=white,scale=0.5]{$c$}}[\alpha-n_2,edge label = {node[midway,fill=white,scale=0.5]{$a$}}]]]} }  .
\end{align*}
\end{example}

\subsection{The negative Hopf algebra}

We follow the constructions from \cite{BrunedManchon2022}, but adapted for planar trees.\\

 Let $\mathfrak{T}^-$ denote the subspace of $\mathfrak{T}$ generated by planar trees of negative regularity. We define an action of $\mathfrak{T}^-$ onto $\mathfrak{T}$ by
 \begin{align*}
 \tau_1 \diamond \tau_2 = \sum_{v \in \widetilde{N}_{\tau_2}} \tau_1 \diamond_v \tau_2,
 \end{align*}
 where $\tau_1 \in \mathcal{T}^-, \tau_2 \in \mathcal{T},$ $\widetilde{N}_{\tau_2}$ is the set of vertices in $\tau_2$ that are not adjacent to a noise edge and $\diamond_v$ means insertion of $\tau_1$ into the vertex $v$. By inserting $\tau_1$ into $v$, we mean that we identify the root of $\tau_1$ with the vertex $v$. The outgoing edges from $v$ are interpreted as getting left-grafted onto $\tau_1$, i.e. we sum over all ways to left-graft these edges onto vertices of $\tau_1$ such that the planar order is preserved when two edges are grafted onto the same vertex. We sum over all ways to increase the decoration of vertices in $\tau_1$ such that the sum of increases equals the decoration of $v$.
 \begin{example}
 \begin{align*}
 &{\color{blue} \Forest{ [k_1[k_2,edge label = {node[midway,fill=white,scale=0.5]{$d_1$}}][k_3,edge label = {node[midway,fill=white,scale=0.5]{$d_2$}}]] } }
 \diamond_{k_5} {\color{blue} \Forest{[k_4[k_5,edge label = {node[midway,fill=white,scale=0.5]{$d_3$}}[k_7,edge label = {node[midway,fill=white,scale=0.5]{$d_4$}}[k_8,edge label = {node[midway,fill=white,scale=0.5]{$d_5$}}]][k_9,edge label = {node[midway,fill=white,scale=0.5]{$d_6$}}]][k_6,edge label = {node[midway,fill=white,scale=0.5]{$d_7$}}]] } }
 =\sum_{k_5=\delta_1+\delta_2+\delta_3}{k_5 \choose \delta_1,\delta_2,\delta_3}({\color{blue} \Forest{[k_4[k_1+\delta_1,edge label = {node[midway,fill=white,scale=0.5]{$d_3$}}[k_7,edge label = {node[midway,fill=white,scale=0.5]{$d_4$}}[k_8,edge label = {node[midway,fill=white,scale=0.5]{$d_5$}}]][k_9,edge label = {node[midway,fill=white,scale=0.5]{$d_6$}}][k_2+\delta_2,edge label = {node[midway,fill=white,scale=0.5]{$d_1$}}][k_3+\delta_3,edge label = {node[midway,fill=white,scale=0.5]{$d_2$}}]][k_6,edge label = {node[midway,fill=white,scale=0.5]{$d_7$}}]] } }
 +{\color{blue} \Forest{[k_4[k_1+\delta_1,edge label = {node[midway,fill=white,scale=0.5]{$d_3$}}[k_7,edge label = {node[midway,fill=white,scale=0.5]{$d_4$}}[k_8,edge label = {node[midway,fill=white,scale=0.5]{$d_5$}}]][k_2+\delta_2,edge label = {node[midway,fill=white,scale=0.5]{$d_1$}}[k_9,edge label = {node[midway,fill=white,scale=0.5]{$d_6$}}]][k_3+\delta_3,edge label = {node[midway,fill=white,scale=0.5]{$d_2$}}]][k_6,edge label = {node[midway,fill=white,scale=0.5]{$d_7$}}]] } }\\
 +&{\color{blue} \Forest{[k_4[k_1+\delta_1,edge label = {node[midway,fill=white,scale=0.5]{$d_3$}}[k_7,edge label = {node[midway,fill=white,scale=0.5]{$d_4$}}[k_8,edge label = {node[midway,fill=white,scale=0.5]{$d_5$}}]][k_2+\delta_2,edge label = {node[midway,fill=white,scale=0.5]{$d_1$}}][k_3+\delta_3,edge label = {node[midway,fill=white,scale=0.5]{$d_2$}}[k_9,edge label = {node[midway,fill=white,scale=0.5]{$d_6$}}]]][k_6,edge label = {node[midway,fill=white,scale=0.5]{$d_7$}}]] } }
 +{\color{blue} \Forest{[k_4[k_1+\delta_1,edge label = {node[midway,fill=white,scale=0.5]{$d_3$}}[k_9,edge label = {node[midway,fill=white,scale=0.5]{$d_6$}}][k_2+\delta_2,edge label = {node[midway,fill=white,scale=0.5]{$d_1$}}[k_7,edge label = {node[midway,fill=white,scale=0.5]{$d_4$}}[k_8,edge label = {node[midway,fill=white,scale=0.5]{$d_5$}}]]][k_3+\delta_3,edge label = {node[midway,fill=white,scale=0.5]{$d_2$}}]][k_6,edge label = {node[midway,fill=white,scale=0.5]{$d_7$}}]] } }
 +{\color{blue} \Forest{[k_4[k_1+\delta_1,edge label = {node[midway,fill=white,scale=0.5]{$d_3$}}[k_2+\delta_2,edge label = {node[midway,fill=white,scale=0.5]{$d_1$}}[k_7,edge label = {node[midway,fill=white,scale=0.5]{$d_4$}}[k_8,edge label = {node[midway,fill=white,scale=0.5]{$d_5$}}]][k_9,edge label = {node[midway,fill=white,scale=0.5]{$d_6$}}]][k_3+\delta_3,edge label = {node[midway,fill=white,scale=0.5]{$d_2$}}]][k_6,edge label = {node[midway,fill=white,scale=0.5]{$d_7$}}]] } }
 +{\color{blue} \Forest{[k_4[k_1+\delta_1,edge label = {node[midway,fill=white,scale=0.5]{$d_3$}}[k_2+\delta_2,edge label = {node[midway,fill=white,scale=0.5]{$d_1$}}[k_7,edge label = {node[midway,fill=white,scale=0.5]{$d_4$}}[k_8,edge label = {node[midway,fill=white,scale=0.5]{$d_5$}}]]][k_3+\delta_3,edge label = {node[midway,fill=white,scale=0.5]{$d_2$}}[k_9,edge label = {node[midway,fill=white,scale=0.5]{$d_6$}}]]][k_6,edge label = {node[midway,fill=white,scale=0.5]{$d_7$}}]] } }\\
 +&{\color{blue} \Forest{[k_4[k_1+\delta_1,edge label = {node[midway,fill=white,scale=0.5]{$d_3$}}[k_9,edge label = {node[midway,fill=white,scale=0.5]{$d_6$}}][k_2+\delta_2,edge label = {node[midway,fill=white,scale=0.5]{$d_1$}}][k_3+\delta_3,edge label = {node[midway,fill=white,scale=0.5]{$d_2$}}[k_7,edge label = {node[midway,fill=white,scale=0.5]{$d_4$}}[k_8,edge label = {node[midway,fill=white,scale=0.5]{$d_5$}}]]]][k_6,edge label = {node[midway,fill=white,scale=0.5]{$d_7$}}]] } }
 +{\color{blue} \Forest{[k_4[k_1+\delta_1,edge label = {node[midway,fill=white,scale=0.5]{$d_3$}}[k_2+\delta_2,edge label = {node[midway,fill=white,scale=0.5]{$d_1$}}[k_9,edge label = {node[midway,fill=white,scale=0.5]{$d_6$}}]][k_3+\delta_3,edge label = {node[midway,fill=white,scale=0.5]{$d_2$}}[k_7,edge label = {node[midway,fill=white,scale=0.5]{$d_4$}}[k_8,edge label = {node[midway,fill=white,scale=0.5]{$d_5$}}]]]][k_6,edge label = {node[midway,fill=white,scale=0.5]{$d_7$}}]] } }
 +{\color{blue} \Forest{[k_4[k_1+\delta_1,edge label = {node[midway,fill=white,scale=0.5]{$d_3$}}[k_2+\delta_2,edge label = {node[midway,fill=white,scale=0.5]{$d_1$}}][k_3+\delta_3,edge label = {node[midway,fill=white,scale=0.5]{$d_2$}}[k_7,edge label = {node[midway,fill=white,scale=0.5]{$d_4$}}[k_8,edge label = {node[midway,fill=white,scale=0.5]{$d_5$}}]][k_9,edge label = {node[midway,fill=white,scale=0.5]{$d_6$}}]]][k_6,edge label = {node[midway,fill=white,scale=0.5]{$d_7$}}]] } }).
 \end{align*}
 \end{example}
 
We can restrict $\diamond$ to be an action of $\mathfrak{T}^-$ onto itself, then we call it the insertion product.

\begin{lemma}\label{lemma::InsertionIsPreLie}
The algebra $(\mathfrak{T}^-,\diamond)$ is pre-Lie.
\end{lemma}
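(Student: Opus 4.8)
The plan is to verify the (left) pre-Lie identity directly, i.e.\ to show that the associator
\[
a_\diamond(\tau_1,\tau_2,\tau_3)=\tau_1\diamond(\tau_2\diamond\tau_3)-(\tau_1\diamond\tau_2)\diamond\tau_3
\]
is symmetric under the exchange $\tau_1\leftrightarrow\tau_2$. Expanding both products as sums over insertion vertices, the governing observation is that after inserting $\tau_2$ into a vertex $v\in\widetilde{N}_{\tau_3}$ the vertex set of $\tau_2\diamond_v\tau_3$ is the disjoint union of $V(\tau_3)\setminus\{v\}$ with $V(\tau_2)$ (the root of $\tau_2$ playing the role of $v$). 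Accordingly $\tau_1\diamond(\tau_2\diamond\tau_3)$ splits into a \emph{nested} family, where $\tau_1$ is inserted into a vertex coming from $\tau_2$, and a \emph{disjoint} family, where $\tau_1$ is inserted into a vertex $w\in\widetilde{N}_{\tau_3}$ with $w\neq v$. I would show that the nested family reproduces exactly $(\tau_1\diamond\tau_2)\diamond\tau_3$ and hence cancels in $a_\diamond$, while the disjoint family is manifestly symmetric in $\tau_1,\tau_2$. This is the classical insertion pre-Lie mechanism used in \cite{BrunedManchon2022}, and the whole work of the proof is to check that the planar branch arrangement and the decoration deformations are compatible with this cancellation and symmetry.

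For the disjoint family the symmetry is immediate: the surviving terms insert $\tau_2$ at $v$ and $\tau_1$ at $w$ for an ordered pair $(v,w)$ of distinct vertices of $\widetilde{N}_{\tau_3}$, the decoration of $v$ being distributed over $V(\tau_2)$ and that of $w$ over $V(\tau_1)$ independently (since $v\neq w$). Swapping $\tau_1\leftrightarrow\tau_2$ only relabels the pair as $(w,v)$, so the total is symmetric. For the nested family I would set up the structure-preserving bijection realizing the cancellation: in both $(\tau_1\diamond_w\tau_2)\diamond_v\tau_3$ and $\tau_1\diamond_w(\tau_2\diamond_v\tau_3)$ one builds the same triply nested planar tree, with $\tau_1$ sitting inside $\tau_2$ (at $w$) sitting inside $\tau_3$ (at $v$). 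The point to verify is that the two orders of left-grafting agree: the outgoing edges of $v$ are regrafted onto $\tau_2$, and those that land at $w$ are subsequently regrafted onto $\tau_1$, which must match the direct left-grafting of the edges of $v$ onto the composite $\tau_1\diamond_w\tau_2$ (whose vertex set already contains $V(\tau_1)$).

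The main obstacle is precisely this compatibility, which has two ingredients. First, on the planar side one must check that the leftmost-edge-first grafting convention, summed over all admissible attachments preserving planar order, yields the same planar tree with the same multiplicity whether the edges of $v$ are grafted onto $\tau_1$ directly or via $w$; this is the planar analogue of the associativity of insertion and is where the left-grafting rule must be handled with care. Second, on the decoration side, the nested coefficients must agree: distributing the decoration of $v$ over $V(\tau_2)$ and then redistributing the $w$-portion over $V(\tau_1)$ must equal distributing the decoration of $v$ over $V(\tau_1)\cup(V(\tau_2)\setminus\{w\})$ in one stage. I would reduce this to the two-stage multinomial identity
\[
\binom{n}{\ell_1}\binom{n-\ell_1}{\ell_2}=\binom{n}{\ell_1+\ell_2}\binom{\ell_1+\ell_2}{\ell_1},
\]
both sides being the trinomial $\binom{n}{\ell_1,\ell_2,n-\ell_1-\ell_2}$, expressing that distributing a vertex decoration in two nested stages agrees with distributing it in one. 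Once the planar and the decoration bookkeeping are matched term-by-term, the nested family cancels in the associator and only the symmetric disjoint family remains, giving the pre-Lie identity and hence that $(\mathfrak{T}^-,\diamond)$ is pre-Lie.
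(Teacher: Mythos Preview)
Your proposal is correct and follows essentially the same approach as the paper: both argue that the associator $a_\diamond(\tau_1,\tau_2,\tau_3)$ reduces to the sum over insertions of $\tau_1$ and $\tau_2$ into distinct vertices of $\tau_3$, which is manifestly symmetric in $\tau_1,\tau_2$. The paper compresses the cancellation of the nested family into ``it is straightforward to see'', whereas you spell out both the planar left-grafting compatibility and the multinomial identity governing the two-stage decoration distribution; this extra bookkeeping is exactly what the paper's ``straightforward'' glosses over, but the argument is the same.
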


\begin{proof}
It is straightforward to see that
\begin{align*}
x \diamond (y \diamond z) - (x \diamond y) \diamond z
\end{align*}
is the sum over all possible ways to insert $x$ and $y$ into different vertices of $z$, since the terms where $x$ are inserted into vertices of $y$ cancels. Note that inserting $x$ and $y$ into the same vertex of $z$ is the same as inserting $x$ into the root of $y$, hence it cancels. For the same reason, we have that
\begin{align*}
y \diamond (x \diamond z) - (y \diamond x) \diamond z
\end{align*}
is also a sum over all possible ways to insert $x$ and $y$ into different vertices of $z$. Hence the two expressions are equal, which means that the algebra is pre-Lie.
\end{proof}

We now deform the action $\diamond$ in the same way as was done for non-planar trees in \cite{BrunedManchon2022}. Let the deformed action be defined by:
\begin{align*}
\tau_1 \dinsert \tau_2 = \sum_{v \in \widetilde{N}_{\tau_2}} \tau_1 \dinsert_v \tau_2,
\end{align*}
where $\dinsert_v$ means insertion of $\tau_1$ into the vertex $v$. By inserting $\tau_1$ into $v$, we mean that we identify the root of $\tau_1$ with the vertex $v$. The outgoing edges from $v_1$ are interpreted as getting \textit{deformed} left-grafted onto $\tau_1$, i.e. we sum over all ways to left-graft these edges onto vertices of $\tau_1$ such that the planar order is preserved when two edges are grafted onto the same vertex and we deform each of the edges in this grafting. We sum over all ways to increase the decoration of vertices in $\tau_1$ such that the sum of increases equals the decoration of $v$.

 \begin{example}
\begin{align*}
&{\color{blue} \Forest{ [k_1[k_2,edge label = {node[midway,fill=white,scale=0.5]{$d_1$}}][k_3,edge label = {node[midway,fill=white,scale=0.5]{$d_2$}}]] } }
\dinsert_{k_5} {\color{blue} \Forest{[k_4[k_5,edge label = {node[midway,fill=white,scale=0.5]{$d_3$}}[k_7,edge label = {node[midway,fill=white,scale=0.5]{$d_4$}}[k_8,edge label = {node[midway,fill=white,scale=0.5]{$d_5$}}]][k_9,edge label = {node[midway,fill=white,scale=0.5]{$d_6$}}]][k_6,edge label = {node[midway,fill=white,scale=0.5]{$d_7$}}]] } }
=\sum_{k_5=\delta_1+\delta_2+\delta_3,\ell_1,\ell_2} {k_5 \choose \delta_1,\delta_2,\delta_3} ( {k_1 \choose \ell_1,\ell_2} {\color{blue} \Forest{[k_4[k_1+\delta_1\atop -\ell_1-\ell_2,edge label = {node[midway,fill=white,scale=0.5]{$d_3$}}[k_7,edge label = {node[midway,fill=white,scale=0.5]{$d_4-\ell_1$}}[k_8,edge label = {node[midway,fill=white,scale=0.5]{$d_5$}}]][k_9,edge label = {node[midway,fill=white,scale=0.5]{$d_6-\ell_2$}}][k_2+\delta_2,edge label = {node[midway,fill=white,scale=0.5]{$d_1$}}][k_3+\delta_3,edge label = {node[midway,fill=white,scale=0.5]{$d_2$}}]][k_6,edge label = {node[midway,fill=white,scale=0.5]{$d_7$}}]] } }\\
+&{k_1 \choose \ell_1}{k_2 \choose \ell_2}{\color{blue} \Forest{[k_4[k_1+\delta_1-\ell_1,edge label = {node[midway,fill=white,scale=0.5]{$d_3$}}[k_7,edge label = {node[midway,fill=white,scale=0.5]{$d_4-\ell_1$}}[k_8,edge label = {node[midway,fill=white,scale=0.5]{$d_5$}}]][k_2+\delta_2-\ell_2,edge label = {node[midway,fill=white,scale=0.5]{$d_1$}}[k_9,edge label = {node[midway,fill=white,scale=0.5]{$d_6-\ell_2$}}]][k_3+\delta_3,edge label = {node[midway,fill=white,scale=0.5]{$d_2$}}]][k_6,edge label = {node[midway,fill=white,scale=0.5]{$d_7$}}]] } }
+{k_1 \choose \ell_1}{k_3 \choose \ell_2} {\color{blue} \Forest{[k_4[k_1+\delta_1-\ell_1,edge label = {node[midway,fill=white,scale=0.5]{$d_3$}}[k_7,edge label = {node[midway,fill=white,scale=0.5]{$d_4-\ell_1$}}[k_8,edge label = {node[midway,fill=white,scale=0.5]{$d_5$}}]][k_2+\delta_2,edge label = {node[midway,fill=white,scale=0.5]{$d_1$}}][k_3+\delta_3-\ell_2,edge label = {node[midway,fill=white,scale=0.5]{$d_2$}}[k_9,edge label = {node[midway,fill=white,scale=0.5]{$d_6-\ell_2$}}]]][k_6,edge label = {node[midway,fill=white,scale=0.5]{$d_7$}}]] } }
+{k_1 \choose \ell_2}{k_2 \choose \ell_1}{\color{blue} \Forest{[k_4[k_1+\delta_1-\ell_2,edge label = {node[midway,fill=white,scale=0.5]{$d_3$}}[k_9,edge label = {node[midway,fill=white,scale=0.5]{$d_6-\ell_2$}}][k_2+\delta_2-\ell_1,edge label = {node[midway,fill=white,scale=0.5]{$d_1$}}[k_7,edge label = {node[midway,fill=white,scale=0.5]{$d_4-\ell_1$}}[k_8,edge label = {node[midway,fill=white,scale=0.5]{$d_5$}}]]][k_3+\delta_3,edge label = {node[midway,fill=white,scale=0.5]{$d_2$}}]][k_6,edge label = {node[midway,fill=white,scale=0.5]{$d_7$}}]] } }\\
+&{k_2 \choose \ell_1,\ell_2} {\color{blue} \Forest{[k_4[k_1+\delta_1,edge label = {node[midway,fill=white,scale=0.5]{$d_3$}}[k_2+\delta_2\atop -\ell_1-\ell_2,edge label = {node[midway,fill=white,scale=0.5]{$d_1$}}[k_7,edge label = {node[midway,fill=white,scale=0.5]{$d_4-\ell_1$}}[k_8,edge label = {node[midway,fill=white,scale=0.5]{$d_5$}}]][k_9,edge label = {node[midway,fill=white,scale=0.5]{$d_6-\ell_2$}}]][k_3+\delta_3,edge label = {node[midway,fill=white,scale=0.5]{$d_2$}}]][k_6,edge label = {node[midway,fill=white,scale=0.5]{$d_7$}}]] } }
+{k_2 \choose \ell_1}{k_3 \choose \ell_2}{\color{blue} \Forest{[k_4[k_1+\delta_1,edge label = {node[midway,fill=white,scale=0.5]{$d_3$}}[k_2+\delta_2-\ell_1,edge label = {node[midway,fill=white,scale=0.5]{$d_1$}}[k_7,edge label = {node[midway,fill=white,scale=0.5]{$d_4-\ell_1$}}[k_8,edge label = {node[midway,fill=white,scale=0.5]{$d_5$}}]]][k_3+\delta_3-\ell_2,edge label = {node[midway,fill=white,scale=0.5]{$d_2$}}[k_9,edge label = {node[midway,fill=white,scale=0.5]{$d_6-\ell_2$}}]]][k_6,edge label = {node[midway,fill=white,scale=0.5]{$d_7$}}]] } }
+{k_1 \choose \ell_2}{k_3 \choose \ell_1}{\color{blue} \Forest{[k_4[k_1+\delta_1-\ell_2,edge label = {node[midway,fill=white,scale=0.5]{$d_3$}}[k_9,edge label = {node[midway,fill=white,scale=0.5]{$d_6-\ell_2$}}][k_2+\delta_2,edge label = {node[midway,fill=white,scale=0.5]{$d_1$}}][k_3+\delta_3-\ell_1,edge label = {node[midway,fill=white,scale=0.5]{$d_2$}}[k_7,edge label = {node[midway,fill=white,scale=0.5]{$d_4-\ell_1$}}[k_8,edge label = {node[midway,fill=white,scale=0.5]{$d_5$}}]]]][k_6,edge label = {node[midway,fill=white,scale=0.5]{$d_7$}}]] } }\\
+&{k_2 \choose \ell_2}{k_3 \choose \ell_1}{\color{blue} \Forest{[k_4[k_1+\delta_1,edge label = {node[midway,fill=white,scale=0.5]{$d_3$}}[k_2+\delta_2-\ell_2,edge label = {node[midway,fill=white,scale=0.5]{$d_1$}}[k_9,edge label = {node[midway,fill=white,scale=0.5]{$d_6-\ell_2$}}]][k_3+\delta_3-\ell_1,edge label = {node[midway,fill=white,scale=0.5]{$d_2$}}[k_7,edge label = {node[midway,fill=white,scale=0.5]{$d_4-\ell_1$}}[k_8,edge label = {node[midway,fill=white,scale=0.5]{$d_5$}}]]]][k_6,edge label = {node[midway,fill=white,scale=0.5]{$d_7$}}]] } }
+{k_3 \choose \ell_1,\ell_2}{\color{blue} \Forest{[k_4[k_1+\delta_1,edge label = {node[midway,fill=white,scale=0.5]{$d_3$}}[k_2+\delta_2,edge label = {node[midway,fill=white,scale=0.5]{$d_1$}}][k_3+\delta_3-\ell_1-\ell_2,edge label = {node[midway,fill=white,scale=0.5]{$d_2$}}[k_7,edge label = {node[midway,fill=white,scale=0.5]{$d_4-\ell_1$}}[k_8,edge label = {node[midway,fill=white,scale=0.5]{$d_5$}}]][k_9,edge label = {node[midway,fill=white,scale=0.5]{$d_6-\ell_2$}}]]][k_6,edge label = {node[midway,fill=white,scale=0.5]{$d_7$}}]] } }).
\end{align*}
\end{example}

\begin{lemma} \label{lemma::InsertionAsGrossmanLarson}
The following identity holds:
\begin{align*}
\tau_1 \dinsert_v \tau_2 = (P_v(\tau_2) \ast_+ \tau_1) \graft_v T_v(\tau_2),
\end{align*}
where $P_v(\tau_2)$ is the subtree of $\tau_2$ that has $v$ as a root, and $T_v(\tau_2)$ is the tree obtained by removing all branches attached to $v$ in $\tau_2$ and setting the decoration of $v$ to zero. The product $\graft_v$ means identifying the root of the left argument with the vertex $v$ in the right argument.
\end{lemma}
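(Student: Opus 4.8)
The plan is to reduce the identity to the case where $v$ is the root of $\tau_2$, and then to match the definition of the deformed insertion $\dinsert_v$ against the deformed Grossman--Larson formula \eqref{eq::DeformedGrossmanLarson} term by term. This is the planar counterpart of the non-planar statement recalled at the end of Section \ref{ssect:trees} from \cite{BrunedManchon2022}, so the skeleton of the argument is the same, with the additional bookkeeping of the planar order of branches.

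First I would observe that both sides of the claimed identity are \emph{local} at $v$: neither $\dinsert_v$ nor the product $\graft_v T_v(\tau_2)$ alters the part of $\tau_2$ lying strictly below $v$. On the right-hand side $T_v(\tau_2)$ is untouched by the computation of $P_v(\tau_2) \ast_+ \tau_1$, and the operator $\uparrow^{n_v}_{N_{\tau_1}}$ appearing in \eqref{eq::DeformedGrossmanLarson} redistributes the decoration of $v$ only over the vertices of $\tau_1$, the decoration of $v$ inside $T_v(\tau_2)$ having been set to zero. Hence it suffices to establish the equality when $T_v(\tau_2)$ is the single zero-decorated root, i.e. when $v$ is the root of $\tau_2$ and $P_v(\tau_2)=\tau_2$; the general case then follows by grafting the common result onto $T_v(\tau_2)$ at $v$ via $\graft_v$, since $\graft_v$ onto a single zero-decorated vertex is the identity.

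In this reduced situation the claim becomes $\tau_1 \dinsert_{\mathrm{root}} \tau_2 = \tau_2 \ast_+ \tau_1$, which I would prove by unwinding \eqref{eq::DeformedGrossmanLarson} with left argument $P_v(\tau_2)=X^{n_v}\omega$, where $\omega$ is the forest of branches at $v$, and right argument $\tau_1 = X^n\omega_2$. The three ingredients of the deformed insertion match the three ingredients of the product as follows: the redistribution of the decoration $n_v$ of $v$ over the vertices of $\tau_1$ is precisely $\uparrow^{n_v}_{N_{\tau_1}}$; the deformed left-grafting of the branches $\omega$ onto the \emph{non-root} vertices of $\tau_1$ is the term $(\omega_1)_{(2)} \dgraftt \omega_2$; and the deformed left-grafting onto the root of $\tau_1$, which is identified with $v$, is encoded by the concatenation term $X^{n_1}(\downarrow^{n_2}(\omega_1)_{(1)})$ together with the coefficient $\binom{n}{n_2}$, exactly the ``root-merging as deformed grafting'' interpretation discussed after the definition of $\ast_+$. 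The deshuffle $(\omega_1)_{(1)}\otimes(\omega_1)_{(2)}$ produced by $\Delta_{\shuffle}$ realises the sum over all ways to distribute the branches of $\tau_2$ at $v$ between the root of $\tau_1$ and its proper vertices, and the planar, non-commutative structure of the concatenation and of $\dgraftt$ guarantees that branches grafted onto a common vertex keep their planar order.

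The main obstacle I expect is bookkeeping rather than conceptual: verifying that the binomial weights agree on both sides and that the planar ordering is consistently preserved. Concretely, one must check that the coefficient $\binom{n}{n_2}$ together with the vertex-decoration multinomials produced by iterating \eqref{eq::DAlg2}--\eqref{eq::DAlg3} reproduce exactly the coefficients $\binom{n_v}{\delta_1,\dots}\binom{\cdot}{\ell_1,\ell_2}$ appearing in the explicit form of $\dinsert_v$, and that the left-to-right order in which branches are concatenated at the root matches the left-grafting convention of $\dinsert_v$. Here the post-Lie identity \eqref{eq::DeformedPostLie1} is what lets one treat a grafted Lie polynomial edge-by-edge, while Lemma \ref{lemma::AlmostDerivation} and the preceding lemmas on $X^m\omega_1 \dgraftt \omega_2$ reduce every term to a single deformed graft whose coefficient can be read off directly; matching these against the insertion coefficients completes the proof.
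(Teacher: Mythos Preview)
Your proposal is correct and follows essentially the same approach as the paper: the paper's proof is a one-liner stating that the definition of $\dinsert_v$ agrees with the combinatorial description of $\ast_+$ given after \eqref{eq::DeformedGrossmanLarson}, and your argument is a careful unpacking of exactly this matching, with the additional (valid) locality reduction to $v$ being the root. The ingredients you identify---redistribution of $n_v$ via $\uparrow^{n_v}_{N_{\tau_1}}$, deformed grafting on non-root vertices via $(\omega_1)_{(2)}\dgraftt\omega_2$, and root-merging via $X^{n_1}(\downarrow^{n_2}(\omega_1)_{(1)})$---are precisely those the paper invokes implicitly by reference to its earlier combinatorial discussion.
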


\begin{proof}
It is clear that the definition of $\dinsert$ agrees with the combinatorial description of $\ast_+$ from Section \ref{ssect:PositiveHopf}.
\end{proof}

\begin{proposition}
The algebra $(\mathfrak{T}^-,\dinsert)$ is pre-Lie.
\end{proposition}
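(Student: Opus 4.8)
The plan is to mirror the proof of Lemma~\ref{lemma::InsertionIsPreLie} for the undeformed product $\diamond$, tracking the deformation coefficients throughout. Concretely, I would show that the associator
\[
a_{\dinsert}(\tau_1,\tau_2,\tau_3)=\tau_1\dinsert(\tau_2\dinsert\tau_3)-(\tau_1\dinsert\tau_2)\dinsert\tau_3
\]
is symmetric under exchanging $\tau_1$ and $\tau_2$, which is equivalent to the left pre-Lie identity. First I would expand $\tau_1\dinsert(\tau_2\dinsert\tau_3)$ as a double sum over insertion sites: after inserting $\tau_2$ at a vertex $v$ of $\tau_3$, the tree $\tau_1$ is inserted either at a vertex of $\tau_3$ distinct from $v$, or at the merged vertex $v$, or at a non-root vertex of the inserted copy of $\tau_2$.

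Next I would isolate the terms in which $\tau_1$ and $\tau_2$ are inserted at two distinct vertices of $\tau_3$. These are manifestly symmetric in $\tau_1,\tau_2$: the two insertions act on disjoint vertices of $\tau_3$ together with their respective outgoing edges, so the decoration redistributions and edge deformations at the two sites are independent, the induced planar orders are fixed by the two sites, and exchanging $\tau_1\leftrightarrow\tau_2$ merely relabels the sum over ordered pairs of distinct sites. As in the undeformed case, the ``insert $\tau_1$ and $\tau_2$ at the same vertex of $\tau_3$'' contributions coincide with the ``insert $\tau_1$ at the root of $\tau_2$'' contributions and cancel between the two halves of $a_{\dinsert}$.

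The heart of the argument, and the step I expect to be the main obstacle, is the cancellation of the remaining nested terms, where $\tau_1$ lands inside the copy of $\tau_2$. Here the deformation is order-sensitive: inserting $\tau_2$ at $v$ first redistributes the decoration of $v$ over $\tau_2$ and then grafts $\tau_1$ against the already-modified decorations, whereas in $(\tau_1\dinsert\tau_2)\dinsert\tau_3$ the roles are reversed. To reconcile the two orders I would use Lemma~\ref{lemma::InsertionAsGrossmanLarson} to rewrite insertion into the subtree rooted at $v$ in terms of the deformed Grossman--Larson product $\ast_+$, and then invoke the associativity of $\ast_+$, which holds because $(\mathfrak{T}^+,\ast_+,\Delta_{\shuffle})$ is a Hopf algebra. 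The multinomial identities governing $\uparrow^m$ and $\downarrow^m$ (as in Lemma~\ref{lemma::AlmostDerivation} and the displayed $\uparrow^2$ computation) ensure that, after summing over the intermediate decoration splittings, the two deformation orders produce identical weighted sums, so the nested terms cancel exactly. What then survives is precisely the symmetric sum over insertions of $\tau_1$ and $\tau_2$ into distinct vertices of $\tau_3$, yielding $a_{\dinsert}(\tau_1,\tau_2,\tau_3)=a_{\dinsert}(\tau_2,\tau_1,\tau_3)$ and hence the pre-Lie property. All steps except this coefficient bookkeeping are purely combinatorial and run parallel to the undeformed proof.
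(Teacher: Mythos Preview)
Your proposal is correct and takes essentially the same approach as the paper. The paper's proof is a one-line reference to Proposition~3.21 of \cite{BrunedManchon2022}, noting that Lemma~\ref{lemma::InsertionAsGrossmanLarson} makes that argument go through verbatim in the planar setting; your sketch unpacks exactly that argument---splitting the associator into distinct-vertex and nested contributions, and handling the nested ones via the identity $\tau_1\dinsert_v\tau_2=(P_v(\tau_2)\ast_+\tau_1)\graft_v T_v(\tau_2)$ together with associativity of $\ast_+$---so you have reconstructed the cited proof rather than found an alternative.
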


\begin{proof}
The proof for Proposition $3.21$ in \cite{BrunedManchon2022} applies here, by using Lemma \ref{lemma::InsertionAsGrossmanLarson}.
\end{proof}

Because $(\mathfrak{T}^-,\dinsert)$ is pre-Lie, we can use the Guin-Oudom construction \cite{GuinOudom2008} to extend $\dinsert$ to the symmetric algebra $S(\mathfrak{T}^-)$ by:

\begin{align*}
1 \dinsert \omega =& \omega, \\
\tau_1\omega \dinsert \tau_2 =& \tau_1 \dinsert (\omega \dinsert \tau_2) - (\tau_1 \dinsert \omega) \dinsert \tau_2,\\
\omega_1 \dinsert (\omega_2\omega_3) =& ((\omega_1)_{(1)} \dinsert \omega_2)((\omega_1)_{(2)} \dinsert \omega_3),
\end{align*}
for $\tau_1,\tau_2 \in \mathfrak{T}^-$ and $\omega,\omega_1,\omega_2,\omega_3 \in S(\mathfrak{T}^-)$.

\begin{lemma}
The expression
\begin{align*}
\tau_1\dots \tau_n \dinsert \omega,
\end{align*}
with $\tau_i \in \mathfrak{T}^-$ and $\omega \in S(\mathfrak{T^-})$, can be interpreted as a sum over all ways to pair $\tau_i's$ with vertices in $\omega$ and deformed insert each $\tau_i$ into their respective vertex. If the number of $\tau_i's$ exceed the number of vertices in $\omega$, the expression is zero.
\end{lemma}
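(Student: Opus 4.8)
The plan is to prove the statement by induction on $n$, using the three relations defining the Guin--Oudom extension of $\dinsert$ to $S(\mathfrak{T}^-)$ together with the pre-Lie property of $(\mathfrak{T}^-,\dinsert)$ established in the preceding Proposition. Throughout I read ``insert each $\tau_i$ into vertex $\phi(i)$'' as the operation $\dinsert_{\phi(i)}$, and I aim to show that $\tau_1\cdots\tau_n \dinsert \omega$ equals the sum, over all \emph{injective} maps $\phi$ from $\{1,\dots,n\}$ into the set $\widetilde{N}_\omega$ of non-noise vertices of $\omega$, of the term obtained by simultaneously deformed-inserting each $\tau_i$ into $\phi(i)$. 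The ``too many trees'' clause is then immediate: if $n>\#\widetilde{N}_\omega$ there are no injections, so the empty sum is $0$.

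\emph{Base case $n=1$.} For a single generator $\tau_1$ the co-Leibniz relation $\omega_1 \dinsert(\omega_2\omega_3)=((\omega_1)_{(1)}\dinsert\omega_2)((\omega_1)_{(2)}\dinsert\omega_3)$, applied with $\tau_1$ primitive, shows that $\tau_1\dinsert(-)$ is a derivation of the symmetric product. Iterating over a factorization $\omega=\sigma_1\cdots\sigma_m$ into trees and using the definition $\tau_1\dinsert\sigma_j=\sum_{v\in \widetilde{N}_{\sigma_j}}\tau_1\dinsert_v\sigma_j$ gives $\tau_1\dinsert\omega=\sum_{v\in \widetilde{N}_\omega}\tau_1\dinsert_v\omega$, which is exactly the sum over injections from a one-element set. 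This holds for every $\omega$.

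\emph{Inductive step.} Assuming the claim for all products of at most $n-1$ trees and all right-hand arguments, I first treat $\omega=\sigma$ a single tree. Writing $\omega''=\tau_2\cdots\tau_n$, the second defining relation gives
\begin{align*}
\tau_1\cdots\tau_n\dinsert\sigma = \tau_1\dinsert(\omega''\dinsert\sigma)-(\tau_1\dinsert\omega'')\dinsert\sigma.
\end{align*}
By the inductive hypothesis $\omega''\dinsert\sigma$ is a sum of single trees, each being $\sigma$ with $\tau_2,\dots,\tau_n$ inserted at distinct vertices; the base case then inserts $\tau_1$ at an arbitrary non-noise vertex of such a tree. The terms in which $\tau_1$ lands on a vertex coming from one of the embedded $\tau_j$, rather than on an original vertex of $\sigma$, are precisely the terms produced by $(\tau_1\dinsert\omega'')\dinsert\sigma$, and they cancel; what survives places $\tau_1$ on an original vertex of $\sigma$ distinct from those used by $\tau_2,\dots,\tau_n$, yielding the sum over injections $\{1,\dots,n\}\hookrightarrow \widetilde{N}_\sigma$. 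This cancellation is the same repeated use of the pre-Lie axiom that already drives Lemma \ref{lemma::InsertionIsPreLie}. For general $\omega=\sigma_1\cdots\sigma_m$ the co-Leibniz relation distributes $\tau_1\cdots\tau_n$ over the factors according to the $m$-fold coproduct; since the generators are primitive this is the sum over all ways to split the multiset $\{\tau_1,\dots,\tau_n\}$ into $m$ groups, and inserting group $j$ into $\sigma_j$ (via the single-tree case when a group has size $n$, and via the inductive hypothesis otherwise) reassembles exactly the injections into $\widetilde{N}_\omega=\bigsqcup_j \widetilde{N}_{\sigma_j}$.

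The step I expect to be the main obstacle is justifying the cancellation in the single-tree case in the presence of the deformation: a priori the binomial weights and decoration shifts attached to ``insert $\tau_1$ into $\tau_j$, then into $\sigma$'' need not match those of ``embed $\tau_j$ into $\sigma$, then insert $\tau_1$'', and likewise the planar ordering of grafted edges could be disturbed by changing the order of operations. The resolution is to avoid tracking these coefficients and orderings altogether: since $(\mathfrak{T}^-,\dinsert)$ has already been shown to be pre-Lie, via Lemma \ref{lemma::InsertionAsGrossmanLarson} and the subsequent Proposition, the entire Guin--Oudom extension is a purely formal consequence of the pre-Lie axiom. Hence the cancellation is automatic and identical to the non-planar, undeformed case, and the deformation never has to be verified by hand.
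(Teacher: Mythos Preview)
Your proof is correct and follows essentially the same inductive route as the paper: use the Guin--Oudom recursion $\tau_1\omega''\dinsert\sigma=\tau_1\dinsert(\omega''\dinsert\sigma)-(\tau_1\dinsert\omega'')\dinsert\sigma$, observe that the subtracted term cancels exactly those contributions where $\tau_1$ lands inside one of the already-inserted $\tau_j$ (including its root, which accounts for the ``same vertex'' case), and conclude that what remains is simultaneous insertion at distinct vertices. Your treatment is in fact more complete than the paper's sketch, since you spell out the base case $n=1$ and the passage from a single tree $\sigma$ to a general $\omega\in S(\mathfrak{T}^-)$ via the co-Leibniz rule.

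One small caveat on your final paragraph: the pre-Lie axiom alone does not quite make the cancellation ``automatic''. What you actually need is the locality identity $(\tau_1\dinsert_u\tau_j)\dinsert_v\sigma=\tau_1\dinsert_u(\tau_j\dinsert_v\sigma)$ for $u\in N_{\tau_j}$, which is a statement about how the deformed insertion at specific vertices composes, not merely the symmetry of the associator. This identity does hold, and it is a consequence of Lemma~\ref{lemma::InsertionAsGrossmanLarson} together with the associativity of $\ast_+$; the paper's proof is equally terse on this point and simply asserts the cancellation. So your argument is at the same level of rigour as the paper's, but the appeal to ``pre-Lie alone'' slightly overstates what is being used.
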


\begin{proof}
Note that $\tau_1\tau_2 \dinsert \omega = \tau_1 \dinsert (\tau_2 \dinsert \omega) - (\tau_1 \dinsert \tau_2) \dinsert \omega$ means deformed insertion of $\tau_1$ into $\tau_2 \dinsert \omega$, with the terms where $\tau_1$ is deformed inserted into $\tau_1$ removed. Hence both $\tau_1,\tau_2$ are deformed inserted into vertices of $\omega$. Furthermore, they most be inserted into different vertices, because the term in $(\tau_1 \dinsert \tau_2) \dinsert \omega$ where $\tau_1$ is inserted into the root of $\tau_2$ cancels out the term in $\tau_1 \dinsert (\tau_2 \dinsert \omega)$ where both trees are inserted into the same vertex. The same argument can be made inductively for $n \geq 2$.
\end{proof}

We now define a product $\ast_-$ on $S(\mathfrak{T}^-)$ by:

\begin{align*}
\omega_1 \ast_- \omega_2 =& (\omega_1)_{(1)}( (\omega_1)_{(2)} \dinsert \omega_2).
\end{align*}

\begin{lemma}
$(S(\mathfrak{T}^-),\ast_-,\Delta_{\shuffle})$ is a Hopf algebra.
\end{lemma}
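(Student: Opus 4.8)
The plan is to recognise the pair $(\ast_-,\Delta_{\shuffle})$ as precisely the output of the Oudom--Guin construction applied to the pre-Lie algebra $(\mathfrak{T}^-,\dinsert)$, and then to transport the conclusions of the general theorem. The preceding proposition establishes that $(\mathfrak{T}^-,\dinsert)$ is pre-Lie, and the three extension rules imposed on $S(\mathfrak{T}^-)$ together with the definition $\omega_1 \ast_- \omega_2 = (\omega_1)_{(1)}\bigl((\omega_1)_{(2)} \dinsert \omega_2\bigr)$ are verbatim the Grossman--Larson product of \cite{GuinOudom2008}. The first step is therefore just to observe that the hypotheses of that construction are met, so that all its conclusions apply directly.

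The second step is to read off those conclusions. The Oudom--Guin theorem gives that $\ast_-$ is associative and unital (with unit the empty product $1$), that $\Delta_{\shuffle}$ is a morphism of algebras for $\ast_-$ so that $(S(\mathfrak{T}^-),\ast_-,\Delta_{\shuffle})$ is a bialgebra, and moreover that this bialgebra is isomorphic to the universal enveloping algebra $U(\mathfrak{g})$ of the sub-adjacent Lie algebra $\mathfrak{g}=(\mathfrak{T}^-,[\cdot,\cdot])$ with $[\tau_1,\tau_2]=\tau_1\dinsert\tau_2-\tau_2\dinsert\tau_1$, under which $\Delta_{\shuffle}$ corresponds to the standard cocommutative coproduct rendering the Lie elements primitive. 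Since every enveloping algebra is a Hopf algebra, this identification already supplies the antipode and finishes the claim; this is the exact pre-Lie analogue of the reference \cite{EbrahimiFardLundervoldMuntheKaas2014} invoked for the post-Lie product $\ast_+$.

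Should one prefer a self-contained argument for the antipode rather than the enveloping-algebra identification, the grading supplies it. By the combinatorial description underlying $\dinsert$, deformed insertion preserves regularity, $|\tau_1 \dinsert \tau_2| = |\tau_1|+|\tau_2|$, because each $\ell$-deformation lowers a vertex decoration and the corresponding edge decoration by the same amount, whose contributions to the regularity cancel; hence $\ast_-$ is graded, $|\omega_1 \ast_- \omega_2| = |\omega_1|+|\omega_2|$, and $\Delta_{\shuffle}$ is grading-preserving. As every generating tree of $\mathfrak{T}^-$ has strictly negative regularity, the regularity-zero component of $S(\mathfrak{T}^-)$ is exactly $\mathbb{K}\,1$, so the bialgebra is connected graded and the antipode is obtained by the usual recursion over the grading.

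I expect the only genuine content to be the verification already carried out in the previous proposition, that $\dinsert$ is pre-Lie; granting this, the present statement is formal. The single point needing a line of care is that the extension rules and the formula for $\ast_-$ coincide with the Oudom--Guin recipe in the planar, decorated, deformed setting, so that \cite{GuinOudom2008} applies unchanged. This is the planar counterpart of the corresponding step in \cite{BrunedManchon2022}, and no new obstacle arises precisely because the deformation has already been absorbed into the pre-Lie product $\dinsert$ itself.
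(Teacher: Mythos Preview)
Your proposal is correct and follows exactly the paper's own approach: the paper's proof is the single line ``This is a standard result for the Guin--Oudom construction,'' and you have simply unpacked that reference, first invoking \cite{GuinOudom2008} directly and then offering an optional connectedness argument for the antipode. The additional remark on the grading is a harmless elaboration rather than a different route.
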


\begin{proof}
This is a standard result for the Guin-Oudom construction.
\end{proof}

We call $(S(\mathfrak{T}^-),\ast_-,\Delta_{\shuffle})$ the Hopf algebra for negative renormalisation. 

Recall that $\dinsert$ was also an action $\dinsert: \mathfrak{T}^- \otimes \mathfrak{T} \to \mathfrak{T}$. We now extend this to an action $\dinsert: S(\mathfrak{T}^-) \otimes \mathfrak{T} \to \mathfrak{T}$ by:

\begin{align*}
\tau_1 \omega \dinsert \tau_2 = \tau_1 \dinsert (\omega \dinsert \tau_2) - (\tau_1 \dinsert \omega) \dinsert \tau_2,
\end{align*}
for $\tau_1 \in \mathfrak{T}^-$, $\omega \in S(\mathfrak{T}^-)$ and $\tau_2 \in \mathfrak{T}$. Denote the dual coaction by $\Delta^- : \mathfrak{T} \to S(\mathfrak{T}^-) \otimes \mathfrak{T}$. Now recall Definition \ref{def::AdmissiblePartitionRough} for admissible partitions to describe renormalisation of rough paths. We will use an almost identical definition to describe $\Delta^-$, with the major difference being that we now need to take in account decorations when we contract subtrees.

\begin{definition} \label{def::AdmissiblePartitionReg}
Let $\tau \in \mathfrak{T}$ be a tree and let $\tau_1,\dots,\tau_n$ be a set of disjoint (not necessarily spanning) subtrees of $\tau$. We say that $\tau_1,\dots,\tau_n$ is $\mathfrak{T}^-$-admissible if the following conditions hold:
\begin{enumerate}
\item If $e$ is an edge in $\tau_i$ and $e'$ is an edge in $\tau$ that is outgoing from the same vertex as $e$, and if $e'$ is to the right of $e$ in the planar embedding, then $e'$ is in $\tau_i$.
\item If $e=(v_1,v_2)$ is a noise edge in $\tau$ then either both $v_1$ and $v_2$ belong to the same subtree, or neither $v_1$ or $v_2$ belong to any subtree.
\item Each $\tau_i$ has negative regularity.
\end{enumerate}
For $\tau_1,\dots,\tau_n$ $\mathfrak{T}^-$-admissible, denote by $\tau / \tau_1 \dots \tau_n$ the tree obtained by contracting each $\tau_i$ into a single vertex. For each contraction, sum over all ways to reduce the decoration of vertices in $\tau_i$ and putting the sum of the reductions onto the vertex the subtree was contracted into. Furthermore, for every edge that is outgoing from the subtree, sum over all ways to simultaneously increase the typing of the edge and the decoration of its adjacent vertex in the subtree (such that the regularity of the subtree remains negative). Everything weighted by the appropriate combinatorial factor.
\end{definition}

\begin{proposition}
The coaction $\Delta^-$ can be described by:
\begin{align*}
\Delta^-(\tau)=\sum_{\tau_1,\dots,\tau_n \atop \mathfrak{T}^-\text{-admissible}} \tau_1 \dots \tau_n \otimes \tau/\tau_1 \dots \tau_n.
\end{align*}
\end{proposition}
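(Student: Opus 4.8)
The plan is to prove this by the same duality argument used for the positive coaction $\Delta^+$ in Section \ref{ssect:PositiveHopf}, now transported to the insertion side. By definition $\Delta^-$ is dual to the action $\dinsert : S(\mathfrak{T}^-)\otimes \mathfrak{T}\to \mathfrak{T}$, so, treating the tree basis of $\mathfrak{T}$ and the forest basis of $S(\mathfrak{T}^-)$ as self-dual, the coefficient of $\tau_1\cdots\tau_n \otimes \sigma$ in $\Delta^-(\tau)$ equals the coefficient of $\tau$ in $\tau_1\cdots\tau_n \dinsert \sigma$. The statement therefore reduces to reading off, for each basis tree $\tau$, which products $\tau_1\cdots\tau_n \dinsert \sigma$ produce $\tau$ and with what weight. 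First I would invoke the lemma on iterated deformed insertion, which expresses $\tau_1\cdots\tau_n \dinsert \sigma$ as the sum over all ways of pairing the $\tau_i$ with distinct non-noise vertices of $\sigma$ and deformed-inserting each $\tau_i$ into its assigned vertex.

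The second step sets up the bijection between the two index sets. Inserting subtrees $\tau_1,\dots,\tau_n$ into distinct non-noise vertices of some $\sigma$ and recovering $\tau$ is precisely the reverse of contracting $n$ disjoint subtrees of $\tau$ to single vertices, so the $\sigma$ that contribute are exactly the quotients $\sigma=\tau/\tau_1\cdots\tau_n$ for disjoint subtrees $\tau_1,\dots,\tau_n$ of $\tau$. I would then verify that the three conditions of Definition \ref{def::AdmissiblePartitionReg} are exactly the constraints under which such a $\sigma$ together with insertion data exists: condition $(1)$ is forced by the planar left-grafting rule in the definition of $\dinsert_v$, which reattaches the outgoing edges of the insertion vertex only in a way compatible with the planar order; condition $(2)$ is forced because $\dinsert$ inserts only into vertices of $\widetilde{N}$, so a contracted subtree can neither begin nor end in the middle of a noise edge; and condition $(3)$ is simply $\tau_i\in\mathfrak{T}^-$. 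This identifies the two sums term by term.

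The last and main step is to match the combinatorial weights. On the insertion side each $\dinsert_v$ carries a multinomial factor distributing the decoration of $v$ among the vertices of $\tau_i$ together with binomial factors deforming each reattached edge and its adjacent vertex, exactly as in the definition of $\dinsert$ and as encoded through $\ast_+$ by Lemma \ref{lemma::InsertionAsGrossmanLarson}. Dualising an action replaces each grafting coefficient by its reciprocal, just as the deformed tensor product $\hat{\otimes}$ did for $\Delta^+$; the contraction $\tau/\tau_1\cdots\tau_n$ in Definition \ref{def::AdmissiblePartitionReg} was defined precisely to package these reciprocal weights, namely the sum over reductions of the contracted vertex's decoration and the simultaneous increase of each outgoing edge's typing with its adjacent in-subtree vertex decoration. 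I expect the genuine obstacle to be this coefficient bookkeeping: one must check that the weight attached to each term of $\tau/\tau_1\cdots\tau_n$ is exactly the reciprocal of the deformed-grafting coefficient produced by the corresponding insertion, and that the unordered product in $S(\mathfrak{T}^-)$ and the unordered partition sum introduce no spurious symmetry factor. Since insertion and contraction are mutually inverse on the underlying skeletons and the deformation coefficients are the same ones already reconciled for $\ast_+$ and $\Delta^+$, this collapses to a direct comparison of binomials, which I would carry out on a single generic term; the planar case differs from \cite{BrunedManchon2022} only in the bookkeeping of the planar order, which is already absorbed into condition $(1)$.
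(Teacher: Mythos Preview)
Your proposal is correct and follows essentially the same approach as the paper: both argue by duality with the insertion action $\dinsert$, first matching the undecorated skeletons (which the paper dispatches by pointing to the planarly branched rough path case already treated in Section~\ref{sec::PlanarRoughPathsAreRegularityStructures}) and then matching the decorations by reading the vertex-decoration spreading and the edge-deformation in $\dinsert$ as the dual of the two summations in Definition~\ref{def::AdmissiblePartitionReg}. Your write-up is considerably more explicit than the paper's three-sentence proof, in particular in spelling out why conditions (1)--(3) of $\mathfrak{T}^-$-admissibility are exactly the constraints forced by left-grafting, the restriction to $\widetilde{N}$, and membership in $\mathfrak{T}^-$; the paper simply asserts that ``the situation is exactly the same as for renormalisation of planarly branched rough paths'' for the skeleton part.
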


\begin{proof}
Ignoring the decorations of the tree, the situation is exactly the same as for renormalisation of planarly branched rough paths. Hence we must contract the same type of subtrees as in that case. It remains to give the trees the correct decorations. Edges going out of a subtree are dually described as one subtree being deformed-grafted onto another, hence we have to sum over ways to increase the decoration. Dually, the vertex we are inserting the subtree into, has its decoration spread out over all vertices in the subtree, hence we have to sum over all ways to reverse this.
\end{proof}

\begin{corollary}
Restricting $\Delta^-$ to trees with no vertex decorations and edge decorations $\mathcal{D}=\Xi$ recovers the coaction from section \ref{sec::PlanarRoughPathsAreRegularityStructures} for renormalising planarly branched rough paths.
\end{corollary}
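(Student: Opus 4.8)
The plan is to prove the corollary by comparing the two \emph{combinatorial} descriptions of $\Delta^-$ that have already been established: the description of the planar regularity structure coaction in terms of $\mathfrak{T}^-$-admissible partitions (Definition \ref{def::AdmissiblePartitionReg} together with the preceding proposition), and the description of the planarly branched rough path renormalisation coaction in terms of $\mathcal{PB}_-$-admissible partitions (Definition \ref{def::AdmissiblePartitionRough} together with its proposition). Both coactions are sums of the same shape,
\begin{align*}
	\Delta^-(\tau) = \sum_{\tau_1,\dots,\tau_n \text{ admissible}} \tau_1 \cdots \tau_n \otimes \tau/\tau_1\cdots\tau_n,
\end{align*}
so it suffices to show that, once we restrict to trees with no vertex decorations and with edges decorated only by $\Xi$, the index set of admissible partitions and the contracted trees $\tau/\tau_1\cdots\tau_n$ coincide. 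First I would record that the restricted model space $\mathfrak{T}$ is identified with $\mathcal{PB}$: with all vertex decorations set to $0\in\mathbb{N}^d$ and all edge decorations lying in $\Xi$, a tree of $\mathfrak{T}$ is precisely a planar tree whose only decorated edges are noise edges adjacent to leaves, which is the defining data of $\mathcal{PB}$. Under this identification $\mathfrak{T}^-$ becomes $\mathcal{PB}_-$, the two degree conventions being normalised to agree (a noise edge contributes $\alpha_i-1$ in both, and the plain integration edge contributes $+1$ in both). This is the same restriction under which the corresponding statement for $\Delta^+$ was already shown to recover the rough path regularity structure.

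Next I would check that the two notions of admissibility become literally the same. Condition (1) (planar right-closure of the subtrees) and condition (3) (each $\tau_i$ has negative regularity) are stated identically in Definitions \ref{def::AdmissiblePartitionRough} and \ref{def::AdmissiblePartitionReg}. The only place the two definitions differ is condition (2): in the regularity structure setting it constrains \emph{noise} edges, while in the rough path setting it constrains edges of \emph{non-zero} decoration. Under the restriction with no vertex decorations and edge decorations in $\Xi$, an edge carries a non-zero decoration if and only if it is a noise edge, so the two conditions select the same admissible families. Hence the index sets of the two sums are in bijection via the identification $\mathfrak{T}\cong\mathcal{PB}$.

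It then remains to match the summands, and here the key point is that the decoration bookkeeping built into the contraction $\tau/\tau_1\cdots\tau_n$ of Definition \ref{def::AdmissiblePartitionReg} degenerates. That definition instructs us, when contracting a subtree $\tau_i$, to sum over all ways of lowering the vertex decorations of $\tau_i$ (placing the total on the contracted vertex) and of raising the $\mathbb{N}^d$-typing of each outgoing edge together with its adjacent vertex, each term weighted by a binomial factor. With no vertex decorations there is nothing to lower, and with edge decorations restricted to $\Xi$ there is no $\mathbb{N}^d$-typing to raise; every such sum collapses to its single $0$-shift term and every combinatorial coefficient becomes $\binom{0}{0}=1$. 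Thus $\tau/\tau_1\cdots\tau_n$ reduces to plain topological contraction of subtrees, exactly the contraction in Definition \ref{def::AdmissiblePartitionRough}. The underlying reason, mirroring the $\Delta^+$ argument, is that the deformed insertion $\dinsert$ restricts to the undeformed planar insertion $\diamond$ once the deforming coefficients $\binom{n_v}{\ell}$ become $\binom{0}{\ell}=\delta_{\ell,0}$, and the Section \ref{sec::PlanarRoughPathsAreRegularityStructures} coaction $(\varphi\otimes\varphi)\rho_0^-$ is precisely the dual of this undeformed insertion.

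Putting these observations together, the two sums defining $\Delta^-$ agree term by term, which proves the corollary. I expect the only genuinely delicate point to be the normalisation check in the last step: one must verify that the $0$-shift really is the unique surviving term of every decoration sum and that the weights are set up so that this surviving coefficient is $1$ rather than a leftover binomial factor, so that the decorated contraction of Definition \ref{def::AdmissiblePartitionReg} lands exactly on the factor-free contraction of Definition \ref{def::AdmissiblePartitionRough}. This is where I would align the combinatorial factors most carefully.
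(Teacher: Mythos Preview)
Your proposal is correct and is precisely the argument the paper has in mind: the corollary is stated without proof because it follows immediately from comparing the combinatorial descriptions of the two coactions (Definitions \ref{def::AdmissiblePartitionRough} and \ref{def::AdmissiblePartitionReg}), and you have spelled out exactly that comparison. Your identification of condition (2) as the only place the definitions differ, and your observation that the decoration-deformation sums collapse to the single $\ell=0$ term with coefficient $1$ when all vertex decorations vanish, are the two points that make the corollary immediate.
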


\begin{example}
Let $\Xi$ denote a noise edge, then:
\begin{align*}
&\Delta^-({\color{blue} \Forest{[k_1[k_2,edge label = {node[midway,fill=white,scale=0.5]{$d_1$}}][k_3,edge label = {node[midway,fill=white,scale=0.5]{$d_2$}}[k_4,edge label = {node[midway,fill=white,scale=0.5]{$d_3$}}]][k_5,edge label = {node[midway,fill=white,scale=0.5]{$\Xi$}}]]} }  )
=\sum_{\delta=\delta_1+\dots+\delta_4} \frac{1}{ {\delta \choose \delta_1, \dots,\delta_4 } } {\color{blue} \Forest{[k_1-\delta_1[k_2-\delta_2,edge label ={node[midway,fill=white,scale=0.5]{$d_1$}}][k_3-\delta_3,edge label = {node[midway,fill=white,scale=0.5]{$d_2$}}[k_4-\delta_4,edge label = {node[midway,fill=white,scale=0.5]{$d_3$}}]][k_5,edge label = {node[midway,fill=white,scale=0.5]{$\Xi$}}]]} } \otimes {\color{blue} \Forest{[\delta]} }\\
+& \sum_{\delta,\ell_1,\ell_2} \frac{1}{ {k_1+\ell_1+\ell_2-\delta \choose \ell_1,\ell_2} } {\color{blue} \Forest{[k_1-\delta+\ell_1+\ell_2[k_5,edge label = {node[midway,fill=white,scale=0.5]{$\Xi$}}]]} } \otimes {\color{blue} \Forest{[\delta[k_2,edge label = {node[midway,fill=white,scale=0.5]{$d_1+\ell_1$}}][k_3,edge label = {node[midway,fill=white,scale=0.5]{$d_2+\ell_2$}}[k_4,edge label = {node[midway,fill=white,scale=0.5]{$d_3$}}]]]}}\\
+& \sum_{\delta=\delta_1+\delta_2,\ell_1,\ell_2} \frac{1}{{\delta \choose \delta_1,\delta_2} {k_1+\ell_1-\delta_1 \choose \ell_1} {k_2+\ell_2-\delta_2 \choose \ell_2} } {\color{blue} \Forest{[k_1-\delta_1+\ell_1[k_3-\delta_2+\ell_2,edge label = {node[midway,fill=white,scale=0.5]{$d_2$}}][k_5,edge label = {node[midway,fill=white,scale=0.5]{$\Xi$}}]]} } \otimes ({\color{blue} \Forest{[\delta[k_2,edge label = {node[midway,fill=white,scale=0.5]{$d_1+\ell_1$}}][k_4,edge label = {node[midway,fill=white,scale=0.5]{$d_3+\ell_2$}}]]} } + {\color{blue} \Forest{[\delta[k_4,edge label = {node[midway,fill=white,scale=0.5]{$d_3+\ell_2$}}][k_2,edge label = {node[midway,fill=white,scale=0.5]{$d_1+\ell_1$}}]]} }  )\\
+&\sum_{\delta=\delta_1+\dots+\delta_3,\ell_1} \frac{1}{{\delta \choose \delta_1,\dots,\delta_3} {k_1+\ell_1-\delta_1 \choose \ell_1} } {\color{blue} \Forest{[k_1-\delta_1+\ell_1[k_3-\delta_2,edge label = {node[midway,fill=white,scale=0.5]{$d_w$}}[k_4-\delta_3,edge label = {node[midway,fill=white,scale=0.5]{$d_3$}}]][k_5,edge label = {node[midway,fill=white,scale=0.5]{$\Xi$}}]]} } \otimes {\color{blue} \Forest{[\delta[k_2,edge label = {node[midway,fill=white,scale=0.5]{$d_1+\ell_1$}}]]} }\\
+&\sum_{\delta=\delta_1+\dots,\delta_3,\ell_1} \frac{1}{{\delta \choose \delta_1,\dots,\delta_3} {k_3+\ell_1-\delta_3 \choose \ell_1} } {\color{blue} \Forest{[k_1-\delta_1[k_2-\delta_2,edge label = {node[midway,fill=white,scale=0.5]{$d_1$}}][k_3-\delta_3+\ell_1,edge label = {node[midway,fill=white,scale=0.5]{$d_2$}}][k_5,edge label = {node[midway,fill=white,scale=0.5]{$\Xi$}}]]} } \otimes {\color{blue} \Forest{[\delta[k_4,edge label = {node[midway,fill=white,scale=0.5]{$d_3+\ell_1$}}]]}  }\\
+& \sum_{\delta,\ell_1, \atop \delta'=\delta'_1+\delta'_2} \frac{1}{{\delta' \choose \delta'_1,\delta'_2} {k_1+\ell_1+\ell_2-\delta_1 \choose \ell_1,\ell_2} } {\color{blue} \Forest{[k_1-\delta+\ell_1+\ell_2[k_5,edge label = {node[midway,fill=white,scale=0.5]{$\Xi$}}]]} \Forest{[k_3-\delta'_1[k_4-\delta'_2,edge label = {node[midway,fill=white,scale=0.5]{$d_3$}}]]} } \otimes {\color{blue}  \Forest{[\delta[k_3,edge label = {node[midway,fill=white,scale=0.5]{$d_1+\ell_1$}}][\delta',edge label = {node[midway,fill=white,scale=0.5]{$d_2+\ell_2$}}]]}},
\end{align*}
where we note that the sums are finite as we interpret any term in the left tensor, that has positive regularity, as zero. Note that we do not include subtrees consisting of a single vertex, as these always has positive regularity. Also note that we do not subtract decorations from the vertex $k_5$, as this would dually correspond to grafting on a noise vertex.
\end{example}

\subsection{Cointeraction}

We are now interested in the cointeraction property 
\begin{align} \label{eq::Cointeraction}
(Id \otimes \Delta^+)\Delta^- = m^{1,3}(\Delta^- \otimes \Delta^-)\Delta^+,
\end{align}
where
\begin{align*}
m^{1,3}(a \otimes b \otimes c \otimes d)=ac \otimes b \otimes d.
\end{align*}
It is this property that makes the proof of Proposition \ref{prop::RoughPathRenormalisedModelIsModel} work, i.e., to show that the renormalised model is a model for planarly branched rough paths. The cointeraction property is important for regularity structures for the same reason. Because of this, we consider \eqref{eq::Cointeraction} important also for planar regularity structures.\\
In equation \eqref{eq::Cointeraction}, the left $\Delta^-$ in the right side of the equality will take an input argument from $\mathfrak{T}^+$. This is not problematic if one sees $\mathfrak{T}^+$ as a subspace of $\mathfrak{T}$, but that is not the correct way to read the equation. We want to define $\Delta^-$ on $\mathfrak{T}^+$ in such a way that it doesn't contract any subtree containing the root. In \cite{BrunedHairerZambotti2019}, this was achieved by giving the root that is added by $\Delta^+$ a special colour that is not allowed to be in an admissible partition. In \cite{BrunedManchon2022}, the authors defined a coaction $\Delta^-_{\rm{non-root}}$ that does not contract subtrees containing the root. For (planarly) branched rough paths, this was not an issue as no subtree containing the root could be negative. In this paper, we will use the solution from \cite{BrunedManchon2022} and we define $\Delta^-_{\rm{non-root}}$ as the coaction that contracts all admissible partitions that do not contain the root.\\
This is however not sufficient to get the cointeraction property, as there is furthermore an issue with $\Delta^+$ projecting onto positive trees in its left tensor. This issue was solved for regularity structures, in \cite{BrunedHairerZambotti2019}, by considering so-called extended decorations. We will consider the extended decorations later in the section, but first we follow \cite{BrunedManchon2022} and establish the cointeraction property when we forgo the projection part of $\Delta^+$. Define $\Delta^+_0: \mathfrak{T} \to \mathfrak{T} \overline{\otimes} \mathfrak{T}$ to be the same as $\Delta^+$, except for also keeping negative trees in the left tensor. We now have to consider the completed tensor $\overline{\otimes}$, as the deformation will give an infinite sum.

\begin{proposition} \label{prop::Cointeraction}
The cointeraction property 
\begin{align} \label{eq::CointeractionNonRoot}
(Id \otimes \Delta^+_0)\Delta^- = m^{1,3}(\Delta^-_{\rm{non-root}} \otimes \Delta^-)\Delta^+_0,
\end{align}
 holds.
\end{proposition}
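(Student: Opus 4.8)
The plan is to prove \eqref{eq::CointeractionNonRoot} by dualising it into a distributivity statement relating the deformed insertion action $\dinsert : S(\mathfrak{T}^-)\otimes \mathfrak{T}\to\mathfrak{T}$ and the deformed Grossman--Larson product $\ast_+$, and then verifying that statement. Since $\Delta^+_0$ is dual to $\ast_+$ and $\Delta^-$ is dual to $\dinsert$, the identity \eqref{eq::CointeractionNonRoot} is equivalent to the assertion that, writing $\Delta_{\shuffle}(\sigma)=\sum_{(\sigma)}\sigma_{(1)}\otimes\sigma_{(2)}$, for every $h\in\mathfrak{T}^+$, every $\tau\in\mathfrak{T}$ and every $\sigma\in S(\mathfrak{T}^-)$ one has
\[
\sigma \dinsert (h \ast_+ \tau) \;=\; \sum_{(\sigma)} \bigl(\sigma_{(1)} \dinsert_{\mathrm{nr}} h\bigr) \ast_+ \bigl(\sigma_{(2)} \dinsert \tau\bigr),
\]
where $\dinsert_{\mathrm{nr}}$ denotes insertion into the non-root vertices of $h$ only. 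The role of $\dinsert_{\mathrm{nr}}$ on the first factor is forced rather than chosen: in $h \ast_+ \tau$ the root of $h$ is absorbed by the grafting described in \eqref{eq::DeformedGrossmanLarson}, so it is never a target for a subsequent insertion, and this is exactly the prohibition that $\Delta^-_{\mathrm{non-root}}$ imposes on the artificial $B_+$-root of the pruned part on the right-hand side of \eqref{eq::CointeractionNonRoot}.

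First I would reduce to the generating case. Both $\ast_+$ and $\dinsert$ are produced from their post-Lie, respectively pre-Lie, generators by the Guin--Oudom recursions \eqref{eq::DAlg1}--\eqref{eq::DAlg3}, so by cocommutativity of $\Delta_{\shuffle}$ it suffices to establish the displayed identity when $\sigma$ is a single negative tree $t\in\mathfrak{T}^-$ and then propagate it through these recursions; the step from one insertion to several is the inductive argument already used in the lemma describing $\tau_1\cdots\tau_n \dinsert \omega$. For a single $t$ the combinatorial skeleton is transparent: by Lemma \ref{lemma::InsertionAsGrossmanLarson} insertion at a vertex is itself governed by $\ast_+$, and the vertex set of $h\ast_+\tau$ is the disjoint union of the vertices of $\tau$ with the branch-vertices of $h$, so a single insertion of $t$ distributes as either an insertion into $\tau$ or a non-root insertion into $h$. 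On the level of undecorated skeleta this is precisely the cointeraction between $\rho_0^-$ and $\Delta_{MKW}$ for planarly branched rough paths from \cite{Rahm2021RP} already invoked in the proof of Proposition \ref{prop::RoughPathRenormalisedModelIsModel}; it fixes the underlying bijection of terms, using the key fact that an extracted negative subtree never straddles a left-admissible cut (an edge internal to an extracted subtree disappears under contraction and so cannot be a cut edge, whence each extracted subtree lies entirely above or entirely below the cut).

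The main obstacle is matching the deformation decorations. On the left of the displayed identity, $t$ is inserted into $h\ast_+\tau$ \emph{after} the root decoration of $h$ has been spread over the vertices of $\tau$ by $\uparrow^m_{N_{\tau}}$ and after the grafted edges of $h$ have been deformed, whereas on the right $t$ is inserted into $h$ \emph{before} the Grossman--Larson product is formed. To reconcile the two orders I would commute the operators $\uparrow^m$ and $\downarrow^m$ past grafting and insertion by means of the ``almost-derivation'' identity of Lemma \ref{lemma::AlmostDerivation} and its insertion analogue; the binomial coefficients generated by the two deformation procedures then recombine by Vandermonde's identity, exactly as in the proof of the cointeraction in \cite{BrunedManchon2022}. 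The genuinely delicate terms are the edges that are simultaneously outgoing from the inserted subtree $t$ and adjacent to the grafting performed by $\ast_+$ (equivalently, to the cut of \eqref{eq::CointeractionNonRoot}): for such an edge one must check that the factor $\binom{n_v}{\ell}$ of \eqref{eq::DeformedGrossmanLarson} and the decoration shifts prescribed in Definition \ref{def::AdmissiblePartitionReg} assemble into the same total weight whether the insertion-deformation or the grafting-deformation is performed first.

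I would close by recording why the argument terminates at this level of generality. Because \eqref{eq::CointeractionNonRoot} is stated for the unprojected $\Delta^+_0$ and the completed tensor $\overline{\otimes}$, no positivity projection intervenes in the left-hand factor, so no extended decorations are needed here; the deformation computation of the previous paragraph upgrades the skeletal bijection of the second paragraph to an equality of fully decorated terms, giving the displayed product identity and hence \eqref{eq::CointeractionNonRoot}.
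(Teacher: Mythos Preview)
Your proposal is correct, and at its core it is the same argument as the paper's: both match the underlying combinatorial skeleta using the already-established cointeraction for undecorated (planarly branched rough path) trees, and then reconcile the deformation decorations and binomial weights via Chu--Vandermonde. The packaging, however, is genuinely different. The paper works directly on the coaction side: it fixes a left-admissible cut and a $\mathfrak{T}^-$-admissible partition and checks, in three explicit cases (subtree above the cut; subtree below the cut but not touching a cut edge; subtree with a cut edge emanating from it), that performing cut and contraction in either order yields the same decorations and coefficients. You instead dualise to the distributivity law
\[
\sigma \dinsert (h \ast_+ \tau) \;=\; \sum_{(\sigma)} \bigl(\sigma_{(1)} \dinsert_{\mathrm{nr}} h\bigr) \ast_+ \bigl(\sigma_{(2)} \dinsert \tau\bigr)
\]
and reduce to the generator case $\sigma=t$ via the enveloping-algebra argument. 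Two small points: since you are dualising $\Delta^+_0$ rather than $\Delta^+$, the first argument $h$ should range over all of $\mathfrak{T}$, not only $\mathfrak{T}^+$; and your ``delicate case'' (edges simultaneously outgoing from the inserted subtree and involved in the $\ast_+$-grafting) is precisely the paper's third case, which it computes in full rather than deferring to \cite{BrunedManchon2022}. Your dualised formulation is cleaner conceptually and makes the reduction to generators transparent; the paper's direct case analysis is more self-contained because the Vandermonde computations are written out explicitly, which matters since the planar left-grafting convention does alter the bookkeeping relative to the non-planar setting of \cite{BrunedManchon2022}.
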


Before we prove the proposition, we want to demonstrate the idea of the proof with an example. We know that both sides of \eqref{eq::CointeractionNonRoot} will produce trees with the same skeletons, possibly only differing by the decorations, because the cointeraction property holds for these coactions on undecorated trees. We have to show that both sides agree also on the decorations. A skeleton from the left side of \eqref{eq::Cointeraction}, applied to the tree $\omega$, is obtained by:
\begin{enumerate}
\item Fix a $\mathfrak{T}^-$-admissible partition $\tau_1,\dots,\tau_n$ of $\omega$.
\item Fix an admissible cut of $\omega$ that does not cut any edges internal to a $\tau_i$.
\item Put $\tau_1,\dots,\tau_n$ in the leftmost tensor.
\item Contract the $\tau_i$'s and perform the cut. Put the pruned part of the middle tensor, and the trunk in the rightmost tensor.
\end{enumerate}
The corresponding skeleton in the right side of \eqref{eq::CointeractionNonRoot} is obtained by first considering the same cut, and then noting that $\tau_1,\dots,\tau_n$ can be split into one $\mathfrak{T}^-$-admissible partition for the pruned part, and one $\mathfrak{T}^-$-admissible partition for the trunk. What needs to be shown is that both orders of operations will deform the decorations in the same way.

\begin{example} \label{ex::Cointeraction}
Consider the tree
\begin{align*}
{\color{blue} \Forest{ [k_1 [k_2,edge label = {node[midway,fill=white,scale=0.5]{$d_1$}}] [k_3,edge label = {node[midway,fill=white,scale=0.5]{$d_2$}} [k_4,edge label = {node[midway,fill=white,scale=0.5]{$d_3$}}] [k_5,edge label = {node[midway,fill=white,scale=0.5]{$\Xi_2$}}] ] [k_6,edge label = {node[midway,fill=white,scale=0.5]{$\Xi_1$}}] ] } },
\end{align*}
together with the admissible cut $d_1,d_2$ and the $\mathfrak{T}^-$-admissible partition
\begin{align*}
{\color{blue} \Forest{[k_1[k_6,edge label = {node[midway,fill=white,scale=0.5]{$\Xi_1$}}]]} } , {\color{blue} \Forest{[k_3[k_4,edge label = {node[midway,fill=white,scale=0.5]{$d_3$}}][k_5,edge label = {node[midway,fill=white,scale=0.5]{$\Xi_2$}}]]}  }.
\end{align*}
We now want to first perform the deformed cut, followed by contraction of the partition. And then first perform the contraction, followed by the cut. Performing the cut gives:
\begin{align*}
\sum_{\delta,\ell_1,\ell_2}  \frac{1}{{k_1+\ell_1+\ell_2 - \delta \choose \ell_1,\ell_2 } } {\color{blue} \Forest{[\delta[k_2,edge label = {node[midway,fill=white,scale=0.5]{$d_1+\ell_1$}}] [k_3,edge label = {node[midway,fill=white,scale=0.5]{$d_2+\ell_2$}} [k_4,edge label = {node[midway,fill=white,scale=0.5]{$d_3$}}] [k_5,edge label = {node[midway,fill=white,scale=0.5]{$\Xi_2$}}] ] ]} } \overline{\otimes} {\color{blue} \Forest{[k_1-\delta+\ell_1+\ell_2 [k_6,edge label = {node[midway,fill=white,scale=0.5]{$\Xi_1$}}] ]} }.
\end{align*}
Then the contraction gives the following term in the right side of \eqref{eq::CointeractionNonRoot}:
\begin{align} \label{eq::RightSide}
m^{1,3}\sum_{\delta,\ell_1,\ell_2}  \frac{1}{ {k_1+\ell_1+\ell_2 - \delta \choose \ell_1,\ell_2 } } [&(\sum_{n=n_1+n_2} \frac{1}{{n \choose n_1,n_2}} {\color{blue} \Forest{[k_3-n_1[k_4-n_2,edge label = {node[midway,fill=white,scale=0.5]{$d_3$}}] [k_5,edge label = {node[midway,fill=white,scale=0.5]{$\Xi_2$}}] ]}  } \otimes {\color{blue} \Forest{[\delta[k_2,edge label = {node[midway,fill=white,scale=0.5]{$d_1+\ell_1$}}][n,edge label = {node[midway,fill=white,scale=0.5]{$d_2+\ell_2$}}]]} })\\
\overline{\otimes}& (\sum_{m} {\color{blue} \Forest{[k_1 - \delta_1 + \ell_1 + \ell_2 - m[k_6,edge label = {node[midway,fill=white,scale=0.5]{$\Xi_1$}}]]} } \otimes {\color{blue} \Forest{[m]} })]. \nonumber
\end{align}
If we instead were to first do the contraction, we get:
\begin{align*}
\sum_{n=n_1+n_2\atop \mu,\kappa_1,\kappa_2} \frac{1}{ {n \choose n_1,n_2} {k_1+\kappa_1+\kappa_2-\mu \choose \kappa_1,\kappa_2}} {\color{blue} \Forest{ [k_3-n_1[k_2-n_2,edge label = {node[midway,fill=white,scale=0.5]{$d_3$}}] [k_5,edge label = {node[midway,fill=white,scale=0.5]{$\Xi_2$}}] ]  } \Forest{[k_1-\mu+\kappa_1+\kappa_2[k_6,edge label = {node[midway,fill=white,scale=0.5]{$\Xi_1$}}]]} } \otimes {\color{blue} \Forest{[\mu[k_2,edge label = {node[midway,fill=white,scale=0.5]{$d_1+\kappa_1$}}] [n,edge label = {node[midway,fill=white,scale=0.5]{$d_2+\kappa_2$}}] ]}   }.
\end{align*}
Then performing the cut gets us the following term in the left side of \eqref{eq::CointeractionNonRoot}:
\begin{align} \label{eq::LeftSide}
\sum_{n=n_1+n_2\atop \mu,\kappa_1,\kappa_2} \frac{1}{ {n \choose n_1,n_2} {k_1+\kappa_1+\kappa_2-\mu \choose \kappa_1,\kappa_2}} {\color{blue} \Forest{ [k_3-n_1[k_2-n_2,edge label = {node[midway,fill=white,scale=0.5]{$d_3$}}] [k_5,edge label = {node[midway,fill=white,scale=0.5]{$\Xi_2$}}] ]  } \Forest{[k_1-\mu+\kappa_1+\kappa_2[k_6,edge label = {node[midway,fill=white,scale=0.5]{$\Xi_1$}}]]} } \overline{\otimes} [\sum_{\epsilon,\kappa_1',\kappa_2'} \frac{1}{{\mu+\kappa_1'+\kappa_2'-\epsilon \choose \kappa_1',\kappa_2'}} {\color{blue} \Forest{[\epsilon[k_2,edge label = {node[midway,fill=white,scale=0.5]{$d_1+\kappa_1\atop +\kappa_1'$}}] [n,edge label = {node[midway,fill=white,scale=0.5]{$d_2+\kappa_2\atop +\kappa_2'$}}] ]} } \overline{\otimes} {\color{blue}\Forest{[\mu-\epsilon+\kappa_1'+\kappa_2']}}].
\end{align}
The claim is now that \eqref{eq::RightSide} and \eqref{eq::LeftSide} are equal. Note that we can match up the decorations by identifying:
\begin{align*}
m=&\mu-\epsilon+\kappa_1'+\kappa_2', \\
\delta=& \epsilon,\\
\ell_1=&\kappa_1+\kappa_1',\\
\ell_2=&\kappa_2+\kappa_2'.
\end{align*}
It remains to show that the combinatorial coefficients agree. For fixed $\delta,m,\ell_1,\ell_2,n_1,n_2$ The coefficient in \eqref{eq::RightSide} is:
\begin{align*}
\frac{1}{{k_1+\ell_1+\ell_2 - \delta \choose \ell_1,\ell_2 } {n \choose n_1,n_2} },
\end{align*}
and the corresponding coefficient in \eqref{eq::LeftSide} is:
\begin{align*}
\sum_{\kappa_1',\kappa_2'} \frac{1}{ {n \choose n_1,n_2} {k_1 +\ell_1+\ell_2-m-\delta \choose \ell_1-\kappa_1',\ell_2-\kappa_2'  } {m \choose \kappa_1',\kappa_2'  } }.
\end{align*}
The claim now follows from Chu--Vandermonde's identity:
\begin{align*}
\sum_{\kappa_1',\kappa_2'} {(k_1+\ell_1+\ell_2-\delta) - m \choose \ell_1-\kappa_1',\ell_2-\kappa_2'  }{m \choose \kappa_1',\kappa_2'} = {k_1+\ell_1+\ell_2-\delta \choose \ell_1,\ell_2}.
\end{align*}
\end{example}

We are now ready to generalise the above example into a proof for Proposition \ref{prop::Cointeraction}.

\begin{proof}
We have already seen that we can match the skeletons of both sides of equation \eqref{eq::CointeractionNonRoot}. Suppose that we have fixed a cut $c$ and a partition $\tau_1,\dots,\tau_n$ of the tree $\tau$, such that the cut does not include any edges internation to a $\tau_i$. Note that $\Delta^-$ will only change the decorations on vertices in each $\tau_i$, the decoration of the vertex the trees are contracted into and the decoration of edges outgoing from that vertex, also note that $\Delta^+$ will only change the decorations of the edges in $c$ and the vertices in the trunk $T^c(\tau)$. Hence we can conclude that the two sides of \eqref{eq::CointeractionNonRoot} will agree in terms of decorations and combinatorial coefficients for all $\tau_i$ that are above the cut, i.e. are in $P^c(\tau)$.\\ 
Next consider the case where a $\tau_i$ is below the cut, but not adjacent to a cut edge. First doing the cut will reduce the decoration of all non-noise vertices in $P^c(\tau)$. If the root of the cut branch has decoration $\delta$, then the vertices in $T^c(\tau)$ are reduced by $\delta_1,\dots,\delta_{m_1}$, all summing to $\delta$ and with coefficient ${{\delta \choose \delta_1,\dots,\delta_m}}^{-1}$. Say that $\delta_1,\dots,\delta_k$ belong to $\tau_i$. Then doing the contraction will again reduce the decoration of all vertices in $\tau_i$, say by $\kappa_1,\dots,\kappa_k$, and the vertex that $\tau_i$ is contracted into will have decoration $\kappa=\kappa_1+\dots+\kappa_k$, with coefficient ${ {\kappa \choose \kappa_1,\dots,\kappa_k} }^{-1}$. First doing the contraction will reduce the decorations of the vertices in $\tau_i$ by $\kappa_1',\dots,\kappa_k'$ and put the decoration $\kappa'=\kappa_1'+\dots+\kappa_k'$ on the contracted vertex. Doing the cut after this will reduce the decoration of the contracted vertex to $\kappa'-\delta'_1$, where $\delta'=\delta'_1+\dots+\delta'_{m_2}$ is the decoration of the root added by the cut. We match up the decorations of the two orders of operations by $\kappa=\kappa'-\delta'_1$, $\kappa'_j=\kappa_j+\delta_j$, $\delta'_1=\delta_1+\dots+\delta_k,\delta'_{1+q}=\delta_{k+q}$. Then we need to show that the corresponding coefficients 
$$
	\sum_{\delta_1,\dots,\delta_k} \frac{1}{{\delta \choose \delta_1,\dots,\delta_{m_1}} {\kappa \choose \kappa_1,\dots,\kappa_k} }
$$ 
and ${{\kappa' \choose \kappa_1',\dots,\kappa_k' } {\delta' \choose \delta_1',\dots,\delta'_{m_2} }}^{-1}$ are equal. This follows by the rewriting 
$$
{\delta \choose \delta_1,\dots,\delta_{m_1}} {\kappa \choose \kappa_1,\dots,\kappa_k} = {\delta' \choose \delta'_1,\dots,\delta'_{m_2}}{\delta'_1 \choose \delta_1,\dots,\delta_k }{\kappa'-\delta'_1 \choose \kappa_1-\delta_1,\dots,\kappa_k-\delta_k }
$$ 
and Chu--Vandermonde's identity.\\
Finally we need to consider the case where we cut an edge that is outgoing from a $\tau_i$. We have already seen that the deformations coming from the root decoration of the cut and from the contracted decoration will match. It remains to check the deformation of the cut edge. Let $n_v$ denote the decoration of the vertex in $\tau_i$ that the edge is outgoing from. First performing the cut will increase the decoration by $\ell$, and have a coefficient of ${{n_v + \ell \choose \ell}}^{-1}$. Then doing the contraction will not deform the edge. Instead doing the contraction first, will increase the decoration by $\kappa$ and will have a coefficient of ${{n_v + \kappa \choose \kappa}}^{-1}$. Performing the cut after the contraction will then increase the decoration by $\kappa'$ and have a coefficient of ${{n_v+\kappa+\kappa' \choose \kappa'}}^{-1}$. We can match the decoration of both cases by identifying $\ell=\kappa+\kappa'$, and the corresponding coefficients agree by Chu--Vandermonde's identity.
\end{proof}

The reason the cointeraction property fails for $\Delta^+: \mathfrak{T} \to \mathfrak{T}^+ \otimes \mathfrak{T}$, where we project onto positive trees, is because it matters if we contract a negative subtree before or after we do the projection. Consider the above Example \ref{ex::Cointeraction}. If we were to first do the cut using $\Delta^+$, we would get the term:
\begin{align*}
\sum_{\delta,\ell_1,\ell_2}  \frac{1}{{k_1+\ell_1+\ell_2 - \delta \choose \ell_1,\ell_2 } } {\color{blue} \Forest{[\delta[k_2,edge label = {node[midway,fill=white,scale=0.5]{$d_1+\ell_1$}}] [k_3,edge label = {node[midway,fill=white,scale=0.5]{$d_2+\ell_2$}} [k_4,edge label = {node[midway,fill=white,scale=0.5]{$d_3$}}] [k_5,edge label = {node[midway,fill=white,scale=0.5]{$\Xi_2$}}] ] ]} } \otimes {\color{blue} \Forest{[k_1-\delta+\ell_1+\ell_2 [k_6,edge label = {node[midway,fill=white,scale=0.5]{$\Xi_1$}}] ]} },
\end{align*}
where we only keep positive trees in the left tensor. The left tree contains the negative subtree:
\begin{align*}
 {\color{blue} \Forest{[k_3[k_4,edge label = {node[midway,fill=white,scale=0.5]{$d_3$}}][k_5,edge label = {node[midway,fill=white,scale=0.5]{$\Xi_2$}}]]}  }.
\end{align*}
Contracting this subtree with $\Delta^-$ will improve the regularity of the tree. This improved regularity means that we could deform by bigger $\ell_1,\ell_2$ before the tree gets to negative regularity. Hence, if we contract before we cut, our sum will contain the extra terms of these bigger $\ell_1,\ell_2$ compared to if we cut before we contract.\\
We follow \cite{BrunedHairerZambotti2019} and solve this using extended decorations. Consider the space $\mathfrak{T}_{ex}$, given by trees where each node has two decorations. One of the node decorations is the usual $\mathbb{N}^d$ decoration. The other node decoration, which we call the \textit{extended decoration}, is in the set of regularities $A$. We now define a grading $|\cdot|_+$ on $\mathfrak{T}_{ex}$, given as the regularity of a tree in the usual sense plus all extended decorations. Let $\mathfrak{T}^+_{ex}$ be the subspace of trees that are positive with respect to $|\cdot|_+$. We now let $\Delta^+_{ex}: \mathfrak{T}_{ex} \to \mathfrak{T}^+_{ex} \otimes \mathfrak{T}$ be the same as $\Delta^+$ except that every node keeps its extended decoration, and we project onto positive trees according to the $|\cdot|_+$-degree. We furthermore let $\Delta^-_{ex} : \mathfrak{T}_{ex} \to \mathfrak{T}^- \otimes \mathfrak{T}_{ex}$ be the same as $\Delta^-$, except that the $|\cdot|_+$-degree of a contracted subtree gets put as the extended decoration on the vertex the tree is contracted into, and we let $\Delta^-_{\rm{ex-non-root}}:\mathfrak{T}^+_{ex} \to \mathfrak{T}^- \otimes \mathfrak{T}^+_{ex}$ be the same as $\Delta^-_{ex}$ except that we do not contract subtrees containing the root.

\begin{proposition}
The cointeraction property
\begin{align*}
(Id \otimes \Delta^+_{ex})\Delta^-_{ex}=m^{1,3}(\Delta^-_{\rm{ex-non-root}} \otimes \Delta^-_{ex})\Delta^+_{ex}
\end{align*}
holds.
\end{proposition}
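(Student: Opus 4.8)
The plan is to deduce the extended identity from the unprojected cointeraction already proved in Proposition~\ref{prop::Cointeraction}, isolating the role of the extended decorations as a separate bookkeeping layer. Relative to that proposition, passing to $\Delta^+_{ex}$, $\Delta^-_{ex}$ and $\Delta^-_{\rm{ex-non-root}}$ amounts to three changes: every node carries an extra label in $A$ that both coproducts transport; $\Delta^-_{ex}$ additionally records, on each contracted vertex, the $|\cdot|_+$-degree of the subtree it replaces; and $\Delta^+_{ex}$ truncates its left tensor factor using $|\cdot|_+$ rather than the ordinary degree $|\cdot|$. Neither of the first two changes touches the $\mathbb{N}^d$- or edge-decorations, so I expect the matching of those decorations and of the Chu--Vandermonde coefficients to be verbatim the argument of Proposition~\ref{prop::Cointeraction}; the real content is the truncation.

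First I would establish the \emph{unprojected} extended identity $(Id \otimes \Delta^+_{0,ex})\Delta^-_{ex} = m^{1,3}(\Delta^-_{\rm{ex-non-root}} \otimes \Delta^-_{ex})\Delta^+_{0,ex}$, where $\Delta^+_{0,ex}$ denotes $\Delta^+_{ex}$ with the $|\cdot|_+$-projection removed. For a fixed cut $c$ and $\mathfrak{T}^-$-admissible partition $\tau_1,\dots,\tau_n$ with $c$ internal to no $\tau_i$, the proof of Proposition~\ref{prop::Cointeraction} already shows that the two orders of cutting and contracting yield the same $\mathbb{N}^d$-decorations with the same coefficients. Because the extended decoration deposited on a contracted vertex is by definition the $|\cdot|_+$-degree of the corresponding $\tau_i$, a quantity determined by its (now matched) ordinary regularity and by the extended labels already inside it, the extended decorations produced in the two orders also agree. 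This upgrades Proposition~\ref{prop::Cointeraction} to the unprojected extended identity with no new combinatorics.

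The decisive step is the compatibility of the $|\cdot|_+$-truncation with contraction. Writing $\pi_+$ for the projection onto trees of strictly positive $|\cdot|_+$-degree, one has $\Delta^+_{ex} = (\pi_+ \otimes Id)\Delta^+_{0,ex}$. The key claim is that $|\cdot|_+$ is a contraction invariant: for every term $\sigma_{(1)} \otimes \sigma_{(2)}$ of $\Delta^-_{ex}(\sigma)$ or of $\Delta^-_{\rm{ex-non-root}}(\sigma)$ one has $|\sigma_{(2)}|_+ = |\sigma|_+$, since replacing a subtree by a single vertex carrying its $|\cdot|_+$-degree, together with the compensating shifts of the deformation (raising a vertex decoration inside the contracted subtree offsets the raised edge typing left on the quotient, and reducing internal decorations offsets the decoration placed on the contraction vertex), leaves the total extended degree unchanged. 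This invariance gives $(Id \otimes \pi_+)\Delta^-_{\rm{ex-non-root}} = \Delta^-_{\rm{ex-non-root}}\,\pi_+$, so that $\pi_+$ may be pushed through the contraction.

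With this in hand the proposition is formal. Projecting the middle tensor factor of the unprojected extended identity with $(Id \otimes \pi_+ \otimes Id)$ turns its left-hand side directly into $(Id \otimes \Delta^+_{ex})\Delta^-_{ex}$, while on the right-hand side the same middle projection can be carried through $\Delta^-_{\rm{ex-non-root}}$ by the invariance above, turning $\Delta^+_{0,ex}$ back into $\Delta^+_{ex}$ and yielding $m^{1,3}(\Delta^-_{\rm{ex-non-root}} \otimes \Delta^-_{ex})\Delta^+_{ex}$. I expect the main obstacle to be exactly the invariance $|\sigma_{(2)}|_+ = |\sigma|_+$ in the configuration flagged before the statement, where the cut edge is outgoing from a contracted subtree: there one must check that the deformation's simultaneous shift of edge typing and vertex decoration, which is what lets one ``deform by bigger $\ell_1,\ell_2$'' after contraction, is precisely absorbed into the extended decoration and so does not move the $|\cdot|_+$-positivity threshold. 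Once that is confirmed the two orders truncate identically and the identity follows.
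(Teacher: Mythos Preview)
Your proposal is correct and follows essentially the same approach as the paper: reduce to the unprojected identity of Proposition~\ref{prop::Cointeraction} and then argue that the $|\cdot|_+$-projection commutes with contraction because $\Delta^-_{ex}$ (and $\Delta^-_{\rm{ex-non-root}}$) preserves the $|\cdot|_+$-degree in the right tensor factor. The paper states this invariance in one sentence and concludes immediately; your version spells out the commutation $(Id \otimes \pi_+)\Delta^-_{\rm{ex-non-root}} = \Delta^-_{\rm{ex-non-root}}\,\pi_+$ and the edge-outgoing-from-subtree check in more detail, but the argument is the same.
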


\begin{proof}
We know that both sides of the equation would agree as infinite sums, and possibly only differ by which terms we keep in the projection done by $\Delta^+_{ex}$. However, every term in the right tensor of $\Delta^-_{ex}$ has the same $|\cdot|_+$-degree as the input tree. Therefore the projection done by $\Delta^+_{ex}$ will truncate the infinite sum at the same deformation parameters regardless if negative subtrees has been contracted or not.
\end{proof}

\bibliographystyle{acm}
\bibliography{RegularityStructuresReferences}
\end{document}